\theoremstyle{plain}
\newtheorem{theorem}{Theorem}[section]
\newtheorem{proposition}[theorem]{Proposition}
\newtheorem{lemma}[theorem]{Lemma}
\newtheorem{corollary}[theorem]{Corollary}
\newtheorem{claim}{Claim}[theorem]
\newtheorem*{theorem*}{Theorem}
\newtheorem{remark}[theorem]{Remark}
\newtheorem{fact}[theorem]{Fact}
\newtheorem{question}[theorem]{Question}
\newtheorem{example}[theorem]{Example}
\newtheorem{illustrative_example}[theorem]{Example}
\newtheorem*{example*}{Example}
\theoremstyle{remark}
\theoremstyle{definition}
\newtheorem{definition}[theorem]{Definition}
\newtheorem{observation}[theorem]{Observation}
\DeclareMathOperator{\Aut}{Aut}
\DeclareMathOperator{\Isom}{Isom}
\DeclareMathOperator{\Sp}{Span}
\DeclareMathOperator{\id}{Id}
\DeclareMathOperator{\st}{stable}
\DeclareMathOperator{\diag}{diag}
\DeclareMathOperator{\Pos}{Pos}
\DeclareMathOperator{\diam}{diam}
\DeclareMathOperator{\Stab}{Stab}
\DeclareMathOperator{\Haus}{Haus}
\DeclareMathOperator{\BS}{BS}
\DeclareMathOperator{\Dell}{\mathcal{D}_\ell}
\DeclareMathOperator{\Cell}{\mathcal{C}_\ell}
\DeclareMathOperator{\Bell}{\mathcal{B}_\ell}
\DeclareMathOperator{\hil}{d_\Omega}
\DeclareMathOperator{\evmax}{\lambda_{max}}
\DeclareMathOperator{\evmin}{\lambda_{min}}
\DeclareMathOperator{\prim}{Prim}
\DeclareMathOperator{\orb}{orb}
\DeclareMathOperator{\partiali}{\partial_{\rm i}}
\DeclareMathOperator{\convcore}{\Cc^{c}_{\Omega}(\Gamma)}
\DeclareMathOperator{\smax}{\mathcal{S}_\Gamma}
\DeclareMathOperator{\pid}{\rho}
\DeclareMathOperator{\orblimset}{\Lc^{\orb}_{\Omega}}
\newcommand{\rkonelimset}{\Lambda^{G/Q}_{\Gamma}}
\newcommand{\Om}{\Omega}
\newcommand{\bdry}{\partial \Om}
\newcommand{\clOm}{\overline{\Om}}
\newcommand{\G}{\Gamma}
\newcommand{\g}{\gamma}
\newcommand{\RP}{\mathbb{P}(\mathbb{R}^{d+1})}
\newcommand{\Amin}{\mathcal{A}_{\min}(x_0)}
\DeclareMathOperator{\PGL}{PGL_{d+1}(\mathbb{R})}
\DeclareMathOperator{\GL}{GL_{d+1}(\mathbb{R})}
\DeclareMathOperator{\SL}{SL}
\DeclareMathOperator{\SO}{SO}
\newcommand{\PS}{\mathcal{P}\mathcal{S}}
\DeclareMathOperator{\CH}{ConvHull}
\DeclareMathOperator{\dist}{d}
\DeclareMathOperator{\CAT}{CAT}
\DeclareMathOperator{\BF}{BF}
\newcommand{\e}{\varepsilon}
\newcommand{\ev}{\lambda}
\newcommand{\T}{\widetilde}
\newcommand{\wt}{\widetilde}
\newcommand{\Rb}{\mathbb{R}}
\newcommand{\Zb}{\mathbb{Z}}
\newcommand{\Pb}{\mathbb{P}}
\newcommand{\Ab}{\mathbb{A}}
\newcommand{\Nb}{\mathbb{N}}
\newcommand{\Cb}{\mathbb{C}}
\newcommand{\Hb}{\mathbb{H}}
\newcommand{\Xb}{\mathbb{X}}
\newcommand{\Cc}{\mathcal{C}}
\newcommand{\Ac}{\mathcal{A}}
\newcommand{\Nc}{\mathcal{N}}
\newcommand{\Lc}{\mathcal{L}}
\newcommand{\Uc}{\mathcal{U}}
\newcommand{\Bc}{\mathcal{B}}
\newcommand{\Ic}{\mathcal{I}}
\newcommand{\Kc}{\mathcal{K}}
\newcommand{\Hc}{\mathcal{H}}
\begin{document}

\title[Rank One Hilbert Geometries]{Rank One Hilbert Geometries}
\author{Mitul Islam}
\address{Department of Mathematics, University of Michigan, MI 48109,  \newline
Current address: Max Planck Institute for Mathematics in the Sciences, 04103 Leipzig, Germany}
\email{mitul.islam@gmail.com}
\date{\today}

\begin{abstract}
We develop a notion of rank one properly convex domains (or Hilbert geometries) in the real projective space.  This is in the spirit of rank one non-positively curved Riemannian manifolds and CAT(0) spaces. We define rank one isometries for Hilbert geometries and characterize them as being equivalent to contracting elements (in the sense of geometric group theory). We prove that if a discrete subgroup of automorphisms of a Hilbert geometry contains a rank one isometry, then the subgroup is either virtually cyclic or acylindrically hyperbolic. This leads to several applications like infinite-dimensionality of the space of quasi-morphisms, counting results for conjugacy classes and genericity results for rank one isometries.
\end{abstract}

\maketitle

\tableofcontents

\section{Introduction}

A \emph{properly convex domain} in $\RP$ is an open subset $\Omega \subset \RP$ such that $\clOm$ is a  bounded convex domain in an affine chart.  Any such domain $\Om$ carries a canonical distance function $\hil$, called the \emph{Hilbert metric} on $\Om$, defined using projective cross-ratios (see \cref{sec:hil-geom}).  Then $\Omega$ equipped with its Hilbert metric constitutes a  \emph{Hilbert geometry}.   A motivating example is given by the open projective disk $\Omega_2:=\{[x:y:1] \in \Pb(\Rb^3) | x^2+y^2<1\}$, a properly convex domain in $\Pb(\Rb^3)$.  In fact, $(\Omega_2,{\rm d}_{\Omega_2})$ is the projective model of the 2-dimensional real hyperbolic space $\Hb^2$.

For a properly convex domain $\Omega$,  the group $\Aut(\Om):=\{ g \in \PGL : g \Om = \Om\}$ acts properly and isometrically on $(\Omega,\hil)$.  If $\Gamma \leq \Aut(\Omega)$ is a discrete subgroup, then the quotient space $\Omega/\Gamma$ is `locally modeled' on $(\Omega,\hil)$.  These are the main objects that we study in this paper. We make the following definition.

\begin{definition}\label{defn:hil-geom}
We say that $\Omega$ is a \emph{Hilbert geometry} if $\Omega \subset \RP$ is a properly convex domain.  Further, we say that a pair $(\Omega,\Gamma)$ is a \emph{Hilbert geometry} if $\Omega \subset \RP$ is a properly convex domain and $\Gamma \leq \Aut(\Omega)$ is a discrete subgroup.  A Hilbert geometry $(\Omega,\Gamma)$ is \emph{divisible} if $\G \leq \Aut(\Om)$ acts co-compactly on $\Om$.  
\end{definition}
\begin{example*}
Consider the projective model $\Omega_2$ of $\Hb^2$.  Here $\Aut(\Omega_2)={\rm PO}(2,1)$.   If $\Gamma \leq {\rm PO}(2,1)$ is any discrete subgroup,  then $(\Omega_2,\Gamma)$ is a Hilbert geometry and $(\Omega_2,\Gamma)$ is divisible when $\Gamma$ is a uniform lattice.
\end{example*}

The boundary of a Hilbert geometry $\Om$, denoted by $\bdry$, is the topological boundary of $\Om$ as a subset of $\RP$. The regularity of $\bdry$ strongly influences the geometric properties of $(\Om,\hil)$. For instance, consider the class of \emph{strictly convex} Hilbert geometries, i.e. Hilbert geometries $\Om$ such that $\bdry$ does not contain any non-trivial projective line segments. Benoist showed that strictly convex divisible Hilbert geometries $(\Om,\G)$ have $C^1$ boundaries and behave like compact Riemannian manifolds of negative curvature (more precisely, $\G$ is Gromov hyperbolic and the geodesic flow is Anosov) \cite{benoist_cd1}. This analogy between strictly convex Hilbert geometries and Riemannian negative curvature was subsequently studied by many authors with much success (see \cite{benoist_cd_survey} or \cite{marquis_survey} for a survey). 

On the other hand, the \emph{non-strictly convex} Hilbert geometries (i.e. when $\bdry$ contains non-trivial projective line segments) have remained elusive. There are only a few examples (see Section \ref{sec:eg-div-hil-geom}) and, until recently, only a limited number of results. Taking a cue from the strictly convex case, one hopes to liken non-strictly convex Hilbert geometries to Riemannian non-positive curvature, or more generally, $\CAT(0)$ spaces. This will be our guiding principle in this paper. But we remark that the similarity with CAT(0) geometry is superficial. In fact, an old theorem of Kelly-Strauss \cite{KS1958} states that $\Om$ is $\CAT(0)$ if and only if $\Om$ is the projective model of the real hyperbolic space.  Thus, one needs to use very different tools and techniques for working with Hilbert geometries as compared to CAT(0) spaces.

Our target in this paper is to classify Hilbert geometries into two broad classes: `rank one' and `higher rank'. The motivation for this classification comes from the success of the rank rigidity theorem for non-positively curved Riemannian manifolds  \cite{WB1985, burns_spatzier_rank_rigid}. Roughly, this theorem states that there is a dichotomy for irreducible compact Riemannian manifolds of non-positive curvature: either the manifold is `rank one', or it is a higher rank Riemannian locally  symmetric space. Similar rank rigidity theorems have been proven in other `non-positive curvature' settings \cite{SAGEEV_CAPRACE_2011, Ricks_2019} and conjectured for CAT(0) spaces. We remark  that the usual definition of rank for Riemannian manifolds uses  Jacobi fields and will not be useful for Hilbert geometries. This is because the geodesic flow on a generic non-strictly convex Hilbert geometry is only $C^0$.

We introduce a notion of rank one geodesics in $(\Om,\hil)$ using projective geometry. Consider an open projective line segment $(a,b) \subset \Omega$ with $a,b \in \bdry$. Then $(a,b)$ is a bi-infinite geodesic for the Hilbert metric $\hil$. We will say that $(a,b)$ is a \emph{rank one geodesic} provided it is not contained in a \emph{half triangle in $\Om$}, i.e. either $(a,c) \subset \Om$ or $(c,b) \subset \Om$ for any $c \in \bdry$ (see \cref{fig:rank_one} and Definitions \ref{defn:half-T} and \ref{defn:rank_one_geodesic}). The notion of a half triangle in Hilbert geometry is analogous to the notion of a half flat in $\CAT(0)$ geometry \cite[Section III.3]{ballmann_book}. Our above definition of a rank one geodesic is motivated by an analogous characterization of rank one geodesics in CAT(0) geometry. In a CAT(0) geodesic metric space, a rank one geodesic does not bound a half flat.

\begin{figure}
\centering
\begin{minipage}{0.45\textwidth}
\includegraphics[scale=0.45]{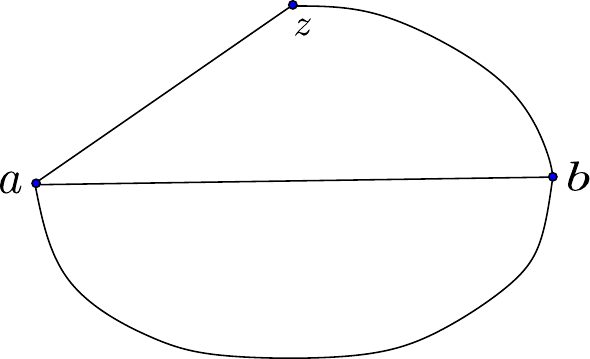}
\end{minipage}
\begin{minipage}{0.45\textwidth}
\includegraphics[scale=0.45]{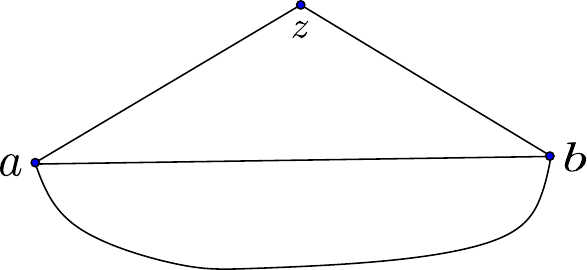}
\end{minipage}

\caption{In the left figure, $(a,b)$ is a \emph{rank one geodesic} while in the right figure, $(a,b)$ is contained in a \emph{half triangle in $\Om$}. However, \cref{prop:rank-one-properties} will show that neither of these can be a closed rank one geodesic (i.e. a  \emph{rank one axis}) in $\Omega/\G$. }
\label{fig:rank_one}
\end{figure}

We will say that an isometry $\g \in \Aut(\Om)$ is a \emph{rank one isometry} if $\g$ acts by  a translation along a rank one geodesic $\ell \subset \Om$, see  \cref{defn:rank-one-isometry}. We remark that acting by a translation along a rank one geodesic (i.e. having a \emph{rank one axis}) is much more special than simply translating along any projective geodesic (i.e having an \emph{axis}), see \cref{rem:rank_one_axis_vs_axis}. Our definition of rank one isometry is again analogous to a characterization of rank one isometries in CAT(0) geometry \cite{ballmann_axial_isometries, ballmann_orbihedra}. A rank one isometry $\g \in \Aut(\Om)$ has several properties reminiscent of hyperbolic isometries in $\Isom(\Hb^2)$: $\g$ is biproximal, has exactly two fixed points $\g^{\pm}$ in $\clOm$,  has a unique axis $(\g^+,\g^-) \subset \Om$, both fixed points are `visible' (i.e. $(\g^+,z) \cup (z,\g^-)$ for any $z \in \bdry-\{\g^+,\g^-\}$), and $\g$ has the so-called north-south dynamics on $\bdry$ (see \cref{prop:rank-one-properties} and \cref{cor:ns_dynamics}). In the case where $(\Om,\G)$ is divisible, it is quite easy to detect a rank one isometry: if $\g \in \Aut(\Om)$ has an axis and is biproximal, then $\g$ is a rank one isometry (see Proposition \ref{prop:biprox-equiv-no-half-T}).

We further the analogy between rank one isometries in Hilbert geometry and CAT(0) geometry by proving that rank one isometries (in our sense above) are contracting elements in the sense of Sisto \cite{sisto_contracting_rw}. Sisto introduced the notion of contracting elements to capture the essence of ``negative curvature'' in groups (see Section \ref{sec:defn-contracting-element}). He proved that if $\Lambda$ acts properly by isometries on a proper $\CAT(0)$ space $X$, then an element of $\Lambda$ is contracting if and only if it is rank one (in the sense of of CAT(0) geometry) \cite[Proposition 3.14]{sisto_contracting_rw}. Our first main result in the paper is an analogue of this result for Hilbert geometries. If $\Om$ is a Hilbert geometry, let $\PS^\Om:=\{ [x,y]  : x, y \in \Om\}$ where $[x,y]$ is a projective line segment joining $x$ and $y$. 

\begin{theorem}
\label{thm:contracting-iff-rank-one}
(see Part \ref{part:contracting_element_hil_geom})
If $\Om$ is a Hilbert geometry, then $\g \in \Aut(\Om)$ is a contracting element for $(\Om,\PS^\Om)$ if and only if $\g$ is a rank one isometry.
\end{theorem}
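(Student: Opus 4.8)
The plan is to prove the two implications separately, organized around the principle that half-triangles are the Hilbert-geometry analogue of flat half-planes, and that the ``no half-triangle'' condition is exactly what forces the coarse projection onto the axis of $\g$ to have uniformly bounded diameter. Throughout I fix a basepoint $x_0 \in \Om$ and compare the quasi-geodesic orbit $\{\g^n x_0\}_{n \in \Zb}$ against the axes of $\g$. Since $\g$ translates along each axis, the orbit and any axis $\alpha$ fellow-travel, and all axes of $\g$ share the same endpoints in $\clOm$ and hence fellow-travel one another; because the contracting property is invariant under bounded Hausdorff distance, $\g$ is contracting for $(\Om,\PS^\Om)$ if and only if some (equivalently, every) axis $\alpha$ is a contracting set for $\PS^\Om$. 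This reduces the theorem to a statement about the geometry of an axis inside $\Om$.

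For the implication \emph{contracting $\Rightarrow$ rank-one}, I first argue that a contracting element has an axis at all. The contraction hypothesis makes $\{\g^n x_0\}$ a Morse quasi-geodesic, so its two endpoints $\g^{\pm}\in \clOm$ are well defined; using properness of the $\Aut(\Om)$-action together with compactness of $\clOm$ and of the space of supporting hyperplanes, I realize $\g^{\pm}$ as extreme points with unique supporting hyperplanes, i.e. show $\g$ is positively biproximal, whence the projective segment joining $\g^+$ and $\g^-$ is an axis. It remains to rule out that an axis lies in a half-triangle, and this is the easy half: if an axis $\alpha$ were contained in a half-triangle, the ambient (half) simplex is a flat on which $\hil$ restricts to a normed plane, carrying a one-parameter family of geodesics parallel to $\alpha$. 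Points arbitrarily far from $\alpha$ along these parallels have coarse $\PS^\Om$-projection onto arbitrarily long subsegments of $\alpha$, so $\alpha$ is not contracting, contradicting the hypothesis. Hence no axis lies in a half-triangle and $\g$ is rank-one.

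The reverse implication \emph{rank-one $\Rightarrow$ contracting} is the substantive direction and proceeds by contradiction, mirroring the limiting arguments of Ballmann and Sisto in the $\CAT(0)$ setting. Assume $\g$ is rank-one but some axis $\alpha$ fails to be contracting: there is a sequence of segments $[p_n,q_n]\in\PS^\Om$ whose coarse projections to $\alpha$ have diameter tending to infinity while remaining within bounded distance of $\alpha$. After applying suitable powers of $\g$ to recenter each long projection near $x_0$, I run an Arzel\`a--Ascoli/compactness argument — again using properness of the action and compactness of $\clOm$ and of the space of supporting hyperplanes — to extract a limiting configuration: a biinfinite geodesic parallel to $\alpha$ together with the transversal segments realizing the long projections. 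The key step is to identify the convex hull of this limiting data as a properly embedded half-triangle containing an axis of $\g$, invoking the projective-convexity results of Benoist (that properly embedded simplices are the flats of a Hilbert geometry) in place of $\CAT(0)$ convexity, which is unavailable here. This contradicts the rank-one hypothesis and completes the proof.

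I expect the main obstacle to be this final identification in the limiting argument. In $\CAT(0)$ geometry the flat half-plane emerges cleanly from convexity of the distance function and uniqueness of geodesics; in a Hilbert geometry geodesics are highly non-unique and $\hil$ is only Finsler, so the limiting ``flat'' must be reconstructed from projective data — supporting hyperplanes, extreme points, and the segments they span in $\bdry$ — and then certified to be a genuine two-dimensional half-triangle rather than a degenerate strip, with $\g$ acting on it by translation along a contained axis. Ensuring that the limit of the non-contracting transversals genuinely spans a simplex (so that the ``no half-triangle'' condition is actually violated) is the crux, and is precisely where the specifically projective features of $\Om$, as opposed to generic $\CAT(0)$ techniques, must be exploited.
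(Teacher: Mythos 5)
There is a genuine gap, and it sits exactly where the paper has to work hardest: the existence of an axis for a contracting element. Your chain of deductions in that direction is ``Morse orbit $\Rightarrow$ positively biproximal $\Rightarrow$ the projective segment joining $\g^+$ and $\g^-$ is an axis.'' The first arrow is asserted without an argument (properness plus ``compactness of the space of supporting hyperplanes'' is not a proof, and uniqueness of supporting hyperplanes at the endpoints is neither equivalent to nor implied by biproximality). Worse, the second arrow is simply false: positive biproximality does not force the principal pseudo-axis to meet $\Om$. The element $g_1=\diag(\ev_1,\ev_2,\ev_3)$ of $\Aut(T_2)$ in Example A.1 is loxodromic, hence positively biproximal, yet its principal pseudo-axis $[e_1,e_3]$ lies entirely in $\partial T_2$ and $g_1$ has no axis at all; Observation \ref{obs:axis} only ever produces a pseudo-axis in $\clOm$. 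The real content here is Lemma \ref{lem:axis-of-contracting-elements}(1) of the paper: one must show that the segment joining a forward orbit limit $p\in E_\g^+$ to a backward orbit limit $q\in E_\g^-$ lies in $\Om$ rather than in $\bdry$, and this uses the Morse property (Proposition \ref{sisto-morse}) to trap the geodesics $[\g^{-m_k}x_0,\g^{n_k}x_0]$ near the orbit and then the face dynamics of Proposition \ref{no-face-shared-by-E+-E-} to rule out $[p,q]\subset\bdry$. Nothing in your proposal substitutes for this step, and note that the paper obtains biproximality as a \emph{consequence} of rank-one (Proposition \ref{prop:rank-one-properties}(1)), not as an intermediate input; your subsequent half-triangle exclusion cannot rescue the argument, since without this step there is no axis to exclude anything from.

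Two further claims you rely on are false as stated, though less damaging. First, ``all axes of $\g$ share the same endpoints'': in Example A.2, $g_2=\diag(\ev_1,\ev_2,\ev_2)$ on $T_2$ has uncountably many axes with pairwise distinct endpoints; your reduction survives only because every axis lies within bounded Hausdorff distance of the orbit $\langle \g\rangle x_0$, not because endpoints agree. Second, ``the half-triangle is a flat on which $\hil$ restricts to a normed plane'': a half triangle has only \emph{two} sides in $\bdry$, the third side $(x,y)$ being interior to $\Om$, so chords of its hull exiting through $(x,y)$ see boundary points of $\Om$ beyond the hull, and the restricted metric is not the triangle's own (flat) Hilbert metric; the normed-plane projection picture does not apply, and the paper instead proves ``axis in a half triangle $\Rightarrow$ not contracting'' by the projection-and-face argument of Lemma \ref{lem:axis-of-contracting-elements}(2). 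Finally, for the direction rank-one $\Rightarrow$ contracting, your limiting scheme is the right idea and close in spirit to the paper's (Theorem \ref{thm:Dl-thin} plus Lemma \ref{lem:rank1-axis-contracting}), but the crux you flag resolves more cheaply than you expect: no appeal to Benoist's properly embedded simplices is needed, because producing a half triangle only requires showing that the two limiting segments lie in $\bdry$, which follows from distance blow-up along the degenerating triangles together with Proposition \ref{line-in-bdry-0} and the triviality of faces at rank-one endpoints (Proposition \ref{prop:rank-one-properties}(4)) --- no two-dimensional flat has to be certified.
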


In the light of these analogies, one naturally expects that the presence of many rank one isometries would induce interesting ``negative curvature"-like behavior. To formalize this, we now introduce the notion of rank one Hilbert geometries. An example to keep in mind is $(\Omega_2,\G)$ where $\Om_2 \subset \Pb(\Rb^3)$ is the projective model of $\Hb^2$ and $\G \leq {\rm PO}(2,1)$ is an infinite discrete subgroup.
\begin{definition}\label{defn:rank-one-hil-geom}
A \emph{rank one Hilbert geometry} is a pair $(\Om,\G)$ where $\Om \subset \RP$ is a Hilbert geometry and $\G$ is a discrete subgroup of $\Aut(\Om)$ that contains a rank one isometry. 
\end{definition}
Morally, if a rank one group $\G$ as in \cref{defn:rank-one-hil-geom} is not virtually cyclic (i.e. does not contain a finite index cyclic subgroup), then it contains many rank one isometries and we expect the group $\G$ to appear quite `hyperbolic'.  But of course we cannot expect such a group $\G$ to always be Gromov hyperbolic  -- there are many examples to the contrary, see \cref{sec:eg-div-hil-geom}. The main result of this paper is to identify the notion of hyperbolicity that rank one groups satisfy. We prove that a rank one group is either virtually cyclic or an acylindrically hyperbolic group, see \cref{thm:acy-hyp} below.

The notion of acylindrically hyperbolic groups, introduced by Osin in \cite{osin_acy_hyp}, is a generalization of the notion of non-elementary  Gromov hyperbolic groups. Roughly speaking, a group is acylindrically hyperbolic if it admits a non-elementary  action on a (possibly non-proper) Gromov hyperbolic metric space with all but finitely many elements acting `hyperbolically' (see \cref{defn-acy-hyp-1}). This family includes many important classes of groups: mapping class groups of most finite-type surfaces, rank one $\CAT(0)$ groups that are not virtually abelian, relatively hyperbolic groups that are not virtually cyclic and have proper peripheral subgroups, and outer automorphism groups of free groups on at least two generators \cite[Appendix]{osin_acy_hyp}.  We prove the following.

\begin{theorem}
\label{thm:acy-hyp}
(see Section \ref{sec:acy_hyp})
If $(\Om, \G)$ is a rank one Hilbert geometry, then either $\G$ is virtually cyclic or $\G$ is an acylindrically hyperbolic group.
\end{theorem}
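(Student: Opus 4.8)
The plan is to deduce acylindrical hyperbolicity from the existence of a contracting element, using the machinery already available in the literature. By Theorem \ref{thm:contracting-iff-rank-one}, the hypothesis that $(\Om,\G)$ is rank-one means $\G$ contains a rank-one isometry $\g$, which is precisely a contracting element for the action of $\G$ on the metric space $(\Om,\hil)$ equipped with the system of projective segments $\PS^\Om$. The action of $\G$ on $(\Om,\hil)$ is proper and isometric (this is recorded in the Introduction for the full group $\Aut(\Om)$, hence holds for the discrete subgroup $\G$), and $(\Om,\hil)$ is a proper geodesic metric space. Thus I would invoke a general criterion that a proper isometric action of a group on a geodesic space with a contracting element forces the group to be either virtually cyclic or acylindrically hyperbolic.

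The most direct route is to cite the work of Sisto \cite{sisto_contracting_rw} together with the foundational results of Osin \cite{osin_acy_hyp}. Concretely, Sisto shows that an element acting with a contracting (``Morse'') quasi-geodesic axis under a proper action is a \emph{generalized loxodromic} element, meaning it acts loxodromically on some hyperbolic space on which the group acts acylindrically. Osin's characterization \cite{osin_acy_hyp} then states that a group admitting such a generalized loxodromic element is acylindrically hyperbolic, unless the element is contained in a virtually cyclic subgroup that is the whole group up to finite index. Therefore the first step is to translate the contracting condition for $(\Om,\PS^\Om)$ into the standard contracting/Morse condition for the geodesic metric space $(\Om,\hil)$: one must verify that the projective-segment quasi-geodesic through which $\g$ is contracting is quasi-geodesic for $\hil$ and that the contracting property survives. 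Since projective segments \emph{are} geodesics for the Hilbert metric, this translation should be essentially definitional, but it requires care in matching the definition of contracting used in Section \ref{sec:defn-contracting-element} to the hypotheses of the cited theorems.

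The second step is to handle the dichotomy. Following Osin's framework, an action with a contracting element is either elementary (the group is virtually cyclic, generated up to finite index by the powers of $\g$) or the group contains two independent contracting elements and is genuinely acylindrically hyperbolic. The cleanest way to organize this is: if the maximal virtually cyclic subgroup containing $\g$ (its commensurator, equivalently the stabilizer of its pair of endpoints in $\bdry$) has finite index in $\G$, then $\G$ is virtually cyclic; otherwise, a standard ping-pong/North-South-dynamics argument on the boundary $\bdry$ produces a conjugate of $\g$ independent from $\g$, and the presence of two independent contracting elements yields acylindrical hyperbolicity via \cite[Theorem 1.2]{osin_acy_hyp}.

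I expect the main obstacle to be \emph{not} the group-theoretic dichotomy, which is standard once a contracting element is in hand, but rather the faithful transfer of the ``contracting for $(\Om,\PS^\Om)$'' hypothesis into a hypothesis that the cited acylindricity criteria can consume. The subtlety is that $(\Om,\hil)$ is only a Finsler space, not $\CAT(0)$ (by the Kelly--Strauss theorem cited above, $\Om$ is $\CAT(0)$ only when $\Om=\Hb^n$), so one cannot directly quote the $\CAT(0)$ version of Sisto's result. I would therefore phrase the argument at the level of the general theory of contracting elements in geodesic metric spaces — checking properness, the quasi-geodesic axis, and the bounded-projection (contracting) property — and only then apply the metric-space versions of the generalized-loxodromic and acylindricity results. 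A secondary point requiring attention is confirming that the contracting geodesic for $\g$ is actually an \emph{axis} invariant under $\langle \g \rangle$ and that $\g$ acts on it by translation, so that it is genuinely loxodromic rather than parabolic; this is guaranteed by $\g$ having an axis in the sense of Definition \ref{defn:axis}, which is built into the definition of rank-one isometry.
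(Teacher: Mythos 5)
Your proposal is correct and follows essentially the same route as the paper: Theorem \ref{thm:contracting-iff-rank-one} converts the rank-one isometry into a contracting element for $(\Om,\PS^\Om)$, and the Sisto--Osin machinery then yields the dichotomy, exactly as in the paper's Theorem \ref{thm:contracting-equiv-acy-hyp}. Note that the paper's packaging of this second step — Sisto's Theorem 4.7, which says $E(g)$ is an infinite, virtually cyclic, hyperbolically embedded subgroup containing $\langle g\rangle$, combined with Osin's criterion that a proper infinite hyperbolically embedded subgroup implies acylindrical hyperbolicity — makes both your proposed ping-pong argument for the dichotomy and your worry about translating out of a $\CAT(0)$ setting unnecessary, since Sisto's results are stated directly for path systems on geodesic metric spaces and $\PS^\Om$ is such a system.
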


The acylindrical hyperbolicity of rank one Hilbert geometries $(\Omega,\G)$ have several applications. We defer this discussion until the subsection \hyperlink{applications}{\bf Applications} below. Instead we first mention some of the precursors to our above result. In all those previous results, the conclusion is that the group under consideration is either Gromov hyperbolic or relatively hyperbolic - both of which are acylindrically hyperbolic groups (see the previous paragraph).  Benoist showed that if $(\Om,\G)$ is divisible and $\partial \Om$ is strictly convex, then $\G$ is Gromov hyperbolic \cite{benoist_cd1}. If instead $\Om/\G$ is non-compact but has finite volume, then Cooper-Long-Tillmann showed that $\G$ is relatively hyperbolic (with respect to the cusp subgroups) \cite[Theorem 0.15]{cooper_long}. More generally, if $\Om/\G$ is geometrically finite, then Crampon-Marquis proved that $\G$ is relatively hyperbolic \cite[Theorem 1.8]{CM2014}. However they require that $\bdry$ is $C^1$  and not just strictly convex. 

Outside the strictly convex setting, \cite{IZ2019} has recently shown that if $\G$ acts co-compactly on $\Om$, then $\G$ is relatively hyperbolic (with peripheral subgroups free abelian of rank at least two)  if and only if the set of properly embedded simplices in $\Om$ of dimension at least two (cf. \ref{subsec:simplex}) forms an `isolated family'. The results in \cite{IZ2019} hold in greater generality -- whenever $\G$ acts convex co-compactly on $\Om$, see \cref{defn:cc} and \cref{sec:cc-rel-hyp} for further discussion.  We can interpret the above \cref{thm:acy-hyp} as a generalization of these aforementioned results in the general setup of (possibly non-strictly convex) Hilbert geometries. \cref{thm:acy-hyp} characterizes the existence of rank one isometries in $\G$ as a key factor that underpins the presence of these various weak forms of hyperbolicity for the group $\G$.

\subsection*{Zariski density and rank one} Before moving on to contrasting rank one against `higher rank' Hilbert geometries, we indulge in a short discussion about Hilbert geometries  $(\Om,\G)$ where $\G \leq \Aut(\Omega)$ is Zariski dense in $\SL_{d+1}(\Rb)$, i.e. `large' in an algebraic sense. For such groups $\G$, one can define a notion of proximal limit set $\rkonelimset \subset \RP$ (see \ref{defn:proximal_limset} and \ref{rem:proximal_limset_ZD}) that is independent of the properly convex domain $\Om$.  In \cref{sec:rank-one-hil-geom}, we prove that: if $x,y \in \rkonelimset \cap \bdry$ such that $(x,y) \subset \Om$ is a rank one geodesic, then the set of rank one isometries in $\G$ form a Zariski dense set in $\SL_{d+1}(\Rb)$ and $(x,y)$ can be approximated by the axes of rank one isometries (\cref{lem:limit-set-rank-one-hil-geom}). In particular, a Hilbert geometry $(\Om,\G)$, with $\G$ Zariski dense in $\SL_{d+1}(\Rb)$,  is rank one if and only if $\Omega$ contains a rank one geodesic $(x,y)$ with $x,y \in \rkonelimset \cap \bdry$, see \hyperlink{ans:answer_to_ques_in_sec_8}{Answer to Question 8.1}.

\subsection*{Rank one versus higher rank}
The class of rank one Hilbert geometries is quite rich. Besides the strictly convex Hilbert geometries, there are several examples of non-strictly convex divisible Hilbert geometries which are rank one, e.g. the 3-manifold groups constructed in \cite{benoist_cd4} from projective reflection groups. For more examples, see Section \ref{sec:eg-div-hil-geom}. In Appendix \ref{sec:eg-rank-one}, we discuss this further and also generalize the notion of rank one for convex co-compact actions. 

On the other hand, there are several examples of Hilbert geometries that are not rank one, or in other words, have `higher rank'. Projective simplices of dimension at least two and  symmetric domains of real rank at least two are the key examples of `higher rank' Hilbert geometries (see \cref{subsec:simplex} and \ref{sec:eg-div-hil-geom}). The former are examples of reducible `higher rank' while the latter are examples of irreducible `higher rank' domains (see  \cref{defn:irreducible}). At this point, it is natural to ask whether these are all the `higher rank' divisible Hilbert geometries, akin to the case of Riemannian non-positive curvature. Recently, A. Zimmer \cite{Z2019} has proven that this is indeed the case. 

We will now briefly discuss Zimmer's result for context. Zimmer calls $\Om$ a \emph{higher rank Hilbert geometry} if any $(p,q) \subset \Omega$ is contained in a  properly embedded projective simplex $S$ in $\Om$ of dimension at least two (cf. \ref{subsec:simplex}). Under some assumptions, he proves that his notion of higher rank is exactly complementary to our notion of rank one. We remark that Zimmer does not  develop a theory of rank one geometries. He focuses only on higher rank geometries and  proves that an irreducible divisible Hilbert geometry $(\Om,\G)$ is  higher rank (in his sense) if and only if it does not satisfy the notion of rank one (in the sense introduced in this paper). 

\begin{theorem}[{Part of \cite[Theorem 1.4]{Z2019}}]
\label{thm:rank_rigidity_detailed}
Suppose $(\Om,\G)$ is  a divisible Hilbert geometry and $\Omega$ is irreducible. Then the following are equivalent:
\begin{enumerate}
\item $\Om$ has higher rank (in the sense of Zimmer \cite[Definition 1.1]{Z2019}).
\item $\Om$ is a symmetric domain of real rank at least two.
\item  $\Om$ does not contain any rank one geodesics (in the sense of this paper, \cref{defn:rank_one_geodesic}).
\item $\G$ does not contain any rank one isometries (in the sense of this paper, \cref{defn:rank-one-isometry}).
\end{enumerate}
\end{theorem}

\hypertarget{applications}{\subsection*{Applications}}

We now return to our discussion about rank one geometries. There is a sizable literature exploring different properties of acylindrically hyperbolic groups. By virtue of \cref{thm:acy-hyp}, we can use these to establish several interesting results about rank one Hilbert geometries. We remark that in the ensuing discussion, we usually do not require the additional assumption of divisibility.

\subsection{Second bounded cohomology and quasi-morphisms.} 
A quasi-morphism of a group $G$ is a function $f: G \to \Rb$ such that $\sup_{g,h \in G}|f(gh)-f(g)-f(h)|$ is finite. We say that two quasi-morphisms are equivalent if they differ by a bounded function or a homomorphism of $G$ into $\Rb$. The set of all equivalence classes of quasi-morphisms of $G$ constitute $\T{QH}(G)$, which is a $\Rb$-vector space. More generally, if $\rho:G \to \Uc(E)$ is a unitary representation of $G$ on a complete normed $\Rb$-vector space $(E,|| \cdot||)$, then we can define $\T{QC}(G;\rho)$ (see Section \ref{sec:second_bdd_cohom}).

Bestvina-Fujiwara proved in \cite{bestvina_fujiwara_symm_space} that if $M$ is a compact non-positively curved Riemannian manifold, then - under some mild assumptions - $\T{QH}(\pi_1(M))$ is infinite dimensional if and only if $M$ is a rank one Riemannian manifold. We prove a similar cohomological characterization of rank one Hilbert geometries.

\begin{theorem}(see Section \ref{sec:second_bdd_cohom}) \label{thm:rk-1-hil-infinite-QH}
If $(\Omega,\G)$ is a rank one Hilbert geometry, $\G$ is torsion-free and $\G$ is not virtually cyclic, then
\begin{enumerate}
\item $\dim \big( \T{QH} (\G) \big) = \infty$, and 
\item if $p \in(1, \infty)$ and $\rho^p_{\rm reg}: \G \to \Uc \big( \ell^p(\G)\big)$ is the regular representation, then $\dim \big( \T{QC}(\G;\rho^p_{\rm reg}) \big) =\infty$. 
\end{enumerate}
\end{theorem}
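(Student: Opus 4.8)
The plan is to reduce both statements to the acylindrical hyperbolicity of $\G$ and then feed the relevant data into the bounded-cohomology machinery for such groups. Since $\G$ is assumed not to be virtually cyclic, Theorem~\ref{thm:acy-hyp} places us in its second alternative, so $\G$ is an acylindrically hyperbolic group; at this point the Hilbert-geometric content of the hypotheses has been fully absorbed into this single structural fact, and what remains are two general assertions about (torsion-free) acylindrically hyperbolic groups. Recalling the standard identifications, $\T{QH}(\G)$ is canonically the kernel of the comparison map $H^2_b(\G;\Rb) \to H^2(\G;\Rb)$, while $\T{QC}(\G;\rho^p_{\rm reg})$ is the reduced second bounded cohomology $H^2_b(\G;\ell^p(\G))$ with the coefficient module given by $\rho^p_{\rm reg}$. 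Thus both conclusions amount to the infinite-dimensionality of a reduced second bounded cohomology group with suitable coefficients.

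Next I would set up the input required by \cite{BBF13}, whose key object is a well-behaved loxodromic element. A rank-one isometry $\g_0 \in \G$ exists by hypothesis, and by Theorem~\ref{thm:contracting-iff-rank-one} it is a contracting element for $(\Om,\PS^\Om)$. Because the $\G$-action on $(\Om,\hil)$ is proper and by isometries, Sisto's framework (as used in the proof of Theorem~\ref{thm:acy-hyp}) realizes $\g_0$ as a loxodromic WPD element for a $\G$-action on a hyperbolic space. The torsion-freeness of $\G$ guarantees that the elementary closure $E(\g_0)$ is infinite cyclic, so that $\G$ contains a hyperbolically embedded copy of $\Zb$ and no finite-order obstruction interferes with the construction. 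On the coefficient side, for $1 < p < \infty$ the space $\ell^p(\G)$ is uniformly convex and $\rho^p_{\rm reg}$ is an isometric $\G$-representation, while the one-dimensional trivial module $\Rb$ is (trivially) uniformly convex as well.

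With these ingredients in place, both statements follow from the Bestvina--Bromberg--Fujiwara theorem \cite{BBF13}: for a group admitting a loxodromic WPD element on a hyperbolic space and which is not virtually cyclic, the reduced space of quasi-cocycles valued in any uniformly convex Banach $\G$-module is infinite-dimensional. Applying this with the trivial module $\Rb$ yields $\dim\big(\T{QH}(\G)\big) = \infty$, which is (1); applying it with the module $\ell^p(\G)$ yields $\dim\big(\T{QC}(\G;\rho^p_{\rm reg})\big) = \infty$, which is (2). The only genuine work is the bookkeeping of the previous paragraph, namely verifying that the contracting element supplied by Theorem~\ref{thm:contracting-iff-rank-one} really satisfies the precise WPD and elementary-closure hypotheses demanded by \cite{BBF13}. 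This is exactly where the two standing assumptions are used: torsion-freeness rules out finite-order degeneracies in $E(\g_0)$, while the non-virtually-cyclic hypothesis ensures $E(\g_0) \neq \G$, so that there are enough independent translates of $\g_0$ for the Bestvina--Fujiwara-style counting to produce infinitely many linearly independent classes. Once this identification is made, no further Hilbert-geometric input is needed.
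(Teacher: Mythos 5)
Your proposal is correct and follows essentially the same route as the paper: reduce to acylindrical hyperbolicity via Theorem \ref{thm:acy-hyp}, use torsion-freeness to dispose of the finite-normal-subgroup hypothesis in \cite{BBF13}, and apply that result to the uniformly convex modules $\Rb$ and $\ell^p(\G)$. The only slip is terminological---$\T{QC}(\G;\rho)$ is the kernel of the comparison map modulo the trivially-lying classes, not the full (or ``reduced'') $H^2_b(\G;\ell^p(\G))$---but nothing in your argument depends on that identification.
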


\noindent  We prove a more general Theorem \ref{thm:rk-1-hil-infinite-QC-rho}. On the other hand, if $\G \leq G$ is a lattice in a higher rank simple Lie group $G$, then a result of Burger-Monod implies that $\T{QH}(\G)=0$ \cite[Theorem 21]{BM2002}. Then Theorem \ref{thm:rk-1-hil-infinite-QH} and the rank rigidity result (\cref{thm:rank_rigidity_detailed}) implies:

\begin{corollary}(see Section \ref{sec:second_bdd_cohom})
\label{cor:qh-rigid}
If $(\Omega,\G)$ is a divisible Hilbert geometry and $\Om$ is irreducible, then $\dim (\T{QH} (\G)) = \infty$ if and only if $(\Om,\G)$ is a rank one Hilbert geometry. Otherwise $\dim (\T{QH}(\G))=0$.
\end{corollary}

\subsection{Counting of conjugacy classes.} 
\label{subsec:intro-counting}
For $g \in \Aut(\Omega)$, define the translation length (cf. \ref{sec:translation_length}) $\tau_{\Omega}(g):=\inf_{x \in \Omega} \hil(x,gx)$  and the stable translation length $$\tau_{\Omega}^{\st}(g):=\lim_{n \to \infty} \frac{\hil(x,g^nx)}{n}.$$ Note that $\tau_{\Omega}^{\st}(g)$ is independent of the base point $x \in \Omega$.  Now suppose that $(\Omega,\G)$ is a rank one Hilbert geometry. For $g \in \Gamma$, let $[c_g]$ denote the conjugacy class of $g$ in $\G$. Both $\tau_{\Omega}$ and $\tau_{\Omega}^{\st}$ are well-defined on the set of conjugacy classes in $\G$. Then for $t > 0$, define 
\begin{align*}
\Cc(t)&:=\#\{ [c_g] : g \in \G, ~\tau_{\Omega}([c_g]) \leq t \} \quad \text{ and } \\
\Cc^{\st}(t)&:= \# \{ [c_g] : g \in \G, \tau_{\Omega}^{\st}([c_g]) \leq t \}.
\end{align*}

Here $\Cc(t)$ (resp. $\Cc^{\st}(t)$) counts the number of conjugacy classes in $\Gamma$ whose translation length (resp. stable translation length)  is at most $t$. For divisible rank one Hilbert geometries, we prove an asymptotic growth formula for $\Cc(t)$ and $\Cc^{\st}(t)$. To state our result, we will require the critical exponent of $\Gamma$ which is defined as
\begin{equation*}
\omega_{\G}:=\limsup_{n \to \infty}\dfrac{\log \# \{ g \in \G: \hil(x,gx) \leq n\}}{n}
\end{equation*}
for some (and hence any) base point $x \in \Omega$. 

\begin{theorem}
\label{thm:counting-conjugacy}
(see Section \ref{sec:counting-conjugacy})
Suppose $(\Om, \G)$ is a divisible rank one Hilbert geometry and $\G$ is not virtually cyclic. Then there exists a constant $D'$ such that if $t\geq 1$, 
\begin{align*}
\dfrac{1}{D'} \dfrac{\exp (t\omega_{\G})}{t} ~\leq~ \Cc(t) ~\leq~ D' \dfrac{\exp (t\omega_{\G})}{t}.
\end{align*}
The function $\Cc^{\st}(t)$ also satisfies a similar growth formula as above.
\end{theorem}
\begin{remark}\label{rem:counting}\
\begin{enumerate}
\item An element $g\in\Gamma$ is called primitive if $g \neq h^n$ for any $h \in \Gamma$ and $|n| \geq 2$. If $\Cc_{\prim}(t)$ is the number of conjugacy classes $[c_g]$ of primitive elements in $\Gamma$ such that $\tau_{\Omega}([c_g]) \leq t$, then $\Cc_{\prim}(t)$ satisfies a similar growth formula as $\Cc(t)$.
\item In \cite[Proposition 1.5]{PLB2021}, Blayac establishes finer counting results for (a related notion of) rank one Hilbert geometries using very different techniques.
\end{enumerate}
\end{remark}
 Counting of conjugacy classes is often connected to counting of closed geodesics. However this connection is subtle for Hilbert geometries since there could be elements in $\G$ that do not act by a translation along any projective line in $\Omega$ (i.e. do not have an axis, cf. \cref{ex:triangle} (B)).

\subsection{Genericity and random walks.} Suppose $(\Omega,\G)$ is a rank one Hilbert geometry, $\G$ is not virtually cyclic and $\G$ is finitely generated. If $S$ is a finite symmetric generating set of $\G$, let $W_n(S)$ be the set of words of length $n$ in the elements of $S$. A simple random walk on $\G$ (with support $S$) is a sequence of $\G$-valued random variables $\{X_n\}_{n\in \Nb}$ with laws $\mu_n$ defined by: if $n\geq 1$ and $g \in \G$
\begin{align*}
\mu_n(\{g\})=\dfrac{\# \{ w \in W_n(S) : w \text{ represents } g \}}{\#W_n(S)}. 
\end{align*}
Using results in \cite{sisto_contracting_rw}, we show that rank one isometries in $\G$ are exponentially generic from the viewpoint of simple random walks. This roughly means that the probability that a long word, written down by randomly choosing generators of $\G$, is not a rank one isometry is small. In particular, this probability decays exponentially in the length of the word.
\begin{proposition}(see Section \ref{sec:other-proofs})
\label{prop:generic-rw}
Suppose $(\Omega,\G)$ is a rank one Hilbert geometry, $\G$ is not virtually cyclic and $\G$ is finitely generated. Then the rank one isometries in $\G$ are exponentially generic:  if $\{X_n\}_{n \in \Nb}$ is a simple random walk on $\G$, then there exists a constant $C \geq 1$ such that for all $n \geq 1$ 
\begin{equation*}\Pb \left[~X_n \text{ is not a rank one isometry} ~\right] \leq C e^{-n/C}.
\end{equation*}
\end{proposition}

\subsection{More consequences of acylindrical hyperbolicity}  
\begin{proposition} 
\label{prop:appl-acy-hyp}
(see Section \ref{sec:other-proofs}) If $(\Om,\G)$ is a rank one Hilbert geometry and $\G$ is not virtually cyclic, then: 
\begin{enumerate}
\item $\G$ is SQ-universal, i.e. every countable group embeds in a quotient of $\G$. 
\item if $\G$ is the Baumslag-Solitar group $\BS(m,n)$, then $m=n=0$ and $\G$ is the free group on two generators.
\end{enumerate}
\end{proposition}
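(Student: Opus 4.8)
The plan is to extract both statements from the dichotomy in Theorem~\ref{thm:acy-hyp} and then feed the output into standard structure theory for acylindrically hyperbolic groups. First I would observe that, since $\G$ is assumed not virtually $\Zb$, it is in particular not virtually cyclic, so Theorem~\ref{thm:acy-hyp} immediately yields that $\G$ is acylindrically hyperbolic. After this reduction neither conclusion sees the projective geometry of $\Om$ any longer: both are purely group-theoretic consequences of acylindrical hyperbolicity, and the only remaining task is to cite the right theorems (and, for the second part, to combine them with the arithmetic of the defining relation of $\BS(m,n)$).

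For part (1) there is essentially nothing further to do: acylindrically hyperbolic groups are SQ-universal by the work of Dahmani--Guirardel--Osin, so every countable group embeds in a quotient of $\G$. This is one of the standard ``largeness'' outputs that accompany the existence of a non-elementary acylindrical action on a hyperbolic space (equivalently, of a hyperbolically embedded free subgroup), and it applies to $\G$ verbatim once the reduction above is in hand.

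For part (2) the strategy is to prove that no Baumslag--Solitar group with a genuine HNN relation is acylindrically hyperbolic, so that the hypothesis $\G=\BS(m,n)$ is compatible with the acylindrical hyperbolicity just established only when $\G$ degenerates to the free group, i.e.\ $\G=F_2$. I would organize this by the sizes of $|m|$ and $|n|$. If $\min(|m|,|n|)=1$ the group is solvable, hence amenable, and an amenable group is never acylindrically hyperbolic. If $|m|=|n|$, the cyclic subgroup $\langle a^{m}\rangle\cong\Zb$ is infinite, normal and amenable---central when $m=n$, and $t$-invariant when $m=-n$ since $ta^{m}t^{-1}=a^{\pm m}$---so $\G$ would have infinite amenable radical, which is impossible for an acylindrically hyperbolic group.

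The remaining and genuinely hard case, which I expect to be the main obstacle, is $|m|,|n|\ge 2$ with $|m|\ne|n|$ (the motivating example being $\BS(2,3)$), where the amenable radical is finite and the easy obstructions vanish. Here I would analyze the action on the Bass--Serre tree, where $a$ is elliptic and the stable letter $t$ is the only candidate for a loxodromic element: the relation $ta^{m}t^{-1}=a^{n}$ forces the pointwise stabilizers of arbitrarily long geodesic segments along the axis of $t$ to remain infinite, so $t$ fails the WPD condition, and by the tree criterion of Minasyan--Osin one checks that $\G$ possesses no generalized loxodromic element and hence no hyperbolically embedded virtually cyclic subgroup. Consequently $\BS(m,n)$ is not acylindrically hyperbolic in this range either, so the hypotheses can hold only when $\G$ is free, giving $\G=F_2$. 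Making this no-WPD-element verification precise---rather than merely observing that the natural tree action fails to be acylindrical---is the delicate point on which the whole argument turns.
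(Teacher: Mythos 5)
Your reduction and your part (1) coincide with the paper's proof: since $\G$ is not virtually $\Zb$, Theorem \ref{thm:acy-hyp} gives acylindrical hyperbolicity, and SQ-universality is then exactly the Dahmani--Guirardel--Osin largeness theorem, which the paper cites as \cite[Theorem 8.1]{osin_acy_hyp}. The problem is in part (2), where the paper simply cites \cite[Example 7.4]{osin_acy_hyp} and you instead try to reprove that no Baumslag--Solitar group with $mn\neq 0$ is acylindrically hyperbolic.

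The gap sits precisely in the case you flagged as the main obstacle, $|m|,|n|\ge 2$ with $|m|\ne |n|$. Showing that $t$ fails WPD for the Bass--Serre tree action rules out nothing: a group is acylindrically hyperbolic as soon as \emph{some} action on \emph{some} hyperbolic space admits a loxodromic WPD element, so you would have to exclude all actions, not one. The Minasyan--Osin results on groups acting on trees are \emph{sufficient} conditions for acylindrical hyperbolicity; there is no converse ``tree criterion'' that converts failure of WPD on the Bass--Serre tree into the non-existence of generalized loxodromic elements, so the sentence in your proof that performs this promotion has no theorem behind it. (A smaller inaccuracy: $t$ is not the only loxodromic candidate for the tree action---any element not conjugate into $\langle a\rangle$, e.g.\ $tat^{-1}a^{-1}$, is loxodromic on the tree.) The argument behind the paper's citation closes this uniformly for all $mn\neq 0$: the subgroup $\langle a\rangle$ is \emph{s-normal} in $\BS(m,n)$, i.e.\ every conjugate $g\langle a\rangle g^{-1}$ meets $\langle a\rangle$ in an infinite subgroup (conjugation by $t^{\pm1}$ carries $\langle a^{m}\rangle$, resp.\ $\langle a^{n}\rangle$, into $\langle a\rangle$, and one induct on the number of $t$-letters in $g$), and Osin proves that an s-normal subgroup of an acylindrically hyperbolic group is itself acylindrically hyperbolic---which $\Zb$ is not. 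Once you use this, your first two cases (solvable when $\min(|m|,|n|)=1$; infinite normal amenable subgroup when $|m|=|n|$), though correct, become unnecessary: the s-normality argument subsumes the entire case analysis.
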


\subsection{Morse geodesics and Morse boundary.}
Roughly speaking, the Morse geodesics \cite{CORDES2017} in a geodesic metric space identify the ``hyperbolic directions". As a corollary to Theorem \ref{thm:contracting-iff-rank-one}, we prove that the axis of a rank one isometry is a Morse geodesic. 

\begin{proposition}(see Section \ref{sec:other-proofs})
\label{prop:rank-one-axis-morse}
If $\Om$ is a Hilbert geometry and $\g \in \Aut(\Om)$ is a rank one isometry, then the axis $\ell_\g$ of $\g$ is $\Kc$-Morse for some Morse gauge $\Kc: [1,\infty)\times[0,\infty)\to [0,\infty)$, i.e. if $\alpha$ is a $(\lambda,\varepsilon)$-quasi-geodesic  with endpoints on $\ell_\g$, then $\alpha \subset \Nc_{\Kc(\lambda,\varepsilon)}(\ell_\g)$.
\end{proposition}

 In \cite{CORDES2017}, Cordes introduced a notion of Morse boundary for proper geodesic metric spaces. In the cases of proper CAT(0) spaces and hyperbolic metric spaces, the Morse boundary coincides with the contracting boundary and the Gromov boundary respectively. Theorem \ref{thm:contracting-iff-rank-one} and Proposition \ref{prop:rank-one-axis-morse} implies that the Morse boundary $\partial_M \Om$ of a rank one Hilbert geometry $(\Om,\G)$ is non-empty. This inspires the following question (that we will not answer in this paper).
\begin{question}
Describe the Morse boundary $\partial_M \Om$ of a rank one Hilbert geometry $(\Om, \G)$. 
\end{question}

\subsection*{Recent developments}
Since this paper first appeared on the arXiv, there has been tremendous new developments in the field. We mention a few of them. Blayac has developed the Patterson-Sullivan theory for rank one Hilbert geometries in \cite{PLB2021}. In \cite{BV2023}, Blayac-Viaggi  has constructed examples of divisible rank one Hilbert geometries $(\Om,\G)$ in every dimension $d \geq 3$  where  $\G$ is not Gromov hyperbolic. In their examples, $\G$ is Zariski dense in $\SL_{d+1}(\Rb)$ and relatively hyperbolic with peripheral subgroups isomorphic to $\Zb \times H$, $H$ possibly non-Abelian \cite[Theorem 1.3]{BV2023}.

\subsection*{Outline of the paper}
We discuss the preliminaries in Part \ref{part:prelims}. Section \ref{sec:dynamics-auto} of Part \ref{part:prelims} is of particular interest as it discusses the geometry of $\omega$-limit sets of automorphisms in $\Aut(\Om)$. Part \ref{part:rank-one-hil-geom} develops the notion of rank one Hilbert geometries. We define rank one isometries and study their geometric properties in Section \ref{sec:rank-one-isom} and Section \ref{sec:rk-1-thin-triangles}. Our main tools here are the lemmas proven in Section  \ref{sec:axis}. In Section \ref{sec:rank-one-hil-geom}, we study rank one groups which are Zariski dense.

In Part \ref{part:contracting_element_hil_geom}, we prove Theorem \ref{thm:contracting-iff-rank-one} (in Sections \ref{sec:rank-one-implies-contracting} and \ref{sec:contracting-implies-rank-one}) and \cref{thm:acy-hyp} (in Section \ref{sec:acy_hyp}). Part \ref{part:applications} discusses several applications of our results, like computing the dimension of the space of quasi-morphisms, counting of conjugacy classes and genericity from the viewpoint of random walks. We discuss generalizations, examples and non-examples of rank one Hilbert geometries in Appendix \ref{sec:eg-rank-one}. In Appendix \ref{appendix:contracting-BF}, we discuss the equivalence of  two notions of contracting elements.

\subsection*{Acknowledgement}
I would like to heartily thank my advisor Ralf Spatzier. This work has benefited greatly from my conversations with Andrew Zimmer, Harrison Bray, Samantha Pinella, Thang Nguyen, Ludovic Marquis and Beatrice Pozzetti. I also thank the anonymous reviewers for their comments, corrections and constructive criticism. I was partially supported by NSF Grant 1607260 during the course of this work.

\part{Preliminaries}
\label{part:prelims}

\section{Notation}
\label{sec:notation}
We set the following notation as standard for the rest of the paper.
\begin{enumerate}
\item If $v \in \Rb^{d+1}\setminus \{0\}$, let $[v]$ or $\pi(v)$ denote its image in $\RP$. Conversely, if $u \in \RP$, we will use $\T u$ to denote a lift of $u$ (i.e. $\pi(\T u)=u$). 
\item If $A \in \GL$, let $[A]$ denote its image in $\PGL$  while $\T B \in \GL$ will denote a lift of $B \in \PGL$.
\item If $W \leq \Rb^{d+1}$ is a non-zero linear subspace, let $\Pb(W)$ denotes its projectivization. 
\item If $g \in \GL$, the eigenvalues of $g$ (over $\Cb$) are denoted by $\ev_1(g),\dots,\ev_{d+1}(g)$. We index them in the non-increasing order of their absolute values, i.e. $|\ev_1(g)| \geq \cdots \geq |\ev_{d+1}(g)|.$\\
Let $\evmax(g):=|\ev_1 (g)|$ and $\evmin(g):=|\ev_{d+1}(g)|.$
\item If $g \in \PGL$ and $1 \leq i \neq j \leq d+1$, $\left|\dfrac{\ev_i}{\ev_j}(g) \right|:=\left| \dfrac{\ev_i(\T g)}{\ev_j(\T g)} \right|$ for some (hence any) lift $\wt g \in \GL$ of $g$.
\end{enumerate}

\section{Hilbert Geometries}
\label{sec:hil-geom}

\subsection{Properly convex domains} 
\label{subsec:prop-convex}
An open set $\Om \subset \RP$ is called a \emph{properly convex domain} if there exists a co-dimension one subspace $H \leq \Rb^{d+1}$ such that $\clOm$ is a bounded (Euclidean) convex domain in the affine chart $\Ab:=\RP \setminus \Pb(H)$.

\begin{remark}
If $L \leq \Rb^{d+1}$ is a 2-dimensional linear subspace, then $\Pb(L) \not \subset \clOm$ for any properly convex domain $\Om$. This elementary observation that a properly convex domain cannot contain an entire projective line will be used in many of the proofs in this paper.
\end{remark}

 For a non-empty set $X \subset \Rb^{d+1}$, let $\Sp (X)$ denote the linear span of $X$. If $X'\subset \RP$ is non-empty, define $$\Sp(X'):= \Sp \left( \{ \T x \in \Rb^{d+1} : \pi(\T x) \in X'\} \right).$$ Suppose $\Om$ is a properly convex domain. If $x,y \in \clOm$, let $[x,y]$ denote the unique closed connected subset of $\Pb(\Sp\{x,y\}) \cap \clOm$ that joins $x$ and $y$. We will call $[x,y]$ \emph{the projective line segment} between $x$ and $y$ (note that the notion of a projective line segment dependens on $\Om$, but we assume that $\Om$ will be clear from context and suppress it for brevity).  We introduce the notation: $(x,y):=[x,y]\setminus \{x,y\}$, $[x,y):=[x,y] \setminus \{y\}$ and $(x,y]:=[x,y] \setminus \{x\}.$ We will call $(x,y)$ an \emph{open projective line segment}. When we say that $[x,y]$ (or $(x,y)$) is \emph{non-trivial}, we mean $x \neq y$. 

We have a notion of convexity and convex hull in a properly convex domain $\Om$. A set $Y\subset \clOm$ is \emph{convex} if $[y_1,y_2] \subset Y$ for all $y_1, y_2 \in Y$. If $X \subset \clOm$ is a non-empty set, then $\CH_{\clOm}(X)$ is the smallest closed convex subset of $\clOm$ that contains $X$. We define $$\CH_{\Om}(X):=\Om \cap \CH_{\clOm}(X).$$

\subsection{Hilbert metric} 
Suppose $\Om$ is a properly convex domain and $\Ab$ is affine chart that contains $\clOm$ as a compact subset. We equip $\Ab$ with the Euclidean norm $|\cdot|$. If $x, y\in \Om$, then there exist $a,b \in \bdry$ such that $\Pb(\Sp(\{x,y\})) \cap \clOm=[a,b]$ where the four points appear in the order $a,x ,y,b$.  The cross-ratio of these four points is given by 
\begin{equation*}
 \left[ a, x, y, b \right]:= \left( \dfrac{|b-x||y-a|}{|b-y||x-a|}\right).
 \end{equation*}
The \emph{Hilbert metric} on $\Omega$ is defined by
\begin{equation*}
 \hil(x,y):=\frac{1}{2}\log ~([ a, x, y, b ]).
 \end{equation*}
 
\begin{observation}
\label{rem:distance_comparison_between_domains}
If $\Omega' \subset \Omega$ are properly convex domains and $x,y \in\Omega'$, then $\dist_{\Omega}(x,y) \leq \dist_{\Omega'}(x,y)$.
\end{observation}

 If $x,y \in \Om$, then $[x,y]$ is a geodesic in $(\Om,\hil)$ joining $x$ and $y$. In order to emphasize this fact, we will often refer to the projective line segment $[x,y]$ as the \emph{projective geodesic segment} between $x$ and $y$. If $(x,y) \subset \Om$ with $x,y \in \bdry$, then $(x,y)$ is a bi-infinite geodesic in $(\Om,\hil)$ and we will call it a \emph{(bi-infinite) projective geodesic}. 
 
 The space $(\Om,\hil)$ is a proper, complete and geodesic metric space and we will call $\Om$ a \emph{Hibert geometry}, see \cref{defn:hil-geom}. The group $\Aut(\Om) :=\{ g \in \PGL : g \Om = \Om \}$ acts properly and isometrically on $(\Om,\hil)$. However note that the projective geodesic may not be the unique geodesic between points in $(\Om,\hil)$. Consider, for example, the two-dimensional simplex $T_2:=\Pb(\Rb^+e_1 \oplus \Rb^+e_2 \oplus \Rb^+e_3)$ with its Hilbert metric $d_{T_2}$. Then generic points $x \neq y \in T_2$ have uncountably many geodesics (for the Hilbert metric $d_{T_2}$) joining them  \cite[Proposition 2]{harpe_hilbert_metric}.

\begin{definition}
\label{defn:cone_over_om}
For a Hilbert geometry $\Om$, $\pi^{-1}(\Om):=\{v \in \Rb^{d+1} : \pi(v) \in \Omega\}$ has two connected components.  The \emph{cone above (or over) $\Om$}, denoted by $\T \Om$, is a connected component of $\pi^{-1}(\Om)$.
\end{definition} 
Then $\pi^{-1}(\Om)=\T \Om \sqcup (-\T \Om)$. Note that if $g \in \Aut(\Om)$, then there is a lift $\T g \in \GL$ of $g$ that preserves $\wt \Omega$, i.e.  $\T g \cdot \T \Om=\T \Om$. Indeed, if a lift $\T g$ does not preserve $\T \Om$, then $\T g \cdot \T \Om=-\T \Om$ and hence $-\T g$ preserves $\T \Om$. We have the following elementary observation about such lifts of automorphisms. 
\begin{observation}
\label{lem:lift_has_pos_eigenvalues}
Suppose $\wt \Omega$ is a cone above $\Omega$ and $\wt g \in \GL$ preserves $\wt \Om$. If $\wt{a} \in \wt \Om$ satisfies $\wt g \cdot \wt a= \ev \cdot  \wt{a}$, then $\ev>0$.  
\end{observation}
\begin{proof}
Clearly $\ev \neq 0$. As $\wt g$ preserves $\wt \Om$, $\wt g \cdot \wt a \in \wt \Om$ which implies that $\ev \cdot \wt a \in \wt \Om$. Now if $\ev<0$, then $\wt a \in (-\wt \Om)$.  But then $\wt a \in \wt \Om \cap -\wt \Om=\emptyset$, a contradiction.\end{proof}

 \subsection{Projective simplices}
 \label{subsec:simplex}

The standard $k$ dimensional projective simplex in $\RP$ is $$T_k:=\{[x_1:\dots:x_{k+1}:0:\dots:0] \in \RP ~|~  x_1,\dots,x_{k+1}>0 \}.$$   A $k$-dimensional projective simplex is a subset of $\RP$ of the form $gT_k$ for some $g \in \PGL$. If $\Om \subset \RP$ is a properly convex domain and  $S \subset \Om$ is a projective simplex, then we say that $S$ is a \emph{properly embedded simplex in $\Om$} if and only if $\partial S \subset \partial \Om$.

The Hilbert metric $\dist_{T_k}$ on $T_k$ is given by
\begin{align*}
\dist_{T_k}([x_1:\dots:x_{k+1}:0:\dots:0],[y_1:\dots:y_{k+1}:0:\dots:0])=\max_{1\leq i,j \leq k+1}\dfrac{1}{2} \left| \log \dfrac{x_iy_j}{x_jy_i} \right|.
\end{align*}
Then $(T_k,\dist_{T_k})$ is quasi-isometric to the real Euclidean space of dimension $k$. For a more elaborate discussion, see \cite[Section 5]{IZ2019}, \cite{N1988} or \cite{harpe_hilbert_metric}. The group $\Aut(T_d)$ is generated by the group of permutation matrices in $\PGL$ and the group $\{[\diag(\ev_1,\dots,\ev_{d+1})] \in \PGL : \ev_1,\dots,\ev_{d+1}>0\}$.

\begin{lemma}
\label{lem:simplex_translation_dist}
Suppose $g:=[\diag(\ev_1,\dots,\ev_{d+1})] \in \Aut(T_d)$ where $\ev_i>0$ for all $i=1,\dots,d+1$. Let $\ev_{\max}:=\max_{1\leq i\leq d+1} \ev_i$ and $\ev_{\min}:=\min_{1 \leq i\leq d+1}\ev_i$. Then $\dist_{T_d}(x,gx)=\dfrac{1}{2}\log\dfrac{\ev_{\max}}{\ev_{\min}}$ for any $x \in T_d$.
\end{lemma}
\begin{proof}
Fix $x=[x_1:\dots:x_{d+1}]\in T_d$. Using the formula for $\dist_{T_d}$,
\begin{equation*}
\dist_{T_d}(x,gx)=\max_{1\leq i,j \leq d+1}\dfrac{1}{2} \left| \log \dfrac{x_i\ev_jx_j}{x_j\ev_ix_i} \right|=\max_{1\leq i,j \leq d+1}\dfrac{1}{2} \left| \log \dfrac{\ev_j}{\ev_i} \right|=\dfrac{1}{2}\log\dfrac{\ev_{\max}}{\ev_{\min}}.  \qedhere
\end{equation*}
\end{proof}

\subsection{Examples of Hilbert geometries.} 
\label{sec:eg-div-hil-geom}

The projective open ball $\Om_d:=\{ [x_1:\dots:x_d:1 | \sum_{i=1}^d x_i^2 < 1\}$ in $\RP$ is the simplest example of a  divisible strictly convex Hilbert geometry. In fact $\Om_d$ with its Hilbert metric is isometric to $\Hb^d$ and is well-known as the Beltrami Klein model of the real hyperbolic space. Moreover $\Aut(\Om_d)={\rm PO}(d,1).$ There are several examples of divisible strictly convex Hilbert geometries that are not isometric to $\Hb^d$: in dimension 4, there is a construction due to Benoist \cite[Proposition 3.1]{B2006} while Kapovich constructed examples in all dimensions above 4 \cite{kapovich2007}.

 Among non-strictly convex (divisible) Hilbert geometries, the simplest example is the standard $d$-simplex $T_d$, see \ref{subsec:simplex}. But this example is reducible, a term which we now define. Recall that a convex cone in $\Rb^{d+1}$ is a set $C \subset \Rb^{d+1}$ such that $r_1v_1+r_2v_2 \in C$ whenever $v_1, v_2 \in C$ and $r_1, r_2 >0$.

\begin{definition}
\label{defn:irreducible}
A properly convex domain $\Om \subset \RP$ is \emph{reducible} if there exist convex cones $C_1 \subset \Rb^{d_1}$ and $C_2 \subset \Rb^{d_2}$ with $d_1,d _2 \geq 1$ such that $\Om=\Pb(C_1 \oplus C_2)$.  Otherwise, $\Om$ is \emph{irreducible}.
\end{definition}

 An irreducible non-strictly convex (divisible) example is $\Pos_d$ ($d \geq 3$), the set of positive-definite real symmetric $d \times d$ matrices of unit trace. It is a properly convex domain in $\Rb^{d(d+1)/2}$ and is a projective model for the symmetric space of $\SL_d(\Rb)$. The notion of symmetric domains generalize $\Pos_d$. A symmetric domain $\Om$ is a properly convex domain such that: for each $x \in \Om$, there exists an order two isometry $s_x \in \Aut(\Om)$ where $x$ is the unique fixed point of $s_x$ in $\Om$. Symmetric domains of real rank at least two are real projective analogues of higher rank Riemannian symmetric spaces of non-positive curvature, see \cite{benoist_cd_survey, Z2019} for details. As one might expect, symmetric domains are very special in the theory of properly convex domains. The rank rigidity theorem \ref{thm:rank_rigidity_detailed} mentioned in the introduction is also a result in this spirit. Benoist proved the following result.
 
 \begin{theorem}[{\cite[Theorem 5.2]{benoist_cd_survey}}]
 \label{thm:benoist_zariski_dense}
 Suppose $\Om \subset \RP$ is an irreducible properly convex domain that is not a symmetric domain.  If $\G \leq \SL_{d+1}(\Rb)$ is a discrete subgroup that acts co-compactly on $\Om$, then $\G$ is Zariski dense in $\SL_{d+1}(\Rb)$. 
 \end{theorem}

 Besides simplices and the symmetric domains of real rank at least two, only a few examples of divisible non-strictly convex Hilbert geometries are known. These are low dimensional examples; see for instance \cite{benoist_cd4, CLM2016}, which rely on Coxeter group constructions, or \cite{BDL2018}, which uses `cusp-doubling' construction for certain three manifolds.

\subsection{Closest-point projection for the Hilbert metric.} 
Suppose $\Om$ is a Hilbert geometry. If $r>0$, we set $$\Bc_{\Om}(x,r):=\{ y \in \Om : \hil(x,y) < r \}.$$ 
\begin{lemma}[{\cite[Lemma1.7]{cooper_long}}]
\label{lem:convexity_of_balls}
$\Bc_{\Om}(x,r)$ is a relatively compact and convex set.
\end{lemma}
If $C \subset \Om$ is a closed convex set, we define the closest-point projection on $C$ as: if $x \in \Om$, 
\begin{align*}
\Pi_C(x):=C \cap \overline{\Bc_{\Om}(x, \hil(x,C))}.
\end{align*}

As the intersection of two closed convex sets is again a closed convex set, \cref{lem:convexity_of_balls} immediately implies the following.
\begin{observation}
Suppose $\Om$ is a Hilbert geometry, $C$ is a closed convex set and $x \in \Om$. Then $\Pi_C(x)$ is a compact convex set.
\end{observation}
\begin{corollary}
\label{cor:P-sigma}
Suppose $\sigma: \Rb \to  (\Om,\hil)$ is a unit speed parametrization of the bi-infinite  projective geodesic $\sigma(\Rb)$ with $\sigma(\pm \infty) \in \bdry$. If $x \in \Omega$, then there exist  $T_x^-, T_x^+ \in \Rb$ with $T_x^-\leq T_x^+$ such that $$\Pi_{\sigma(\Rb)}(x)= \left[ \sigma(T_x^-),\sigma(T_x^+) \right].$$ 
\end{corollary}
\begin{proof}
Any compact convex subset of the bi-infinite projective geodesic $\sigma(\Rb)$ is of the form $[\sigma(T), \sigma(T')]$ with $T \leq T'$. 
\end{proof}

\subsection{Faces of properly convex domains}
Suppose $\Om$ is a Hilbert geometry. We define the relation $\sim_{\Omega}$: if $p, q \in \clOm$, then $p \sim_{\Omega} q$ if and only if  there exists an open projective line segment in $\clOm$ that contains both $p$ and $q$. The relation $\sim_{\Om}$ is an equivalence relation (see \cite[Section 3.3]{CM2014}). The equivalence class of $p \in \clOm$ is called the (open) face of $p$ and is denoted by $F_{\Omega}(p)$.

\begin{proposition} \label{prop:faces}
Suppose $\Omega \subset \RP$ is a Hilbert geometry. 
\begin{enumerate}
\item If $x \in \bdry$, then $F_{\Omega}(x) \subset \bdry.$ 
\item $F_{\Omega}(x)= \Omega$ if and only if $x \in \Omega.$
\item If $x, y \in \bdry$, then either $[x,y] \subset \bdry$ or $(x,y) \subset \Omega.$ 
\item Suppose $[x, y] \subset \bdry$, $a \in F_{\Omega}(x)$ and $b \in F_{\Omega}(y)$. Then, $[a,b] \subset \bdry.$
\end{enumerate}
\end{proposition}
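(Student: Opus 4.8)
Throughout I would fix an affine chart in which $\clOm$ is a compact convex body, so that all four parts become statements in ordinary (affine) convex geometry and the Hilbert metric plays no role. Two elementary facts about the convex body $\clOm$ drive everything. First, the \emph{segment fact}: if $p\in\Om$ (the interior) and $q\in\clOm$, then every point of the half-open segment $[p,q)$ lies in $\Om$. Second, the \emph{support fact}: if $H$ is a supporting hyperplane of $\clOm$ and some relative-interior point of a segment $[u,w]\subset\clOm$ lies on $H$, then $[u,w]\subset H$; this is because the affine functional defining $H$ attains its maximum over $\clOm$ at an interior point of $[u,w]$, forcing it to be constant along $[u,w]$.

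\textbf{Parts (1)--(3).} For (1), suppose $z\sim_\Om x$ with $x\in\bdry$; then some segment $I\subset\clOm$ has both $x$ and $z$ in its relative interior. If $z$ were interior, the segment fact would place the entire relative interior of $I$, in particular $x$, inside $\Om$, contradicting $x\in\bdry$; hence $F_\Om(x)\subset\bdry$. Part (2) follows quickly: if $x\in\Om$, then extending the segment through any $y\in\Om$ slightly inside $\Om$ shows $y\sim_\Om x$, so $\Om\subset F_\Om(x)$, while a boundary point $z\in F_\Om(x)$ would give $x\in F_\Om(z)\subset\bdry$ by (1), a contradiction, so $F_\Om(x)=\Om$; conversely, if $x\in\bdry$ then $F_\Om(x)\subset\bdry$ is disjoint from the nonempty open set $\Om$, so $F_\Om(x)\neq\Om$. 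For (3), note $[x,y]\subset\clOm$ by convexity; if some $z\in(x,y)$ lies in $\Om$, the segment fact applied toward $x$ and toward $y$ gives $(x,y)\subset\Om$, and otherwise $(x,y)\subset\clOm\setminus\Om=\bdry$, whence $[x,y]\subset\bdry$.

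\textbf{Part (4), the main point.} Here is where the work is. The plan is to produce a single supporting hyperplane $H$ that contains the whole face-configuration. Since $[x,y]\subset\bdry$, I would pick $z\in(x,y)$ and a supporting hyperplane $H$ at $z$; by the support fact $[x,y]\subset H$, and $\Om\cap H=\emptyset$. I would then show $F_\Om(x)\subset H$: for $a\in F_\Om(x)$ there is a segment $I\subset\clOm$ with $a$ and $x$ both in its relative interior, and since $x\in H$ lies in the relative interior of $I$, the support fact forces $I\subset H$, so $a\in H$. The identical argument gives $F_\Om(y)\subset H$, hence $a,b\in H$. Finally $[a,b]\subset\clOm$ by convexity and $[a,b]\subset H$ since $a,b\in H$, so $[a,b]\subset\clOm\cap H\subset\clOm\setminus\Om=\bdry$.

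\textbf{Main obstacle.} The only genuinely nontrivial step is the construction and reuse of the common supporting hyperplane in (4): one must observe that $[x,y]\subset\bdry$ yields a hyperplane containing the \emph{entire} segment rather than a single contact point, and that this same hyperplane then absorbs the faces of both endpoints. The degenerate cases where a face is a single extreme point ($a=x$ or $b=y$) are automatic, since then $a\in H$ or $b\in H$ trivially. Everything else is routine convexity, and I would take care only to phrase the arguments in a projectively invariant way by working consistently in the chosen affine chart.
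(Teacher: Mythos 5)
Your proof is correct, and parts (1)--(3) are in substance the same elementary segment arguments the paper gives. Part (4), however, takes a genuinely different route. The paper argues synthetically and by contradiction: assuming some $w\in(a,y)\cap\Omega$, it extends $x$ inside the face $F_{\Omega}(x)$ to a point $a'$ with $x\in(a,a')$, observes inside the planar configuration $\CH_{\Omega}\{a',y,a\}$ that the segment $[w,a']$ must cross $[x,y]$, and notes that the crossing point would have to lie simultaneously in $\Omega$ (by the segment fact, since $w\in\Omega$) and in $\bdry$ (since $[x,y]\subset\bdry$) --- a contradiction; it then gets $[a,b]\subset\bdry$ by applying this one-endpoint step twice. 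You instead invoke the supporting hyperplane theorem: a hyperplane $H$ supporting $\clOm$ at an interior point of $[x,y]$ must contain all of $[x,y]$ (an affine functional maximized at an interior point of a segment is constant on it), and the same "support fact" then forces every face segment through $x$ or $y$ into $H$, so $F_{\Omega}(x)\cup F_{\Omega}(y)\subset H$ and $[a,b]\subset\clOm\cap H\subset\bdry$. Your argument is direct rather than by contradiction, and it proves something slightly stronger --- the entire convex hull of $F_{\Omega}(x)\cup F_{\Omega}(y)$ lies in the single supporting hyperplane $H$, hence in $\bdry$ --- at the cost of importing the supporting hyperplane theorem; the paper's proof buys self-containment, using nothing beyond the interior-segment fact and planar incidence, which keeps the argument purely projective/synthetic. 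Both are complete and correct.
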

\begin{proof} For part (1), note that if $y \in \Omega$, then $[y,x]$ cannot be extended beyond $x$ in $\clOm.$ Thus $F_{\Om}(x) \subset \clOm-\Om=\bdry$. Part (2) follows from part (1). \\
\noindent (3) If $[x,y] \not \subset \bdry$, then choose any $z \in (x,y) \cap \Omega.$ So, $F_{\Omega}(z)=\Omega$. Then $(x,y) \subset F_{\Omega}(z) \subset \Omega.$  \\
\noindent (4) It suffices to prove this for $b=y$, i.e. to prove that $[a,y] \subset \bdry.$ Suppose, for a contradiction, that $(a,y) \subset \Omega$. Then $a \neq x$. Pick $a' \in F_{\Omega}(x)$ such that $x \in (a,a')$.  As $(a,y) \subset \Omega$, pick $w\in (a,y)$. Then $(w,a')\subset \Omega$. Thus $\CH_{\Omega}\{a',y,a\}$ is a non-empty set, as it contains $(w,a')\subset \Omega$. Hence, $\CH_{\Omega}\{a',y,a\} \subset \Omega$. This implies that $a,a'$ and $y$ span a 2-simplex in $\clOm$ and the interior of this 2-simplex is contained in $\Omega$.   As $x \in (a,a')$, $(x,y)$ is contained in the interior of this 2-simplex. Thus $(x,y) \subset \Omega$, a contradiction. 
\end{proof}

\subsection{Distance estimates}

\begin{proposition}[{\cite[Proposition 5.2]{IZ_flat_torus}}]
\label{line-in-bdry-0}
Suppose $\Om$ is a Hilbert geometry and $\{x_n\}$ and $\{y_n\}$ are sequences in $\Om$  such that $x:=\lim_{n \to \infty}x_n$ and $y:=\lim_{n \to \infty}y_n$ exist in $\clOm.$ If $$\liminf_{n \to \infty} \hil(x_n,y_n) < \infty,$$ then $y \in F_{\Om}(x)$ and $$\dist_{F_{\Om}(x)}(y,x) \leq \liminf_{n \to \infty} \hil(x_n,y_n).$$ 
\end{proposition}
Note that if $\liminf_{n \to \infty} \hil(x_n,y_n)=0$, then the above proposition implies that $y=x$.

Next we need the notion of Hausdorff distance. If $(X,d)$ is a metric space, then the Hausdorff distance between $A,B \subset X$ is defined by
\begin{align*}
d^{\Haus}(A,B)=\max \left\{ \sup_{a \in A} d(a,B), \sup_{b \in B} d(b,A)\right\}. 
\end{align*}
 
\begin{proposition}[{\cite[Proposition 5.3]{IZ_flat_torus}, \cite[Lemma 1.8]{cooper_long}}]
\label{prop:iz-dist-estimate}
Suppose $\Om$ is a Hilbert geometry and $x_1,x _2, y_1, y_2 \in \clOm$ such that $F_{\Om}(x_1)=F_{\Om}(x_2)$ and $F_{\Om}(y_1)=F_{\Om}(y_2)$. If $(x_1,y_1) \subset \Om$, then 
\begin{equation*}
\hil^{\Haus}\left( (x_1,y_1), (x_2,y_2) \right) \leq \max \{\dist_{F_{\Om}(x_1)}(x_1,x_2),  \dist_{F_{\Om}(x_2)}(y_1, y_2)\}.
\end{equation*}
\end{proposition}

 In particular, if $x_i, y_i \in \Om$, then $\hil^{\Haus}\left( [x_1,y_1], [x_2,y_2] \right) \leq \max \{\hil (x_1,x_2),  \hil(y_1, y_2)\}.$

\subsection{Translation length} 
\label{sec:translation_length}
Supppose $\Om$ is a Hilbert geometry and $g \in \Aut(\Om)$, then its  \emph{translation length} is defined by $\tau_{\Om}(g):=\inf_{x \in \Omega} \hil(x,gx).$

\begin{remark}[{\cite[Observation 7.2]{IZ_flat_torus}}]
\label{rem:translation_length_ineq}
Suppose $g \in \Aut(\Om)$ and $W\leq \Rb^{d+1}$ is a $g$-invariant subspace of dimension $\geq 2$ such that $\Omega \cap \Pb(W)$ is non-empty. Then $\tau_{\Om}(g) \leq \tau_{\Om \cap \Pb(W)}(g|_W)$.
\end{remark}

\begin{proposition}[{\cite[Proposition 2.1]{cooper_long}}]
\label{prop:translation_length}
If $g \in \Aut(\Om)$, then $\tau_{\Om}(g)=\dfrac{1}{2}\log \left| \frac{\ev_1}{\ev_{d+1}}(g) \right|.$
\end{proposition}
Note that this differs from the formula in \cite{cooper_long} by a factor of $1/2$. This is because our definition of Hilbert metric has the  factor of $1/2$. We further remark that if $\wt g \in \GL$ is any lift of $g$, then $\tau_{\Om}(g)=\frac{1}{2}\log\frac{\ev_{\max}(\wt g)}{\ev_{\min}(\wt g)}$.

\subsection{Minimal translation sets.}
\label{subsec:min_translation}
Suppose $\Om$ is a Hilbert geometry and $\G \leq \Aut(\Om)$. If $H \leq \G$ is a subgroup, then the minimal translation set of $H$ in $\Om$ is 
\begin{align*}
{\rm Min}_{\Om}(H):=\bigcap_{h \in H}\{ x \in \Om : \hil(x,h\cdot x) = \tau_{\Om}(h)\}.
\end{align*}

\begin{example}\
\begin{enumerate}
\item If $g=[\diag(\ev_1,\dots,\ev_{d+1})] \in \Aut(T_d)$ with $\ev_i>0$ for all $i=1,\dots,d+1$, then \cref{lem:simplex_translation_dist} implies that $T_d={\rm Min}_{T_d}(\langle g 
\rangle)$.
\item The minimal translation set could be empty, e.g. if $u$ is a parabolic isometry in $PO(2,1)$, then $\tau_{\Hb^2}(u)=0$ and the minimal translation set of $\langle u \rangle$ is empty. 
\end{enumerate}
\end{example}

We will need the following result connecting eigenspaces with minimal translation sets. 
\begin{lemma}
\label{lem:min_trans_set_contains_eigenspace}
Suppose $a,b,c \in \bdry$ are three distinct fixed points of $g \in \Aut(\Omega)$.Then $$\CH_{\Omega}\{ a,b,c\}  \subset {\rm Min}_{\Omega}(\langle g \rangle).$$
\end{lemma}
\begin{proof}Without loss of generality, we can assume that $\CH_{\Omega}\{ a,b,c\}$ is non-empty since the result is obviously true otherwise. Let $T:=\CH_{\Omega}\{a,b,c\}$. 
Fix a cone $\wt \Om$ over $\Omega$ and a lift $\wt g$ of $g$ that preserves $\wt \Omega$. Let $V=\Sp\{a,b,c\}$ and $g_0:=\wt{g}|_V$. Let $\wt a, \wt b, \wt c \in \wt \Omega$ be lifts of $a,b,c$ respectively and fix the basis $\{ \wt a, \wt b, \wt c\}$ of $V$. In this basis, $g_0=\diag (t_1, t_2, t_3)$. By  \cref{lem:lift_has_pos_eigenvalues}, we can assume that $t_1, t_2, t_3>0$. Since $T$ is a 2-simplex in $\Pb(V)$, \cref{lem:simplex_translation_dist}  implies that  $$d_T(x,g_0x)=\dfrac{1}{2} \log  \dfrac{\max\{t_1,t_2,t_3\}}{\min\{t_1,t_2,t_3\}}$$ for any $x \in T$.

Suppose $\ev_1(\wt g), \dots, \ev_{d+1}(\wt g)$ are the eigenvalues of $\wt g$, indexed in the non-increasing order of their modulus. Then
\[
|\ev_{1}(\wt g)| \geq \max\{t_1,t_2,t_3\} \geq \min \{t_1,t_2,t_3\} \geq |\ev_{d+1}(\wt g)|.
\]
By \cref{prop:translation_length}, $\tau_{\Omega}(g)=\dfrac{1}{2}\log \left| \ \dfrac{\ev_1(\wt g)}{\ev_{d+1}(\wt g)} \right|$. 
Then  $d_T(x,g_0x) \leq \tau_{\Omega}(g)$ for any $x\in T$. 

As $\Omega \cap \Pb(V) \supset T$, \cref{rem:distance_comparison_between_domains} implies that  $d_T(y',y) \geq d_{\Omega \cap \Pb(V)}(y',y)$ for any $y',y \in T$. Then $d_T(x,g_0x) \geq \tau_{\Omega \cap \Pb(V)} (g_0)$ for any $x \in T$. Then \cref{rem:translation_length_ineq} implies that $d_T(x,g_0x) \geq \tau_{\Omega}(g)$. Thus $d_T(x,g_0x)=\tau_{\Omega}(g)$ for any $x \in T$. Hence $T \subset {\rm Min}_{\Omega}(g)={\rm Min}_{\Omega}(\langle g \rangle)$.
\end{proof}

The next result concerns translation length and minimal translation sets of compact subgroups. This is essentially restatement of \cite[Lemma 2.1]{marquis_survey} which shows that every compact subgroup of $\Aut(\Omega)$ has a fixed point in $\Omega$.
\begin{lemma}[{\cite[Lemma 2.1]{marquis_survey}}]
\label{lem:compact_subgroup}
Suppose $\Om$ is a Hilbert geometry and $H \leq \Aut(\Omega)$ is a compact subgroup. Then $\tau_{\Omega}(h)=0$ for all $h \in H$ and  
${\rm Min}_{\Omega}(H)=\{ x \in \Omega : H \cdot x=x \}\neq \emptyset$. 
\end{lemma}

\subsection{Centralizers} 
Suppose $\Om$ is a Hilbert geometry and $\G \leq \Aut(\Om)$. If $H \leq \G$ is a subgroup, the centralizer of $H$ in $\G$ is 
\begin{align*}
C_{\G}(H):=\bigcap_{h \in H}\{ g \in \G : ghg^{-1}=h\}.
\end{align*}
We will need to following result on co-compactness of centralizer subgroups. 
\begin{theorem}[{\cite[Theorem 1.10]{IZ_flat_torus}}]
\label{thm:centralizer}
Suppose $\Om$ is a Hilbert geometry, $\Cc \subset \Om$ is a closed convex subset, and $\G \leq \Aut(\Om)$ is a discrete subgroup that acts co-compactly on $\Cc$. If $A \leq \G$ is an Abelian subgroup, then $C_{\G}(A)$ acts co-compactly on $\CH_{\Om}({\rm Min}_{\Cc}(A))$ where $${\rm Min}_{\Cc}(A):=\Cc \cap {\rm Min}_{\Om}(A).$$
\end{theorem}

\subsection{Proximality}
\label{sec:proximality}
 We call $g \in \GL$ proximal if $g$ has a unique eigenvalue of maximum modulus and the multiplicity of this eigenvalue is 1, or equivalently if $$|\ev_1(g)|>|\ev_2(g)|.$$ We will say that $g \in \GL$ is biproximal if both $g$ and $g^{-1}$ are proximal, i.e $|\ev_1(g)|>|\ev_2(g)|$ and $|\ev_d(g)|> |\ev_{d+1}(g)|$. Note that the notion of proximality is invariant under scaling a matrix by non-zero real numbers. Then $\g \in \PGL$ is proximal (resp. biproximal) if some (hence any) lift of $\g$ is proximal (resp. biproximal).

\section{Dynamics of Automorphisms}
\label{sec:dynamics-auto}

\subsection{$\omega$-limit sets of automorphisms}
\label{subsec:defn-omega-lim-set}

Let $\Omega \subset \Pb(\Rb^{d+1})$ be a Hilbert geometry and let $\g \in \Aut(\Omega)$ with $\tau_{\Omega}(\g)=\frac{1}{2}\log \left| \frac{\ev_1}{\ev_{d+1}}(\g) \right|>0$. Recall that for any $X \subset \Omega$, $\overline{X}$ denotes the closure of $X$ in $\clOm$. We define the $\omega$-limit set of $\g$ as
\begin{align*}
\omega(\gamma,\Omega):= \bigcup_{x \in \Omega} \left( \overline{\{ \g^n x : n \in \Nb \}} \cap \bdry \right),
\end{align*}  
Thus $\omega(\gamma,\Omega)$ is the union of all accumulation points in $\bdry$ of all $\{ \gamma^n : n \in \Nb\}$-orbits in $\Omega$. 

\begin{example}
\label{example:omega_limit_in_T2}
Let $\Omega=T_2$ and $\gamma=[\diag(1,2,2)]$. Then for any $x=[x_1:x_2:x_3] \in T_2$, $\lim_{n \to \infty} \gamma^nx=[0:x_2:x_3]$. Thus $$\omega(\gamma,T_2)=\{[0:x_2:x_3]\in \Pb(\Rb^3) | x_2,x_3>0\}.$$ Thus $\omega(\gamma,T_2)$ is the open projective line segment $(\pi(e_2),\pi(e_3)) \subset \partial T_2$. Also note that $\omega(\gamma,T_2)= E_{\g}^+-\{\pi(e_2),\pi(e_3)\}$, where $E_{\g}^+=\Pb (\Sp\{e_2,e_3\}) \cap \bdry$. Here $\Pb (\Sp\{e_2,e_3\})$ is the projectivization of the direct sum of the eigenspace of $\g$ that correspond to eigenvalues of maximum modulus. This observation that $\omega(\g,T_2) \subset E_{\g}^+$ holds more generally, as we will see in \cref{prop:omega-limit-gamma}.
\end{example}

\begin{remark}
We now compare the notion of $\omega$-limit set with that of the full orbital limit set  introduced in \cite{DGK2017}. Given an infinite discrete subgroup $H \leq \Aut(\Omega)$, the full orbital limit set of $H$ is defined in \cite{DGK2017} as
$$\orblimset(H):= \cup_{x \in \Omega} \left( \overline{H \cdot x} \cap \bdry \right).$$
If $\gamma \in \Aut(\Omega)$ and $\tau_{\Omega}(\g)>0$, then $\{ \g^n : n \in \Nb \}$ is an infinite discrete subsemi-group of $\Aut(\Omega)$. Then $\omega(\gamma,\Omega)$ can be interpreted as the full orbital limit set of the subsemi-group $\{ \g^n : n \in \Nb \}$.
\end{remark}

\subsection{Geometry of $\omega$-limit sets of automorphisms}
\label{subsec:geom-omega-lim-set}

For the rest of this subsection, fix a Hilbert geometry $\Omega \subset \Pb(\Rb^{d+1})$ and $\g \in \Aut(\Omega)$ with $\tau_{\Omega}(\g)>0$. Fix a lift $\wt \g$ of $\g$. Our goal here is \cref{prop:omega-limit-gamma} -- a description of $\omega(\gamma,\Omega)$ using the real Jordan decomposition of $\wt \g$. We first give an intuitive idea. Suppose $c_1,\dots,c_q$ are all the eigenvalues (with repetitions) of $\wt \g$ of modulus $\ev_{\max}(\wt \g)$. If $c_1,\dots,c_q \in \Rb$, then $\omega(\gamma,\Omega)$ is contained in the projective subspace spanned by the eigenvectors corresponding to the eigenvalues of modulus $\ev_{\max}(\wt \g)$. In the notation of \cref{defn:E_g} below, this subspace is precisely $\Pb(E_{\wt \g})$. Now suppose that among the $c_i$-s, there is a complex conjugate pair of eigenvalues $\mu,\overline{\mu} \in \Cb-\Rb$. Then, in the above subspace, we need to replace the eigenvectors  for $\mu$ and $\overline{\mu}$ with a 2-dimensional $\g$-invariant projective real subspace on which $\g$ acts by a rotation ($E_{\mu}$ in the notation of \cref{defn:E_g}). This is the key intuition behind \cref{prop:omega-limit-gamma}. The references for this section are \cite[II.1]{M1991} and \cite[Section 2]{cooper_long}).

Now we start the formal discussion. First we introduce some notation. If $\mu \in \Rb$, define 
\begin{align*}
J_{\mu}:=
\begin{pmatrix}
\mu & 1 & 0 & \dots & 0 \\
0 & \mu & 1 & \dots & 0 \\
\vdots & \vdots & \vdots & \vdots & \vdots \\
0 & \dots &  0 & \dots  &1 \\
0 & \dots &  0 & \dots  &\mu
\end{pmatrix}.
\end{align*}
If $\mu=\alpha + i \beta \in \Cb-\Rb$, define
\begin{align*}
J_{\mu}:=
\begin{pmatrix}
R(\mu) & \id_2 & 0 & \dots & 0 \\
0 & R(\mu) & \id_2 & \dots & 0 \\
\vdots & \vdots & \vdots & \vdots & \vdots \\
0 & \dots &  0 & \dots  & \id_2 \\
0 & \dots &  0 & \dots  &R(\mu)
\end{pmatrix}
\end{align*}
where $\id_2$ is the $2 \times 2$ identity matrix and $R(\mu):=\begin{pmatrix} \alpha & -\beta \\ \beta & \alpha \end{pmatrix}$.

 Consider the real Jordan decomposition of $\wt \g$. This says that there is a $\wt \g$ invariant decomposition $\Rb^{d+1}=V_{\mu_1} \oplus \dots \oplus V_{\mu_n}$ into real vector subspaces and, with an appropriate choice of basis for $V_{\mu_j}$, 
\begin{align*}
\wt \g=
\begin{pmatrix}
 J_{\mu_1} & \dots & 0 \\ \vdots & \vdots & \vdots \\ 0 & \dots & J_{\mu_{n}}
\end{pmatrix}. 
\end{align*}
We remark that $V_{\mu}=V_{\overline{\mu}}$, as conjugate pairs of eigenvalues correspond to the same invariant subspace in the real Jordan decomposition. Without loss of generality, we can assume that $\mu_1,\dots,\mu_{l} \in \Rb$ and $\mu_{l+1},\dots, \mu_{n} \in \Cb-\Rb$. Then $\mu_1,\dots,\mu_{l}$, $\mu_{l+1},\overline{\mu_{l+1}},\dots,\mu_{n}, \overline{\mu_{n}}$ are eigenvalues (possibly with repetitions) of $\wt \g$ over $\Cb$ and the multiplicity of $\mu_i$ is determined  by the Jordan block $J_{\mu_i}$.

Note that $\ev_{\max}(\wt \g)$ and $\ev_{\min}(\wt \g)$ are the maximum and the minimum of the set $\{|\mu_i| : 1\leq i \leq n\} $. Now we focus on the eigenvalues of maximum modulus. By re-indexing the $\mu_i$-s, we now assume that $\mu_1,\dots,\mu_m$ are precisely those $\mu_i$ that satisfy $|\mu_i|=\ev_{\max}(\wt \g)$. We further assume that among them, $\mu_1,\dots,\mu_k \in \Rb$ and $\mu_{k+1},\dots, \mu_m \in \Cb-\Rb$. Then $\mu_1,\dots,\mu_k, \mu_{k+1},\overline{\mu_{k+1}},\dots, \mu_m,\overline{\mu_m}$ are eigenvalues (possibly with repetitions) of $\wt \g$  of modulus $\ev_{\max}(\wt \g)$ and their multiplicities are determined by the Jordan block structure of $\wt \g$.

\begin{definition}
\label{defn:E_g}
\emph{If $\mu_j \in \Rb$, let $E_{\mu_j}$ be the eigenvector for $\wt{\g}$ in $V_{\mu_j}$ with eigenvalue $\mu_j$. If $\mu_j \in \Cb-\Rb$, let $E_{\mu_j}$ be the two-dimensional $\wt \g$-invariant subspace of $V_{\mu_j}$ such that $\wt \g|_{E_{\mu_j}}$ is conjugated to $R(\mu_j)$. Define } 
\begin{align*}
 E_{\T \g} := \bigoplus_{1 \leq j \leq m}  E_{\mu_j}=\bigoplus_{|\mu_j|=\ev_{\max}(\wt \g)}  E_{\mu_j}.
\end{align*}
\label{defn:subspaces_L_and_K}
\emph{We also define} $$L_{\T \g}  := \bigoplus_{|\mu_j|=\ev_{\max}(\T \g)}  V_{\mu_j}   \qquad \text{  and  } \qquad K_{\T \g}  := \bigoplus_{|\mu_j|< \ev_{\max}(\T \g)} V_{\mu_j}.$$
\end{definition}
Then  $\wt \g|_{E_{\wt \g}}$ is conjugated in ${\rm GL}(E_{\wt \g})$ to 
\begin{align}
\ev_{\max}(\wt{\gamma}) \cdot 
\label{eqn:action_of_g_on_Eg}
\begin{pmatrix}
M_k & 0 & \dots &  0 \\
0 & R\left( \frac{\mu_{k+1}}{\ev_{\max(\wt \g)}} \right) & \dots & 0 \\
\vdots & \vdots  & \vdots  & \vdots \\
0 & 0 & 0 & R \left( \frac{\mu_{m}}{\ev_{\max(\wt \g)}} \right)
\end{pmatrix},
\end{align} 
where $M_k$ is a $k \times k$ diagonal matrix with each diagonal entry $\pm 1$. Thus $\langle \wt \g|_{E_{\wt \g}} \rangle$ is conjugated in ${\rm GL}(E_{\wt \g})$ to a cyclic subgroup of 
$\{\pm \id\}^k \times O(E_{\mu_{k+1}}) \times \dots \times O(E_{\mu_m}) < O(E_{\wt \g}).$
Here, $O(W)$ denotes the group of orthogonal transformations preserving a linear subspace $W \subset \Rb^d$. 
\begin{claim}
There exists a sequence $\{m_k\}$ in $\Nb$ with $m_k \to \infty$ such that 
$$\lim_{k \to \infty} \frac{1}{\ev_{\max}(\wt \g)^{m_k}} \left( \wt \g|_{E_{\wt \g}} \right)^{m_k}=\id|_{E_{\wt \g}}.$$
\end{claim}
\begin{proof}[Proof of Claim]
Let $k_\g:=\frac{1}{\ev_{\max}(\wt \g)}\wt \g|_{E_{\wt \g}}$ and $\Kc:=\overline{\langle k_{\g}\rangle}$. By equation \eqref{eqn:action_of_g_on_Eg}, there exists $h \in {\rm GL}(E_{\wt \g})$ such that $h \cdot \Kc \cdot h^{-1}$ is a compact subgroup of $\{\pm \id\}^k \times O(E_{\mu_{k+1}}) \times \dots \times O(E_{\mu_m})$. Thus $\Kc':=h \cdot \Kc \cdot h^{-1}$ is a Lie subgroup of $O(E_{\wt \g})$. Hence either $\id$ is an isolated point of $\Kc'$ or there exists a neighborhood $U$ of $\id$ in $O(E_{\wt \g})$ such that $U\cap \Kc' \subset \Kc'$. 

In the latter case, it is obvious that there exists a monotonic sequence of integers $\{m_p\}$  such that $\lim_{p \to \infty} k_{\g}^{m_p}=\id|_{E_{\wt \g}}.$ Up to passing to a subsequence, we can assume that $m_p \to \infty$ or $m_p \to -\infty$. If $m_p \to \infty$, the claim is proved. Otherwise, choose the sequence $-m_p$. 

In the former case (i.e. when $\id$ is an isolated point of $\Kc'$), it implies $\left( k_{\g} \right) ^s=\id|_{E_{\wt \g}}$ for some $s \in \Nb$. Then $m_p:=sp$ proves the claim. 
\end{proof}

We will now discuss the dynamics of $(\wt \gamma)^n$ on $\Pb(\Rb^{d+1})$. The results are quite standard and the proofs are fairly elementary computations using Jordan blocks, see \cite[II.1]{M1991} or \cite[Lemma 2.5]{cooper_long}) for instance.  

\begin{observation} \
\label{obs:iteration_jordan_blocks}
\begin{enumerate}
\item For a generic point $v \in V_{\mu}$, all accumulation points of  $\left\{ \frac{1}{|\mu|^n}\left( \wt \g|_{V_{\mu}} \right)^n v : n \in \Nb \right\}$ 
lie in $E_{\mu}$.
\item Let $W=V_{\mu_1} \oplus V_{\mu_2}$ and $|\mu_1|>|\mu_2|$. Then, for any $w \in W-V_{\mu_2}$, all accumulation points of 
$\left\{ \frac{1}{|\mu_1|^n}\left(\wt \g|_{W} \right)^n w : n \in \Nb \right\}$ 
lie in $E_{\mu_1}$.
\item Let $W'=V_{\mu} \oplus V_{\mu'}$ with $|\mu|=|\mu'|$.  Then for a generic point $w' \in W'$, all accumulation points of 
$\left\{ \frac{1}{|\mu|^n}\left(\wt \g|_{W} \right)^n w' : n \in \Nb \right\}$ lie in $E_{\mu}$ if $\dim V_{\mu} > \dim V_{\mu'}$. If $\dim V_{\mu} = \dim V_{\mu'}$, then the accumulation points lie in $E_{\mu} \oplus E_{\mu'}$. 
\end{enumerate}
\end{observation}

Recall the notation from \cref{defn:E_g}. Then the above observations imply the following result.
\begin{fact}
\label{rem:iteration_using_jordan_blocks}
 If $w \in \RP \setminus \Pb(K_{\T \g})$, then the accumulation points of $\{\g^n w : n > 0\}$ lie in $\Pb(E_{\T \g})$. In particular, if $w' \in \Pb(L_{\wt \g})$, then all accumulation points of $\{ \g^n w' : n>0\}$ also lie in $\Pb(E_{\wt \g})$.  
\end{fact}
\begin{remark}
\label{rem:attracted_to_most_powerful_jordan}
In fact, a finer conclusion is possible in \cref{rem:iteration_using_jordan_blocks}.  Following \cite{cooper_long}, call a real Jordan subspace $V_{\mu_i}$ \emph{most powerful} if $|\mu_i|=\ev_{\max}(\wt \g)$ and $\dim (V_{\mu_i})=\max\{ \dim (V_{\mu_j}): |\mu_j|=\ev_{\max}(\wt \g)\}$. Let $F_{\wt \g}$ be the direct sum of the $E_{\mu_j}$-s that correspond to the most powerful Jordan subspaces $V_{\mu_j}$. Then, $F_{\wt \g} \subset E_{\wt \g}$. For any $w \in \RP \setminus \Pb(K_{\T \g})$ as above, the accumulation points of $\{\g^n w:n>0 \}$ actually lie in $\Pb(F_{\wt \g})$, see part (3) of the above Observation or \cite[Proposition 2.5]{cooper_long}. We record this finer conclusion for completeness, but we will not need it in this paper.
\end{remark}

\begin{claim} 
\label{claim:about_jordan_subspace_intersections}
$\Omega \cap \Pb(K_{\T \g})=\emptyset$, $\Pb(E_{\wt \g}) \cap \clOm \subset \bdry$ and
$\omega(\gamma,\Omega) \subset \Pb(E_{\wt \g}) \cap \bdry$. 
\end{claim} 
 \begin{proof}[Proof of Claim]
We first note that $\Omega \cap \Pb(K_{\T \g})=\emptyset$. Otherwise,  \cref{rem:translation_length_ineq} implies that 
\begin{align*}
\tau_{\Omega \cap \Pb(K_{\T \g})}(\g)=\log \left( \dfrac{\ev_{\max}(\T \g|_{K_{\T \g}})}{\ev_{\min}(\T \g|_{K_{\T \g}})} \right) < \log \left( \dfrac{\ev_{\max}(\T \g)}{\ev_{\min}(\T \g)} \right) = \tau_{\Omega}(\g),
\end{align*} 
a contradiction. Now suppose, if possible, that $\Pb(E_{\wt \g}) \cap \Omega$ is non-empty. By \cref{rem:translation_length_ineq}, $\tau_{\Omega}(\g) \leq \tau_{\Pb(E_{\wt \g}) \cap \Omega}(\g|_{E_{\wt \g}})=0$, a contradiction. Finally, since $\Omega \cap \Pb(K_{\T \g})=\emptyset$, $\Omega \subset \RP \setminus \Pb(K_{\T \g})$. Then \cref{rem:iteration_using_jordan_blocks} implies that $\omega(\g, \Omega) \subset \Pb(E_{\T \g}).$ Moreover $\omega(\g, \Omega) \subset \bdry$ by definition. 
  \end{proof}
  
 Note that these subspaces in \cref{defn:E_g} as well as the discussion above are independent of the lift $\T \g$ of $\g$ that we fix. Thus we introduce the following definitions.
  
\begin{definition}
\label{defn:E_L_K_plus}
\emph{If $\g \in \Aut(\Om)$, fix some (hence any) lift $\wt \g \in \GL $ of $\g$  that preserves the cone $\wt \Om$ above $\Om$ and define}
\begin{align*}
E_{\g}^+:= \Pb \left( E_{\T \g} \right),~ L_{\g}^+:= \Pb \left( L_{\T \g} \right)~ \text{ and } ~K_{\g}^+:= \Pb \left( K_{\T \g}\right), 
\end{align*}
\emph{where the subspaces $E_{\T \g}$, $L_{\T \g}$ and $K_{\T \g}$ are as in \cref{defn:E_g}. We also define}
\begin{align*}
E_{\g}^-:=E_{\g^{-1}}^+, ~ L_{\g}^-:=L_{\g^{-1}}^+ ~\text{ and }~ K_{\g}^-:=K_{\g^{-1}}^+.
\end{align*}
\end{definition}

\begin{remark}
\label{rem:L_minus}
Note that a linear subspace $V \subset \Rb^{d+1}$ is a real Jordan subspace for $\wt \g$ with eigenvalue $\mu$ if and only if $V$ is a real Jordan subspace for $\wt \g^{-1}$ with eigenvalue $\mu^{-1}$. Indeed, this follows because $\ker \left( \wt \g - \mu \id \right)^k=\ker\left( \wt \g^{-1}-\mu^{-1}\id \right)^k$ for any $k \in \Nb$. 
Thus , if $V_{\mu}$-s are the real Jordan subspaces for $\wt \g$ as above, then 
\[ 
E_{\wt \g^{-1}}=\bigoplus_{|\mu|=\ev_{\min}(\wt \g)}E_{\mu}, ~L_{\wt \g^{-1}}=\bigoplus_{|\mu|=\ev_{\min}(\wt g)} V_{\mu} ~\text{ and }~ K_{\wt \g^{-1}}=\bigoplus_{|\mu| > \ev_{\min}(\wt g)} V_{\mu}.
\]
\end{remark}

The key upshot of the discussion in this subsection is the following proposition. 
\begin{proposition}\label{prop:omega-limit-gamma}
If $\Omega$ is a Hilbert geometry, $\g \in \Aut(\Omega)$ and $\tau_\Om(\g) >0$, then:
\begin{enumerate}
\item $\omega(\g, \Omega) \subset E_{\g}^+$.
\item the action of $\g$ on $E_{\g}^+$ is conjugated into the projective orthogonal group ${\rm PO}(E_{\g}^+)$. 
\item there exists a sequence of positive integers $\{m_k\}$ with $m_k \to \infty$ such that 
\begin{equation*}
\lim_{k \to \infty} \Big( \g \Big|_{E_{\g}^+} \Big)^{m_k}=\id \Big|_{E_{\g}^+}.
\end{equation*}
\end{enumerate}
\end{proposition}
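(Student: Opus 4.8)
The plan is to prove Proposition \ref{prop:omega-limit-gamma} by unpacking the three claims separately, leaning on the detailed Jordan-block analysis that precedes the statement. Part (1), that $\omega(\g,\Omega) \subset E_\g^+$, is essentially already established in the discussion above the proposition: the elementary Jordan-block computation shows that for $w \in \Pb(\Rb^{d+1}) \setminus \Pb(K_{\T\g})$ all accumulation points of $\{\g^n w : n>0\}$ lie in $\Pb(E_{\T\g})$, and I have shown $\Omega \subset \Pb(\Rb^{d+1}) \setminus \Pb(K_{\T\g})$ since $\Omega \cap \Pb(K_{\T\g}) = \emptyset$. Combining this with $\omega(\g,\Omega) \subset \clOm$ and $\Omega \cap \Pb(E_{\T\g}) = \emptyset$ gives $\omega(\g,\Omega) \subset \Pb(E_{\T\g}) \cap \clOm = E_\g^+$. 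So I would simply cite this preceding computation.

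For part (2), I would start from the observation, already noted in the text, that after rescaling $\T\g$ by $\ev_{\max}(\T\g)$, the restricted action of $\T\g$ on $E_{\T\g}$ can be conjugated into the orthogonal group ${\rm O}(E_{\T\g})$. The key point to verify is that $E_{\T\g}$ is a \emph{genuine eigenspace} (not just a generalized eigenspace) for each eigenvalue of maximal modulus, so that $\T\g|_{E_{\T\g}}$ is diagonalizable over $\Cb$ with all eigenvalues of the same modulus $\ev_{\max}(\T\g)$; this is exactly why $E_{\T\g}$ is defined using eigenspaces $E_\ev$ rather than generalized eigenspaces $W_\ev$. Rescaling by $\ev_{\max}(\T\g)$ then produces a diagonalizable map with all eigenvalues of modulus one, which is conjugate to an element of ${\rm O}(E_{\T\g})$. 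Descending to $\RP$, the induced action on $E_\g^+$ lies in ${\rm PO}(E_{\T\g})$.

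For part (3), I would use that the image of $\langle \g|_{E_\g^+} \rangle$ in ${\rm PO}(E_{\T\g})$ generates a relatively compact subgroup, since ${\rm PO}(E_{\T\g})$ is compact and $\g|_{E_\g^+}$ has infinite order considerations do not obstruct taking the closure $\overline{\langle \g|_{E_\g^+} \rangle}$, which is a compact abelian subgroup of ${\rm PO}(E_{\T\g})$. By compactness, the sequence $\{(\g|_{E_\g^+})^n\}_{n>0}$ has a subsequence converging to some limit $T$ in the closure; a standard argument then shows that the identity lies in the closure of the positive powers: choosing $n_k \to \infty$ with $(\g|_{E_\g^+})^{n_k} \to T$, the differences $m_k := n_{k+1} - n_k \to \infty$ satisfy $(\g|_{E_\g^+})^{m_k} \to \id$. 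This recurrence argument is the only place requiring genuine care, so I would state it explicitly.

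The main obstacle I anticipate is the justification in part (2) that $\T\g$ acts semisimply (diagonalizably over $\Cb$) on $E_{\T\g}$ with all maximal-modulus eigenvalues, so that orthogonalizability genuinely holds; this hinges on distinguishing the eigenspace from the generalized eigenspace and is the conceptual heart of the proof. Once semisimplicity on $E_{\T\g}$ is in hand, parts (1) and (3) follow from the preceding analysis and a routine compactness-recurrence argument respectively.
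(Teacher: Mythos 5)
Your proposal follows essentially the same route as the paper: Proposition \ref{prop:omega-limit-gamma} is presented there as a summary of the discussion in Section \ref{subsec:geom-omega-lim-set}, so parts (1) and (2) are, exactly as you say, the preceding Jordan-block computation together with the facts $\Omega \cap \Pb(K_{\T \g}) = \emptyset$ and $\Omega \cap \Pb(E_{\T \g}) = \emptyset$, plus semisimplicity of $\T \g |_{E_{\T \g}}$ (which holds precisely because $E_{\T \g}$ is built from the eigenspaces $E_\ev$ rather than the generalized eigenspaces $W_\ev$); part (3) is handled in the paper by the remark immediately following the proposition, again via compactness. Your formulation of (3) is actually stated more carefully than that remark, which asserts that $\langle \g|_{E_\g^+} \rangle$ is ``either finite or dense in ${\rm PO}(E_{\T \g})$''; what is literally true, and what your argument uses, is that it is dense in its own closure, a compact abelian subgroup of ${\rm PO}(E_{\T \g})$.

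However, one step of your part (3) is wrong as written, though easily repaired: if $(\g|_{E_\g^+})^{n_k} \to T$ with $n_k \to \infty$, the consecutive differences $m_k := n_{k+1} - n_k$ need \emph{not} tend to infinity. For instance, if $\g|_{E_\g^+} = \id$ (which happens whenever $E_{\T \g}$ is one-dimensional, e.g.\ for a loxodromic element), one may take $n_k = k$, and then every difference equals $1$, so your $\{m_k\}$ is bounded and the unboundedness required by the proposition is not established. The fix is standard: first pass to a sparse subsequence $\{n_{k_j}\}$ with $n_{k_{j+1}} - n_{k_j} \geq j$; convergence to $T$ persists along any subsequence, and then $m_j := n_{k_{j+1}} - n_{k_j} \to \infty$ while
\begin{equation*}
\big( \g|_{E_\g^+} \big)^{m_j} = \big( \g|_{E_\g^+} \big)^{n_{k_{j+1}}} \Big( \big( \g|_{E_\g^+} \big)^{n_{k_j}} \Big)^{-1} \to T T^{-1} = \id \big|_{E_\g^+}
\end{equation*}
by continuity of multiplication and inversion in the compact group. (Alternatively, treat the case where $\g|_{E_\g^+}$ has finite order separately, taking $m_k = kN$ with $\g^N|_{E_\g^+} = \id$.) With that one-line patch your proof is complete and matches the paper's.
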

\begin{remark}  A similar proposition is true if we replace $\g$ by $\g^{-1}$ and $E_\g^+$ by $E_\g^-$. Moreover, it is possible that $\omega(\gamma,\Omega) \subsetneq E_{\g}^+ \subset \bdry$, see \cref{example:omega_limit_in_T2}. We finally remark that a finer conclusion is possible here: $\omega(\g,\Omega) \subset \Pb(F_{\wt \g}) \subset E_{\g}^+$, where $F_{\wt \g}$ is as defined in \cref{rem:attracted_to_most_powerful_jordan}. We will not need this finer conclusion, but we record it for completeness.
\end{remark}

\subsection{$\omega$-limit sets and faces in a properly convex domain }

We continue our discussion about $\omega$-limit sets from the previous subsection. Our goal now is to prove a result about the faces $F_{\Omega}(x)$ for  $x  \in E_\g^{\pm}.$  This result will be used in Section \ref{sec:contracting-implies-rank-one}. Before formulating the precise result, we give an illustrative example.

\begin{illustrative_example}
Let $g=\diag(\ev_1,\ev_2,\ev_3)$ where $\ev_1>\ev_2>\ev_3>0$ and  let $g$ preserve a properly convex domain $\Omega \subset \Pb(\Rb^3)$. Suppose $\pi(e_3) \in \bdry$ and let $F:=F_{\Omega}(\pi(e_3))$. We will show that $\pi(e_2) \not \in F$. Suppose, on the contrary, that $\pi(e_2) \in F$. Then $I_t:=[\pi(e_3-t e_2),\pi(e_3+te_2)] \subset F$ for some $t>0$. It is an elementary observation that $I_t$ gets expanded by the action of $g$ and $\overline{\cup_{k=1}^{\infty} g^k I_t}=\Pb(\Sp \{e_2,e_3\})$. Thus $\Pb(\Sp\{e_2,e_3\}) \subset \overline{F}  \subset \clOm$, which contradicts that $\Om$ is a properly convex domain. Thus $\pi(e_2) \not \in F$. By a similar reasoning $\pi(e_1) \not \in F$. 

The takeaway from this example should be the following: since $\pi(e_3)$ is an eigenvector corresponding to an eigenvalue of modulus $\ev_{\min}(g)$, the corresponding face $F_{\Om}(\pi(e_3))$ cannot intersect any eigenspace whose eigenvalue has modulus greater that $\ev_{\min}(g).$ The above philosophy works even if we replace eigenspaces by Jordan blocks and is the key idea behind the next result.
\end{illustrative_example}

We now state the precise version of the result. Recall the notation  $L_{\g}^{-}$ from previous section (cf. \cref{defn:E_L_K_plus} and \cref{rem:L_minus}): for any $\g \in \Aut(\Omega)$, $$L_{\g}^-= \Pb \left( \oplus_{|\mu|=\ev_{\min}(\wt \g)} V_{\mu} \right).$$
As in the previous subsection,  $\wt \g$ is some (hence any) lift of $\g$ and $V_{\mu}$ is the real Jordan subspace of $\wt \gamma$ for the eigenvalue $\mu$. Thus $L_{\g}^-$ is the direct sum of all the Jordan subspaces corresponding to the eigenvalues of $\wt \g$ of minimum absolute value.

\begin{lemma}
\label{lem:face_in_Lminus}
Suppose $\Omega$ is a Hilbert geometry and $\g \in \Aut(\Om)$ with $\tau_{\Omega}(\g) >0.$ If $y \in E_{\g}^-$, then $F_{\Omega}(y) \subset L_{\g}^-$.
\end{lemma}
\begin{proof}
Suppose, for a contradiction, that $v \in F_{\Omega}(y) - L_{\g}^-$. Fix a lift $\wt \g$ of $\g$. Since $y\in E_{\g}^-$, \cref{prop:omega-limit-gamma} part (3) implies that we can find a sequence $\{d_k\}$ of positive integers with $d_k \to  \infty$ such that $\left( \g|_{E_{\g}^-} \right)^{d_k} \to \id|_{E_{\g}^-}$.  

Up to passing to a subsequence of $\{d_k\}$, we can assume that $\g^{d_k} v \to v_{\infty} \in \overline{\Om}$. As $v \not \in L_{\g}^-$, \cref{obs:iteration_jordan_blocks} part (2) implies that there exists $c>\ev_{\min}(\wt \g)$ such that the accumulation points of $\left\{ \left( \frac{\wt \g}{c} \right) ^{d_k} v : k \geq 1 \right\}$ do not lie in $L_{\g}^-$. Thus $v_{\infty} \not \in L_{\g}^-$ and $\lim_{k \to \infty} \left( \dfrac{c}{\ev_{\min}} \right)^{d_k} = \infty$. We can then fix lifts $\wt y$, $\wt v$ and $\wt v_{\infty}$  such that
\[
\left( \dfrac{\wt \g}{\ev_{\min}(\wt \g)} \right)^{d_k} \wt y \to \wt y ~~\text{ and }~~ \left( \dfrac{\wt \g}{c} \right)^{d_k} \wt v \to \wt v_{\infty}.
\]
We claim that 
\[\Pb(\Sp\{y,v_{\infty}\}) \subset \clOm.\]
To prove this claim, it suffices to show that $\pi(\wt y+t \wt v_{\infty}) \in \clOm$ for any real number $t \neq 0$. Fix $0 \neq t \in \Rb$. Define $$s_k:=t \cdot \dfrac{\ev_{\min}^{d_k}}{c^{d_k}+t \ev_{\min}^{d_k}}.$$ Then $s_k \to 0$ as $k \to \infty$. In fact, for $k$ large enough, $s_k$ belongs to $(0,1)$ or $(-1,0)$ accordingly as $t>0$ or $t<0$. Set 
$$w_k:=\pi((1-s_k) \wt{y}+s_k \wt{v} )=\pi\left( \wt y + \dfrac{s_k}{1-s_k} \wt v \right)=\pi \left( \wt y + t \dfrac{\ev_{\min}^{d_k}}{c^{d_k}} \wt v \right),$$ 
since $\frac{s_k}{1-s_k}=t \dfrac{\ev_{\min}^{d_k}}{c^{d_k}}.$  Then $w_k \in \Pb(\Sp \{y,v\})$ and $\lim_{k \to \infty}w_k = y$. Thus, for $k$ large enough, $w_k \in F_{\Om}(y) \cap \Pb(\Sp \{y,v\})$ because $v \in F_{\Om}(y)$. Moreover, $w_k$ lies on opposite sides of $y$ in $F_{\Om}(y) \cap \Pb(\Sp \{y,v\}) $ accordingly as $t>0$ or $t<0$. Thus the following computation will show that $\g^{d_k}$ expands small neighborhoods of $y$ in $\Pb(\Sp \{y,v\}) \cap F_{\Om}(y)$ to large subintervals of the projective line $\Pb(\Sp\{y,v_{\infty}\})$. More precisely, we observe that
\begin{align*}
\lim_{k\to \infty} \g^{d_k} w_k &=\lim_{k \to \infty}\pi \left( (1-s_k)\frac{\wt \g^{d_k}}{\ev_{\min}(\wt \g)^{d_k}}\wt y + s_k \dfrac{\wt \g^{d_k}}{\ev_{\min}^{d_k}} \wt v  \right)  =\lim_{k \to \infty}\pi \left( \frac{\wt \g^{d_k}}{\ev_{\min}(\wt \g)^{d_k}}\wt y + \dfrac{s_k}{1-s_k}\dfrac{c^{d_k}}{\ev_{\min}^{d_k}} \dfrac{\wt \g^{d_k}}{c^{d_k}}\wt v \right) \\
&=\lim_{k \to \infty} \pi \left( \frac{\wt \g^{d_k}}{\ev_{\min}(\wt \g)^{d_k}}\wt y+t \frac{\wt \g^{d_k}}{c^{d_k}}\wt v \right)=\pi(\wt y + t \wt v_{\infty}).
\end{align*} 

Thus $\pi(\wt y+t \wt v_{\infty}) \in \clOm$ since $w_k \in F_{\Om}(y)$ for $k$ large enough. Since $t \neq 0$ is arbitrary, $\Pb(\Sp\{y,v_{\infty}\}) \subset \clOm.$ This proves the claim.

However, if $\clOm$ contains the non-trivial projective line $\Pb(\Sp\{y,v_{\infty}\})$, then $\Om$ cannot be properly convex.  This is a contradiction.
\end{proof}

\begin{corollary}\label{no-face-shared-by-E+-E-}
Suppose $\Om \subset \RP$ is a Hilbert geometry and $\g \in \Aut(\Om)$ with $\tau_{\Omega}(\g) >0.$ 
\begin{enumerate}
\item If $y \in E_\g^-$, then $F_{\Om}(y) \cap E_\g^+ = \emptyset.$ 
\item If $ y \in E_\g^-$, $z \in F_{\Omega}(y)$ and $\{i_k\}$ is a sequence in $\Zb$ such that $z_\infty:=\lim_{k \to \infty} \g^{i_k}z$ exists, then $z_{\infty} \in E_{\g}^-$.
\end{enumerate}
\end{corollary}
\begin{proof}
By \cref{lem:face_in_Lminus}, $F_{\Omega}(y) \subset L_{\g}^-$. Since $\tau_{\Omega}(\g)>0$, $L_{\g}^- \cap E_{\g}^+$ is empty by definition and this proves the first part. For the second part, note that  $z \in F_{\Omega}(y)$ implies that $z \in L_{\g}^-$. On $\Sp(L_{\g}^-)$, all eigenvalues of $\wt \g$ have the same modulus $\ev_{\min}(\wt \g)$. Then \cref{obs:iteration_jordan_blocks} part (3) implies that all accumulation points of $\{ \g^n z : n \in \Nb\}$ lie in $E_{\g}^-$. By a similar reasoning, all accumulation points of $\{ \g^{-n} z : n \in \Nb\}$ also lie in $E_{\g}^-$. This proves the second part. 
\end{proof}

\begin{remark}
\label{rem:analogues_of_face_results}
Analogues of \cref{lem:face_in_Lminus} and \cref{no-face-shared-by-E+-E-} hold for $F_{\Omega}(x)$ where $x \in E_\g^+$. One has to replace $\g$ with $\g^{-1}$ to obtain the analogous results.
\end{remark}

\part{Rank one Hilbert geometries}
\label{part:rank-one-hil-geom}

\section{Axis of Isometries}
\label{sec:axis}

\begin{definition}
\label{defn:axis}
Suppose $\Omega \subset \RP $ is a Hilbert geometry and $g \in \Aut(\Omega)$. An \emph{axis} of $g$ is a non-trivial projective line segment $\ell_g:=\Pb(V_g) \cap \Om$ where $V_g \leq \Rb^{d+1}$ is a two-dimensional $g$-invariant linear subspace.
\end{definition}

We will show that if $g$ has an axis and $\tau_{\Omega}(g)>0$, then $g$ acts by a translation along its axis $\ell_g$ and the endpoints of $\ell_g$ correspond to eigenvectors with eigenvalues of maximum and minimum modulus respectively. Recall the notation $E_g^+, E_g^-\subset \RP$ from \cref{defn:E_L_K_plus}.
\begin{lemma}
\label{lem:axis_inside}
Suppose $\Omega \subset \RP$ is a Hilbert geometry, $g \in \Aut(\Om)$ with $\tau_{\Omega}(g)>0$ and $g$ has an axis $\ell_g=\Pb(V_g) \cap \Omega$.  If $\wt g$ is a lift of $g$ in $\GL$, then:
\begin{enumerate}
\item $\wt g|_{V_g}$ has two distinct eigenvalues $\lambda_+ > \lambda_-$,
\item there exist $\wt{g_+}, \wt{g_-} \in \Rb^{d+1}$ such that $\wt g  \cdot \wt{g_{\pm}}=\ev_{\pm} \cdot \wt{g_{\pm}}$
and $\ell_g=(g_+,g_-)$ where $g_{\pm}=\pi(\wt{g_{\pm}})$,
\item $|\ev_+|=\ev_{\max}(\wt g)$, $|\ev_-|=\ev_{\min}(\wt g)$, $\tau_{\Omega}(g)= \log \left( \left| \dfrac{\ev_+}{\ev_-} \right| \right)>0$,
\item $g_+ \in E_g^+$ and $g_- \in E_g^-$.
\end{enumerate}
\end{lemma}
\begin{remark}
If the lift $\wt g$ preserves the cone $\wt \Om$ above $\Om$ and $\wt{g_{\pm}} \in \wt \Om$, then $\ev_{\pm}>0$, see \cref{lem:lift_has_pos_eigenvalues}. Then $\ev_{+}=\ev_{\max}(\wt g)$ and $\ev_-=\ev_{\min}(\wt g)$.
\end{remark}
\begin{proof}
Let $\ell_g=(a,b)$. Note that $g$ preserves the set  $\{a,b\}=\ell_g \cap \bdry$. Fix any lift $\wt{g}$ of $g$. In the basis $\{\Rb \cdot a,\Rb\cdot b\}$ of $V_g$, there exist $c_1,c_2 \neq 0$ such that $\wt g|_{V_g}$ is either 
$$\begin{pmatrix}
c_1 & 0 \\ 0 & c_2
\end{pmatrix} ~~\text{ or }~~ \begin{pmatrix}
0 & c_1 \\ c_2 & 0
\end{pmatrix}.$$
In the latter case, both eigenvalues of $\wt g|_{V_g}$ have the same modulus and $\tau_{\Omega \cap \Pb(V_g)}(g|_{\Pb(V_g)})=0$, by Proposition \ref{prop:translation_length}. But then \cref{rem:translation_length_ineq} implies $$0\leq \tau_{\Omega}(g) \leq \tau_{\Omega \cap \Pb(V_g)}(g|_{\Pb(V_g)})=0,$$ a contradiction. Thus we are in the former case and $g$ is diagonalizable with eigenvalues $c_1$ and $c_2$. Note that $c_1 \neq c_2$, since otherwise the same reasoning as above implies that $\tau_{\Omega}(g)=0$. Then set $\ev_+:=\max\{c_1,c_2\}$ and $\ev_-:=\min \{c_1,c_2\}$ and this proves part (1). For part (2), let $\wt{g_{\pm}}$ be the eigenvectors of  $\wt g$ in $V_g$ with eigenvalues $\ev_{\pm}$. Then note that by previous discussion, the set $\{\pi(\wt{g_+}),\pi(\wt{g_-})\}$ equals the set $\{a,b\}$. Thus $\ell_g=(\pi(\wt{g_+}),\pi(\wt{g_-}))$ and $\wt g|_{V_g}=\diag(\ev_+,\ev_-)$ in this basis.

For part (3), first note that \cref{rem:translation_length_ineq} implies $\tau_{\Om}(g)  \leq \tau_{\Om \cap \Pb(V_g)}\left(g\big|_{\Omega \cap \Pb(V_g)}\right)$. Proposition \ref{prop:translation_length} then implies that $\log \dfrac{\ev_{\max}}{\ev_{\min}} (\wt{g})  \leq \log \left| \dfrac{\ev_+}{\ev_-} \right|. $ Since $|\ev_+| \leq \ev_{\max}(\wt{g})$ and $|\ev_-| \geq \ev_{\min}(\wt{g})$, then $\left| \frac{\ev_+}{\ev_-} \right| \leq \frac{\ev_{\max}}{\ev_{\min}} (\wt{g})$. Thus
\begin{align*}
\left| \dfrac{\ev_+}{\ev_-} \right|=  \dfrac{\ev_{\max}}{\ev_{\min}} (\wt{g}).
\end{align*} 
Then, $|\ev_+|=|\ev_-|\cdot \dfrac{\ev_{\max}}{\ev_{\min}}(\wt{g}) \geq \ev_{\max}(\wt{g})$,
which implies that $|\ev_+|=\ev_{\max}(\wt{g})$. Similarly, $|\ev_-|=\ev_{\min}(\wt{g})$. This proves part (3). Then part (4) follows by definition of $E_g^+$ and $E_g^-$.
\end{proof}

\begin{corollary}
\label{cor:biproximal_with_axis_implies_unique_axis}
Suppose $g \in \Aut(\Om)$ with $\tau_{\Om}(g)>0$ and $g$ has an axis. If $\#(E_g^+)=\#(E_g^-)=1$, then $g$ has a unique axis given by $(E_g^+,E_g^-) \subset \Om$. In particular, if $g$ is biproximal (cf. \cref{sec:proximality}) and has an axis, then the axis of $g$ is unique. 
\end{corollary}
\begin{proof}
Immediate from \cref{lem:axis_inside} parts (2) and (4) and the hypothesis that $\#(E_g^+)=\#(E_g^-)=1$. For the in particular part, it suffices to note that if $g$ is biproximal, then $\#(E_g^+)=\#(E_g^-)=1$. 
\end{proof}
\begin{remark}
Although biproximality of $g$ implies $\#(E_g^+)=\#(E_g^-)=1$, its converse fails in general. For example, consider $g=\begin{pmatrix} 0.25 & 0 & 0 \\ 0 & 2 & 1 \\ 0 & 0 & 2 \end{pmatrix}$. However, we will show later in \cref{lem:axis_implies_proximal} that if $g$ has an axis, then $g$ is biproximal $\iff$ $\#(E_g^+)=\#(E_g^-)=1$. 
\end{remark}

An isometry $g \in \Aut(\Omega)$ may not have an axis (cf. \cref{ex:triangle} part (B) below). Hence we introduce the notion of a pseudo-axis.
\begin{definition} 
Suppose $\Omega \subset \RP$ is a Hilbert geometry and $g \in \Aut(\Om)$. A \emph{pseudo-axis} of $g$ is a non-trivial projective line segment $\sigma_g :=\Pb(W_g) \cap \clOm$ where $W_g \leq \Rb^{d+1}$ is a two-dimensional $g$-invariant linear subspace such that  $\Pb(W_g) \cap \Om =\emptyset$.
\end{definition}

\begin{observation}
\label{obs:axis}
If $\tau_{\Omega}(g)>0$, then either $g$ has an axis or a pseudo-axis.
\end{observation}

This observation is immediate from the following result of Benoist (also see \cite[Proposition 2.2]{marquis_survey}). Here $\T \Om$ denotes a cone above $\Om$ (cf. \ref{defn:cone_over_om} and the remark that follows).

\begin{proposition}[{\cite[Lemma 3.2]{benoist_cd3}}]
\label{prop:pseudo_axis}
Suppose $\Omega$ is a Hilbert geometry, $g \in \Aut(\Omega)$ and $\tau_{\Omega}(g)>0$. Let $\wt{g}$ be a lift of $g$ that preserves $\T \Om$. Then $\wt g$ has a real positive eigenvalue that equals $\ev_{\max}(\wt g)$ and there exists $v$ such that $\wt g \cdot v = \ev_{\max}(\wt g) \cdot v$ and $\pi(v) \in \clOm$. A similar result holds if we replace $\ev_{\max}(\wt g)$ by $\ev_{\min}(\wt g)$. 
\end{proposition}

\begin{remark}
If $\wt g$ is an arbitrary element of $\GL$, then $\ev_{\max}(\wt g)$ doesn't have to be an eigenvalue of $\wt g$. In fact, $\wt g$ may only have complex non-real eigenvalues of modulus $\ev_{\max}(\wt g)$. So the key point of the above proposition is that preserving the cone $\wt \Om$ above $\Om$ imposes a strong restriction, namely that $\wt g$ has a positive real eigenvalue that equals $\ev_{\max}(\wt g)$. 

However, the proposition does not imply anything about the number (or nature) of the other  eigenvalues whose modulus is $\ev_{\max}(\wt g)$. In \cref{ex:triangle} part (A), the matrix $g_2^{-1}$ has a repeated eigenvalue $\frac{1}{\ev_2}$ of maximum modulus. Moreover $\wt g$ can have complex eigenvalues of modulus $\ev_{\max}(\wt g)$, see \cref{ex:cone_over_H2}.
\end{remark}

We will now discuss a few examples to illustrate the notions introduced. An isometry may have a unique axis, infinitely many axes, or no axes at all. An isometry can have pseudo-axes without having an axis and vice versa.

\begin{example}
\label{ex:axis-H2}
\emph{(Unique axis, no pseudo-axes)}
Consider the Hilbert geometry $\Om:=\{[x:y:1] | x^2+y^2<1 \}$ in $\Pb(\Rb^3)$. It is the projective model of $\Hb^2$ and $\Aut(\Om)={\rm PO}(2,1)$. If $g \in \SO(2,1)$ has $\tau_{\Om}([g])>0$ (i.e. $g$ is a hyperbolic isometry in $\Isom(\Hb^2)$), then $[g]$ has a unique axis. 
\end{example}

\begin{example} 
\label{ex:triangle}
Consider the two-dimensional simplex $T_2:=\{[x_1:x_2:x_3] | x_1, x_2, x_3>0 \}$. 
\begin{enumerate}[label=(\Alph*)]
\item \emph{(Uncountably many axes, several pseudo-axes)} Let $g_2 :=[ \diag(\ev_1,\ev_2,\ev_2)]$ where $\ev_1 > \ev_2 > 0$. For $0 < t < 1$, let $Q_t:=([e_1],[te_2+(1-t)e_3])$.  Then, $\{Q_t\}_{t\in (0,1)}$ is an uncountable family of axes of $g_2$. There are three pseudo-axes: $[e_1,e_2]$, $[e_2,e_3]$, and $[e_1,e_3]$. 
\item \noindent \emph{(Several pseudo-axes, no axis)} Let $g_1 := [\diag(\ev_1,\ev_2,\ev_3)]$ where $\ev_1 > \ev_2 > \ev_3>0$. The pseudo-axes of $g_1$ are $[e_1,e_2]$,  $[e_2,e_3]$ and $[e_1,e_3]$. But $g_1$ does not have an axis.
\end{enumerate}
\end{example}

\begin{example}
\label{ex:cone_over_H2}
Let $\Om_2 \subset \Pb(\Rb^3)$ be the projective disk model of $\Hb^2$ and fix a cone $\wt \Om_2$ over $\Om_2$. Define $\Om_*:=\{ [v:x] \in \Pb(\Rb^4) : v \in \wt \Om_2, x>0\}$, i.e. $\Om_* \subset \Pb(\Rb^4)$ is the properly convex domain obtained by the join of $\Om_2$ with a point. Let $g:=\begin{bmatrix} \ev A & 0 \\ 0 & 1/\ev^3 \end{bmatrix} \in \Aut(\Om_*)$ where $\ev>1$ and $A=\begin{bmatrix} \cos(\theta) & -\sin(\theta) & 0 \\ \sin(\theta) & \cos(\theta) & 0 \\ 0 & 0 & 1\end{bmatrix} \in {\rm SO}(2,1).$ Then $g$ has three eigenvalues $\ev,\ev e^{\pm i \theta}$ of maximum modulus. 

\noindent Note that $g$ has an axis $\ell_g:=(\pi(e_3),\pi(e_4)) \subset \Om_*$. The action of $g$ is by a translation along $\ell_g$ and a rotation (by angle $\theta$) around $\ell_g$. The axis $\ell_g$ is contained in properly embedded triangles in $\Om_*$. 
\end{example}

\subsection{Three Key Lemmas}
We conclude this section by establishing three lemmas that will be used in the next section. The first one is a consequence of Lemma \ref{lem:axis_inside}.

\begin{lemma}
\label{evector_in_bdry_2}
Suppose $\Omega \subset \RP$ is a Hilbert geometry, $g \in \Aut(\Om)$ with $\tau_{\Om}(g)>0$ and $a,b$ are fixed points of $g$ with $a \in E_g^+$ and $b\in E_g^-$. If $c$ is a fixed point of $g$ such that $c \in\clOm -( E_g^+ \cup E_g^-)$, then $[a,c] \cup [b,c] \subset \bdry$.
\end{lemma}
\begin{proof}
First observe that $c \in \bdry$. Otherwise, $\tau_{\Omega}(g)=d_{\Omega}(c,gc)=0$, a contradiction. 
Suppose $(a,c) \subset \Omega$. Then $(a,c)$ is an axis of $g$ with $a \in E_g^+$. Lemma \ref{lem:axis_inside} then implies that $c \in E_g^-$, a contradiction. Thus $[a,c] \subset \bdry$. A similar reasoning implies that $[c,b] \subset \bdry$.
\end{proof}

The next lemma shows that if $g \in \Aut(\Om)$, $\tau_{\Om}(g)>0$, $g$ has an axis $(a,b)$ and $\#(E_g^+)>1$, then  $F_{\Om}(a)$ contains a non-trivial projective line segment in $\bdry$. Before formulating the precise result and its proof, let us give an intuitive explanation of the main idea. Suppose $u \neq a \in E_g^+$ and let $\xi$ be a point in $(a,b) \subset \Om$. As $\Om$ is open, we can find a point $\xi' \in \Om \cap \Pb(\Sp\{\xi,u\})$ that is distinct from $\xi$. Then, up to extracting a suitable subsequence of $\{ g^n \}$, $g^{n_k}\xi \to a$ while $g^{n_k} \xi' \to a'$. As $\xi\neq \xi'$ and $u,a \in E_g^+$, one can check that $a'\neq a$ (see the proof below). Then a property of the Hilbert metric (\cref{line-in-bdry-0}) implies that $a' \in F_{\Om}(a)$. This is the gist of the proof below.

\begin{lemma} \label{evector_in_bdry_1}
Suppose $\Omega \subset \RP$ is a Hilbert geometry, $g \in \Aut(\Om)$ with $\tau_{\Om}(g)>0$ and $g$ has an axis $(a,b)$ where $a \in E_g^+$ and $b \in E_g^-$. If $u \in E_{g}^+ \setminus \{a\}$, then there exist $x_u^- \neq x_u^+ \in \bdry$ such that $a \in (x_u^-,x_u^+)$ and 
\begin{align*}
F_{\Om}(a) \cap \Pb(\Sp\{a,u\}) =(x^-_{u},x^+_{u}).
\end{align*}
\end{lemma}

\begin{remark} Suppose we have the same setup as \cref{evector_in_bdry_1}. By a similar reasoning, if $v \in E_g^-\setminus \{b\}$, then $F_{\Om}(b) \cap \Pb(\Sp\{b,v\})=(x_v^-,x_v^+)$ where $x_v^- \neq x_v^+$. 
\end{remark}

\begin{proof}
Let us fix a cone $\T \Om$ over $\Om$. Then, we fix lifts $\T g, \T a, \T b, \T u$ of $g, a, b, u$ such that $\T a, \T b, \T u \in \T \Om$ and $\T g \cdot \T \Om =\T \Om$. Note that $\T a$ is an eigenvector of $\T g$ corresponding to the eigenvalue $\evmax(\T g)$ or $-\evmax(\T g)$. Since $\T g$ preserves $\T \Om$, \cref{lem:lift_has_pos_eigenvalues}  implies that $\T g \cdot \T a = \evmax(\T g) \cdot \T a.$ Similarly, $\T g \cdot \T b=\evmin(\T g) \cdot \T b$. Since $u \in E_{g}^+$, Proposition \ref{prop:omega-limit-gamma} part (3) implies that there exists an unbounded sequence of positive integers $\{m_k\}$  such that 
\begin{align*}
\left( \dfrac{\T g}{\evmax(\T g)} \right)^{m_k} \T u= \T u.
\end{align*}
For $t \in \Rb$, let $\T{p}_t:=\dfrac{\T{a}+\T{b}}{2}+t \T{u}$ and $p_t:=\pi(\T p_t)$. Since $(a,b)$ is an axis, $p_0 \in \Om$. Then, as $\Om$ is an open set, there exists $\e_0>0$ such that $\T{p_t} \in \T{\Om}$ for all $t \in (-\e_0,\e_0).$  Fix $t \in (-\e_0,\e_0)$. Then
\begin{align*}
\lim_{k \to \infty} g^{m_k}p_t &=\lim_{k \to \infty} \pi \Bigg( \Big( \dfrac{\T{g}}{\evmax(\T g)}\Big) ^{m_k} \T{p}_t \Bigg)= \lim_{k \to \infty} \pi \Bigg( \dfrac{\T{a}}{2} +  \Big( \dfrac{\evmin(\T g)}{\evmax(\T g)} \Big)^{m_k}\dfrac{\T{b}}{2} + t \Big( \dfrac{\T{g}}{\evmax(\T g)}\Big)^{m_k} \T{u} \Bigg) \\
&=\pi \Big( \T a + 2t  \T u \Big) \in \clOm.
\end{align*}
Then, $\lim_{k \to \infty} g^{m_k}p_0=a$ and  $\lim_{k \to \infty} g^{m_k}p_t \neq a$ whenever $t \neq 0$. By  Proposition \ref{line-in-bdry-0}, 
\[
\lim_{k \to \infty} g^{m_k}p_t \in F_{\Omega}(a)
\] because $\lim_{k\to \infty}\hil(g^{m_k}p_0,g^{m_k}p_t)=\hil(p_0,p_t)$. 
Thus there exist $x_u^+ \neq x_u^- \in \bdry$ such that $F_{\Om}(a) \cap \Pb(\Sp\{a,u\})) =(x^-_{u},x^+_{u})$.
\end{proof}

The next lemma shows that if $\g \in \Aut(\Om)$ has an axis and $\# (E_{\g}^-)=1$, then $\g^{-1}$ is a proximal element in $\PGL$ (see \cref{sec:proximality}). Before stating the precise version of the result, we give an illustrative example to explain the main idea behind it.

\begin{illustrative_example} Let $\mu>\ev>0$. Suppose $g=\begin{pmatrix}\mu & 0 & 0 \\ 0 & \ev & 1 \\ 0 & 0 & \ev \end{pmatrix}$ preserves $\Om \subset \Pb(\Rb^3)$ and $\pi(e_1),\pi(e_2) \in \bdry$. Here $g$ satisfies $\#(E_g^-)=1$ but $g^{-1}$ is not proximal. The main takeaway from this example will be that such a matrix $g$ cannot have an axis in $\Om$, i.e. the only candidate for an axis, namely $(\pi(e_1),\pi(e_2))$, cannot lie in $\Om$. 

To proceed, we will first explain that $\pi(e_3)$ cannot lie in $\clOm$. For this, first note that $$g^{\pm k}\pi(e_3)=\pi(k\ev^{k-1}e_2 + \ev^ke_3).$$ Hence $g^{\pm k} \pi(e_3) \to \pi(e_2)$ as $k \to \infty$, but they approach $\pi(e_2)$ from `opposite directions' in the projective line $\Pb(\Sp \{ e_2,e_3\})$. That is, $g^{\pm k}$ `wraps' $[g^{-1}\pi(e_3),g \pi(e_3)]$ around $\Pb(\Sp \{ e_2,e_3\})$. Then, $\pi(e_3) \in \clOm$ will imply that $\Pb(\Sp \{ e_2,e_3\}) \subset \clOm$, which is a contradiction as $\Om$ is a properly convex domain. Thus $\pi(e_3) \not \in \clOm$.

Now we revisit our basic proposition: $(\pi(e_1),\pi(e_2))$ cannot lie in $\Om$. Suppose this is false and $(\pi(e_1),\pi(e_2)) \subset \Om$. Since $g^k(\pi(e_1),\pi(e_3)) \to (\pi(e_1),\pi(e_2))$,  we can find $\pi(y_k) \in \Om \cap (\pi(e_1),\pi(e_3))$ such that $g^k\pi(y_k)$ converges to the midpoint of $(\pi(e_1),\pi(e_2))$. Now unless $\pi(y_k) \to \pi(e_3)$, one can use the action of $g$ to show that $g^k\pi(y_k) \to \pi(e_1)$, a contradiction (see the computation in \cref{eqn:p_equals_a}). Thus $\pi(y_k)\to \pi(e_3)$ and hence $\pi(e_3) \in \clOm$. This contradicts the previous paragraph.
\end{illustrative_example}

The argument discussed above is the gist of the proof below. We now precisely formulate and prove our result.

\begin{lemma}
\label{lem:axis_implies_proximal}
Suppose $\Om \subset \RP$ is a Hilbert geometry, $\g \in \Aut(\Om)$ with $\tau_{\Om}(\g)>0$ and $\g$ has an axis. If $\# (E_{\g}^-)=1$, then $\left| \dfrac{\ev_d}{\ev_{d+1}}(\g) \right|>1$. 
\end{lemma}
\begin{remark}
Similar reasoning with $\g$ replaced by $\g^{-1}$ implies: if $\#(E_{\g}^+)=1$, then $\left| \frac{\ev_1}{\ev_{2}}(\g) \right|>1$.
\end{remark}

\begin{proof}
Suppose the axis of $\g$ is $(a,b)$ with $a \in E_{\g}^+$ and $b \in E_{\g}^-$. Let us fix $\T \Om$, a cone above $\Om$. Fix lifts $\T \g$, $\T a$ and $\T b$ where $\T a, \T b \in \T \Om$ and $\T \g \cdot \T \Om=\T \Om$. Set $\evmax:=\evmax(\T\g)$ and $\evmin:=\evmin(\T\g)$. Since $b \in E_\g^-$ is a fixed point and $\T b \in \T \Om$, \cref{lem:lift_has_pos_eigenvalues} implies that $\T \g \cdot \T b = \evmin \cdot ~\T b.$ Similarly, $\T \g \cdot \T a = \evmax \cdot ~\T a.$

Since $\#(E_{\g}^-)=1$, there is a one-dimensional eigenspace of $\wt \g$  (namely $\Rb \T b$) and a single Jordan block $J_{\min}$ corresponding to eigenvalues of modulus $\evmin$ (immediate from the definition, cf. \ref{defn:E_L_K_plus}). Thus in order to prove $\left|\dfrac{\ev_d}{\ev_{d+1}}(\g) \right|>1$, it is enough to show that the Jordan block $J_{\min}$ has size 1. Suppose this is false. Then there exists $\T w \in \Rb^{d+1}$ such that if $k\in \Zb$, then
\begin{align}
\label{eqn:gamma-k-w}
\wt{\g}^k\wt{w}=k\ev_{\min}^{k-1}\wt{b}+\ev_{\min}^k \wt{w}. 
\end{align}
Setting $w:=\pi(\wt{w})$, $\lim_{k \to \infty} \g^k w = b$. Since $\g^ka=a$ for all $k$, $\lim_{k \to \infty} \g^k[a,w] = [a,b]$. Fix $p \in (a,b) \subset \Omega$. Then there exist  $y_k \in (a,w)$ such that 
\begin{align}
\label{eqn:yk-to-p}
\lim_{k \to \infty} \g^ky_k = p.
\end{align}
Since $p \in \Omega$ and $\Omega$ is open, $\g^ky_k \in \Omega$ for $k$ large enough. Thus, up to truncating finitely many terms of the sequence $\{y_k\}$, we can assume that for $k \geq 1$,  $$y_k \in (a,w)\cap \Omega.$$ We can fix lifts $\T y_k$ of $y_k$ in $\T \Om$ such that 
\begin{align}
\label{eqn:yk-ck-dk}
\wt y_k=c_k\wt{a}+d_k\wt{w}
\end{align} 
where $c_k,d_k \in [0,1]$. Thus, up to passing to a subsequence, we can assume that $c_\infty:=\lim_{k \to \infty} c_k$ and $d_\infty:=\lim_{k \to \infty} d_k$ exist. Then $\T{y_{\infty}}:=\lim_{k\to\infty} \T y_k$ exists and we set 
\begin{align*}
y_\infty:=\pi(\T{y_\infty})=\pi(c_\infty \T a + d_\infty \T w).
\end{align*}
We now claim that $y_\infty=\pi(\T w)=w$. If this is not true, then $c_\infty \neq 0$. Then, there exists $k_0 \in \Nb$ such that $c_k>(c_{\infty}/2)$ for all $k > k_0$ and $\lim_{k \to \infty} (d_k/c_k)=d_\infty/c_{\infty}$ exists in $\Rb$. Then using equation \eqref{eqn:yk-to-p} followed by \eqref{eqn:yk-ck-dk} and \eqref{eqn:gamma-k-w}, 
\begin{align}
\label{eqn:p_equals_a}
p=\lim_{k \to \infty} \g^k y_k &=\lim_{k \to \infty} \pi \left(  \dfrac{\wt{\g}^k \wt{y_k}}{c_k\ev_{\max}^k}  \right) \\
&= \lim_{k \to \infty} \pi  \left( \wt{a} + \dfrac{d_k}{c_k}  \left( \dfrac{k}{\ev_{\max}} \Big(\dfrac{\ev_{\min}}{\ev_{\max}} \Big)^{k-1} ~\wt{b} + \Big(\dfrac{\ev_{\min}}{\ev_{\max}} \Big)^{k} \wt{w} \right) \right) \nonumber \\
&=\pi (\wt{a})=a. \nonumber
\end{align}
This is a contradiction since $p \in \Omega$ while $a\in \bdry$. Thus $y_\infty=w$. 

Since $y_k \in \Om$ for $k\geq 1$, $w=y_\infty$ implies that $w \in \clOm$. Then for all $k \in \Zb$, 
\begin{equation}
\label{eqn:w_gw_in_omega}
[w, \g^kw] \subset \clOm.
\end{equation}
We now show that this implies $\Pb(\Sp\{w,b\})  \subset \clOm$. 
For $t>0$, let 
\begin{align*}
\Hc_t:=\left\{ \pi \left( \wt{w}+ r \wt{b} \right) : -t \leq r \leq t \right\}.  
\end{align*}
Then $\overline{\bigcup_{t >0} \Hc_t}=\Pb(\Sp\{b,w\}).$ Now observe that if $k \in \Zb$, then equation \eqref{eqn:gamma-k-w} implies that 
\begin{align*}
\g^k w=\pi \left( \dfrac{\T \g^k \T w}{\ev_{\min}^k} \right)=\pi \left( \wt{w}+\dfrac{k}{\ev_{\min}}\wt{b} \right).
\end{align*} 
Then, for every $t>0$, there exists $k_t \in \Nb$ such that $\Hc_t \subset [\g^{-(k_t-1)}w,w] \cup [w,\g^{k_t}w]$. Then, by equation \eqref{eqn:w_gw_in_omega}, $\Hc_t \subset \clOm$ for any $t>0$.  Thus $\Pb(\Sp\{w,b\}) =\overline{\bigcup_{t>0} \Hc_t} \subset \clOm.$ This is a contradiction as $\Om$ is a properly convex domain and hence $\clOm$ cannot contain a projective line.
\end{proof}

\section{Rank one Isometries: Definition and Properties}
\label{sec:rank-one-isom}

In this section, we introduce the notion of rank one isometries for Hilbert geometries. Our definition is analogous to the definition of rank one isometries for $\CAT(0)$ spaces \cite{ballmann_axial_isometries, ballmann_orbihedra}. The notion of half triangles that we introduce is analogous to the notion of half flats used in the $\CAT(0)$ setting. Refer to Figure \ref{fig:rank_one} for the next two definitions.

\begin{definition}\label{defn:half-T}
 Suppose $\Om\subset \RP$ is a Hilbert geometry.  Then the points $x, z, y \in \bdry$ form a \emph{half triangle in} $\Om$ if
 \begin{align*}
 [x,z] \cup [y,z] \subset \bdry \text{ and } (x,y) \subset \Omega.
 \end{align*}
For $x,y \in \bdry$, we will say that the projective geodesic $(x,y) \subset \Omega$ is \emph{not contained in any half triangle in $\Om$} if for any $z \in \bdry$, either $(x,z) \subset \Om$ or $(z,y) \subset \Om$. 
\end{definition}

\begin{definition}
\label{defn:rank_one_geodesic}
Suppose $\Om \subset \RP$ is a Hilbert geometry and $a,b \in \bdry$. The projective geodesic $(a,b)$ is a \emph{rank one geodesic} provided $(a,b) \subset \Om$ is not contained in any half triangle in $\Om$. 
\end{definition}

We now define rank one isometries for Hilbert geometries. An isometry in $\Aut(\Om)$ is rank one if it acts by a translation along a rank one geodesic (cf. \cref{fig:rank_one}).

\begin{definition} \label{defn:rank-one-isometry} 
Suppose $\Om\subset \RP$ is a Hilbert geometry. 
\begin{enumerate}
\item An element $\g \in \Aut(\Om)$ is a \emph{rank one isometry} if:
\begin{enumerate}
\item $\tau_{\Omega}(\g)=\log \left|\dfrac{\ev_1}{\ev_{d+1}}(\g)\right|>0$,
\item $\g$ has an axis, 
\item none of the axes $\ell_\g$ of $\g$ are contained in a half triangle in $\Om$.
\end{enumerate}
\item A bi-infinite projective geodesic $\ell \subset \Om$ is a \emph{rank one axis} if $\ell$ is the axis of a rank one isometry in $\Aut(\Om).$
\end{enumerate}
\end{definition}

\begin{remark}
\label{rem:rank_one_axis_vs_axis}
The prototypical example of a rank one isometry is a hyperbolic isometry $[\diag(\lambda,1,1/\lambda)]$ ($\ev>1$) in $\Isom (\Hb^2)$, see \cref{ex:axis-H2}. On the other hand, any element in $\Aut(T_d)$, where $T_d$ is a $d$-dimensional simplex, is a non-example. In fact, this non-example highlights the necessity of the half triangle condition in the definition of a rank one isometry, as we now explain.  Recall \cref{ex:triangle} part (A). In that example, $g_2=[\diag(\lambda_1,\lambda_2,\lambda_2)]$ has an axis $Q_t$ for each $0<t<1$ and $\tau_{T_2}(g_2)>0$. But all of these axes are contained in the projective triangle $T_2$ (and hence a half triangle). For another non-example, see \cref{ex:cone_over_H2}.
\end{remark}

Recall Definition \ref{defn:rank-one-hil-geom}: a \emph{rank-one Hilbert geometry} is a pair $(\Om,\G)$ where $\Om$ is a Hilbert geometry and $\G \leq \Aut(\Om)$ is a discrete subgroup that contains a rank one isometry. In Appendix \ref{sec:eg-rank-one}, we discuss examples and also a generalization for convex co-compact groups. 

We will now establish some key geometric and dynamical properties of rank one isometries. The essence here is that translating along a rank one axis is much more special than translating along any axis and \cref{prop:rank-one-properties} could be interpreted as strengthening \cref{lem:axis_inside} under the rank one assumption. Our results are reminiscent of Ballmann's results in rank one Riemannian non-positive curvature \cite{ballmann_axial_isometries, ballmann_book}. Recall the notation $E_{g}^{\pm}$ from \cref{defn:E_L_K_plus} and the notion of proximality from \cref{sec:proximality}.

\begin{proposition}\label{prop:rank-one-properties}
Suppose $\Om$ is a Hilbert geometry and $\g \in \Aut(\Om)$ is a rank one isometry with an axis $\ell_{\g}=(a,b)$ where $a \in E_{\g}^+$ and $b\in E_{\g}^-$. Then: 
\begin{enumerate}
\item $\g$ is biproximal,
\item $\ell_{\g}$ is the unique axis of $\g$ in $\Omega$, 
\item the only fixed points of $\g$ in $\clOm$ are $a$ and $b$, 
\item if $z' \in  \bdry \setminus \{ a,b \}$, then $(a,z') \cup (b,z') \subset \Om$ (cf. \cref{fig:rank_one}),
\item if $z \in \bdry \setminus \{ a,b\}$, then neither $(a,z)$ nor $(b,z)$ is contained in a half triangle in $\Om$.
\end{enumerate}
\end{proposition}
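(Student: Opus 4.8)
The plan is to reduce parts (2), (3), (5) to the two substantive assertions (1) (biproximality) and (4) (that $F_\Om(a)=\{a\}$ and $F_\Om(b)=\{b\}$, equivalently $(a,z')$ and $(b,z')$ enter $\Om$), and then to prove (1) and (4) by ruling out nontrivial faces at the endpoints of the axis. The reductions are short. Part (5) is immediate from (4): if $(a,z)$ were contained in a half triangle there would be $w\in\bdry\setminus\{a,z\}$ with $[a,w]\subset\bdry$, whereas (4) gives $(a,w)\subset\Om$ for every $w\neq a,b$ and $[a,b]=\ell_\g\subset\Om$; the same argument kills $(b,z)$. Part (2) follows from (1) via Observation \ref{obs:axis}(3): biproximality (in the strong form established below, with $1$-dimensional extremal eigenspaces) makes $a$ the unique attracting and $b$ the unique repelling fixed point, and every axis is principal, hence equal to $(a,b)$. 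For (3): a fixed point in $\Om$ would force $\tau_\Om(\g)=0$; a boundary fixed point that is neither attracting nor repelling yields, by Lemma \ref{evector_in_bdry_2}, $[a,c]\cup[b,c]\subset\bdry$, a half triangle on the axis contradicting rank one; and the attracting/repelling fixed points are $a,b$ by (1).

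For (1) I would argue by contradiction. If $\g$ is not positively biproximal then, after possibly replacing $\g$ by $\g^{-1}$, the sum $E_{\T\g}$ of the maximal-modulus eigenspaces has dimension $\geq 2$. Using Proposition \ref{prop:omega-limit-gamma}(2)--(3) (the scaled action on $E_{\T\g}$ is orthogonal), I can choose $\T u\in E_{\T\g}\setminus\Rb\T a$ and an unbounded $\{m_k\}$ with $(\T\g/\evmax(\T\g))^{m_k}\T u\to\T u$, so Lemma \ref{evector_in_bdry_1} produces a nontrivial segment through $a$ inside $F_\Om(a)\subset\bdry$. I then restrict to a $\g$-invariant slice $\mathbb{P}(P)$ with $P=\Rb\T a\oplus\Rb\T u\oplus\Rb\T b$ (enlarging to contain the $2$-dimensional real $\g$-invariant subspace of $\T u$ when the extra eigenvalue is complex). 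On $\mathbb{P}(P)$ the vector $\T b$ spans the unique minimal-modulus direction, so $\g^{-n}p\to b$ for every $p\in\Om_P:=\Om\cap\mathbb{P}(P)$; hence $\Om_P$ is the open cone with apex $b$ over the boundary segment produced above. The lateral edge of this cone through an endpoint $c$ of the segment lies in $\partial\Om_P\subset\bdry$, so $[a,c]\cup[b,c]\subset\bdry$ and $a,b,c$ form a half triangle containing $(a,b)$ — contradiction. This rules out $\dim E_{\T\g}\geq 2$ for both $\g$ and $\g^{-1}$, giving the strong biproximality needed for (2) and (3).

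With (1) established I would prove (4) by upgrading the same cone mechanism. Suppose $z'\in F_\Om(a)\setminus\{a\}$; recall $F_\Om(a)$ is $\g$-invariant and $\overline{F_\Om(a)}$ is a convex subset of $\bdry$ not containing $b$ (else $[a,b]\subset\bdry$). If $z'$ had a nonzero $\T b$-component then $\g^{-n}z'\to b$ by biproximality at the bottom, forcing $b\in\overline{F_\Om(a)}$, which is absurd; hence $z'\in\mathbb{P}(K_{\T{\g}^{-1}})$. Taking $\g^{-n}$-limits isolates the smallest modulus occurring in $z'$, which lies strictly between $\evmin(\T\g)$ and $\evmax(\T\g)$, and produces $z''\in\overline{F_\Om(a)}\subset\bdry$, $z''\neq a,b$, inside a $\g$-invariant subspace of intermediate modulus. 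Running the cone argument in the slice spanned by $\T a$, this subspace, and $\T b$ (where $\T b$ is again the unique minimal direction) forces $[b,z_*]\subset\bdry$ for an extreme point $z_*$ of the cone base, while $[a,z_*]\subset\overline{F_\Om(a)}\subset\bdry$ by convexity (using the dichotomy of Proposition \ref{prop:faces}(3)); this is once more a half triangle on $(a,b)$. Thus $F_\Om(a)=\{a\}$, and applying everything to $\g^{-1}$ gives $F_\Om(b)=\{b\}$, which is exactly (4).

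The main obstacle is making the cone step fully rigorous when the relevant eigenvalues are complex or have higher-dimensional eigenspaces, so that the orbit limit $z''$ need not be an honest fixed point and the slice need not be three-dimensional. The right bookkeeping is to use the orthogonality of the scaled action on each modulus-eigenspace (Proposition \ref{prop:omega-limit-gamma}) together with Proposition \ref{no-face-shared-by-E+-E-} to guarantee that the chosen extreme point $z_*$ of the cone base both lies in $\overline{F_\Om(a)}$ and is joined to $b$ by a boundary segment. Once the half triangle is produced in every case, rank one is contradicted and all five assertions follow.
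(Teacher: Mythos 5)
Your overall architecture is close to the paper's: your reductions of (2), (3) and (5), and the first half of your argument for (1) (using Proposition \ref{prop:omega-limit-gamma} and Lemma \ref{evector_in_bdry_1} to produce a boundary segment through $a$ and then a half triangle) are essentially what the paper does. But there are two genuine gaps. First, in (1): your dichotomy ``not positively biproximal $\Rightarrow$ (after inverting $\g$) the sum $E_{\T\g}$ of maximal-modulus eigenspaces has dimension $\geq 2$'' only addresses multiplicity of \emph{eigenvalues} of extreme modulus. It leaves untouched the case of a single eigenvalue $\evmax(\T\g)$ (or $\evmin(\T\g)$) carried by a Jordan block of size $\geq 2$, where $E_{\T\g}=\Rb\T a$ is one-dimensional and there is no $\T u\in E_{\T\g}\setminus\Rb\T a$ to feed into Lemma \ref{evector_in_bdry_1}, so your cone mechanism never sees it. The paper rules this out in a separate step (Claim \ref{claim:jordan-block-size-1}) by a genuinely different argument: a generalized eigenvector $\T w$ over $\evmin$ satisfies $\T\g^k\T w=k\evmin^{k-1}\T b+\evmin^k\T w$, and a normalization/limit analysis shows one would otherwise force an entire affine line inside $\clOm$, violating proper convexity. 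You cannot skip this: your own step in (4) (``the smallest modulus occurring in $z'$ lies strictly between $\evmin(\T\g)$ and $\evmax(\T\g)$'') is false when a nontrivial Jordan block over $\evmax$ exists, since then $z'$ could lie in the projectivized generalized eigenspace with all moduli equal to $\evmax$ yet $z'\neq a$; likewise the invariant splitting $\Rb^{d+1}=\Rb\T a\oplus\Rb\T b\oplus\T E$ with a spectral gap, which (4) needs, requires the extreme Jordan blocks to have size one.

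Second, and more seriously, your reformulation of (4) as ``$F_{\Om}(a)=\{a\}$ and $F_{\Om}(b)=\{b\}$'' is not equivalent to (4). Triviality of the face only says that no segment of $\clOm$ contains $a$ in its \emph{relative interior}; statement (4) says that no nontrivial segment of $\bdry$ has $a$ as an \emph{endpoint}, which is strictly stronger (at a vertex of the simplex $T_2$ the face is a singleton, yet two boundary edges emanate from that vertex). Your proof of ``(4)'' only rules out $z'\in F_{\Om}(a)\setminus\{a\}$, so even if the cone argument at intermediate modulus were made rigorous, it would establish the weaker face statement; (4) itself, and therefore your derivation of (5), would remain unproven. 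The paper's proof of (4) avoids faces entirely: take any $v\in\bdry\setminus\{a,b\}$ with $[a,v]\subset\bdry$, write $\T v=c_1\T a+c_2\T b+\T v_0$ using the strong biproximal splitting, show $c_2=0$ because otherwise $\g^{-n}[a,v]\to[a,b]$ would put $[a,b]$ in the closed invariant set $\bdry$, and then observe that the $\g^{-n}$-orbit of $v$ accumulates in the compact convex $\g$-invariant set $\Pb(\T E)\cap\clOm$, where Brouwer's fixed point theorem produces a third fixed point of $\g$, contradicting (3). Replacing your face-based argument with this (and adding the Jordan-block claim) would repair the proposal.
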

\begin{remark}
\label{rem:ns_dynamics}
If $\g$ is a  rank one isometry, then the above proposition shows that $\#(E_{\g}^{\pm})=1$ and we will henceforth use the notation $\g^{\pm}:=E_{\g}^{\pm}$. We will call $\g^+$ the \emph{attracting fixed point of $\g$} and $\g^-$ the \emph{repelling fixed point of $\g$.}  We choose this terminology because $\g$ has \emph{north-south dynamics} on $\bdry$, see \cref{cor:ns_dynamics}.
\end{remark}
\begin{proof}
Let us fix $\T \Om$, a cone  over $\Om$. For the rest of this proof, fix lifts $\T \g$, $\T a$ and $\T b$ where $\T a, \T b \in \T \Om$ and $\T \g \cdot \T \Om=\T \Om$. Set $\evmax:=\evmax(\T\g)$ and $\evmin:=\evmin(\T\g)$. Since $a\in E_{\g}^+$ is a fixed point of $\g$, then $\T a$ is an eigenvector of $\wt \g$ corresponding to the eigenvalue $\evmax$ or $-\evmax$. By \cref{lem:lift_has_pos_eigenvalues},  $$\T \g ~\cdot~ \T a = \evmax ~\cdot ~ \T a.$$ Similarly, $\T \g ~ \cdot ~ \T b = \evmin ~ \cdot ~ \T b$. 

\medskip

\noindent (1) By the hypothesis, $\#(E_{\g}^{\pm}) \geq 1$. In order to prove that $\g$ is biproximal, we first prove that: 
\begin{claim}
\label{claim:unique-eigenvalue}
$\#(E_{\g}^+)=\# (E_{\g}^-)=1$.
\end{claim}
\noindent \emph{Proof of Claim. }
It suffices to prove the claim for $E_{\g}^+$ since the same arguments with $\g$ replaced by $\g^{-1}$ implies the result for $E_{\g}^-$. Now suppose the claim is false and there exists $u \in E_{\g}^+ \setminus \{a\}$. Then Lemma \ref{evector_in_bdry_1} implies that there exist $z^-, z^+ \in \bdry$ such that $a \in (z^-,z^+)$ and 
\begin{align*}
F_{\Om}(a) \cap \Pb(\Sp\{a,u\})) =(z^-,z^+).
\end{align*}
Then, $\Ic_z:=\big[ z_-,z_+ \big]$ is the maximal projective line segment in $\bdry$ containing both $z_-$ and $z_+.$

Since $\g$ is a rank one isometry, its axis $(a,b)$ cannot be contained in a half triangle in $\Om$. But $[a,z_+] \subset \bdry$ which implies that $(z_+,b) \subset \Om$. Similarly, $(z_-,b)\subset \Om.$ Choose $x_{\pm} \in (z_{\pm},b) \cap \Om $. By Proposition \ref{prop:omega-limit-gamma} part (3), there exists a sequence $\{m_k\}$ of positive integers with $m_k \to \infty$ such that 
\begin{align*}
\lim_{k \to \infty} \left( \g \big|_{E_{\g}^+} \right)^{m_k}=\id_{E_{\g}^+}.
\end{align*} 
Since $z_+ \in \Pb(\Sp\{a,u\})$, $z_+ \in E_{\g}^+$. Fix a lift $\wt{z_+} \in \T \Om$ of $z_+$. Then $\lim_{k \to \infty} \left( \frac{\T{\g}}{\ev_{\max}} \right)^{m_k}\T{z_+}=\T{z_+}.$

On the other hand, $$\lim_{k \to \infty}\left( \frac{\T \g}{\ev_{\max}}\right)^{m_k} \T{b}=\lim_{k \to \infty}\left( \frac{\ev_{\min}}{\ev_{\max}}\right)^{m_k} \T{b} = 0$$ as $\ev_{\max} > \ev_{\min}$. Then, since $x_+ \in (z_+,b)$, 
$$\lim_{k \to \infty} \g^{m_k}x_+ = z_+.$$ 
Similarly $$\lim_{k \to \infty} \g^{m_k}x_- = z_-.$$  
Since $\lim_{k \to \infty} \hil(\g^{m_k}x_+,\g^{m_k}x_-) = \hil(x_+,x_-)$, Proposition \ref{line-in-bdry-0} implies that $z_+  \in F_{\Omega}(z_-)$. Thus there is an open projective line segment in $\bdry$ containing both $z_+$ and  $z_-$. This contradicts the maximality of $\Ic_z$ and finishes the proof of Claim \ref{claim:unique-eigenvalue}.

By the above claim $\#(E_{\g}^+)=\#(E_{\g}^-)=1$ where $\tau_{\Om}(\g)>0$ and $\g$ has an axis $(a,b)$. Then Lemma \ref{lem:axis_implies_proximal} implies that $\g$ is biproximal.

\noindent (2)  This follows from biproximality of $\g$ and \cref{cor:biproximal_with_axis_implies_unique_axis}.

\noindent (3)  Suppose $c$ is a fixed point of $\g$ in $\bdry$ that is distinct from both $a$ and $b$. By part (1) of this Proposition, $\g$ is biproximal. Thus $c \not \in E_{\g}^+ \cup E_{\g^-}$. Then,  by Lemma \ref{evector_in_bdry_2}, $[a,c] \subset \bdry$ and $[b,c] \subset \bdry.$ Thus, the axis $\ell_{\g}=(a,b)$ of $\g$ is contained in a half triangle, contradicting that $\g$ is a rank one isometry.

\noindent (4)  Let $v \in \bdry \setminus \{a, b\}$. Then $v \not \in \Pb(\Sp\{a,b\})$ as $(a,b) \subset \Om$. Suppose $[a,v] \subset \bdry$. Since $\g$ is biproximal, there exists a $\g$-invariant decomposition of $\Rb^{d+1}$ given by: $$\Rb^{d+1}=\Rb \T{a} \oplus \Rb \T{b} \oplus \T{E}.$$ Choose any lift $\T v$ of $v$ in $\T{\Om}$. Then $\T v$ decomposes as 
\begin{align*}
\T{v}=c_1 \T{a} + c_2 \T{b} + \T{v_0}
\end{align*}
where $c_1, c_2 \in \Rb$ and $\T{v_0} \neq 0$. If $c_2 \neq 0$, then $\lim_{n \to \infty} \g^{-n}v = b$, that is, $\lim_{n \to \infty}\g^{-n}[a,v] = [a,b]$. Since $[a,v] \subset \bdry$ by assumption, $[a,b] \subset \bdry.$ This is a contradiction since $(a,b) \subset \Om$. Thus, $c_2=0$. 

Set $\ev_{\T E}:= \ev_{\max}(\T{\g}\big|_{\T E})$. Since $\g$ is biproximal,  $\ev_{\T E} < \evmax$. Then, for every $n >0$, 
\begin{align*}
\Big( \dfrac{\T{\g}}{\ev_{\T E}} \Big)^{-n}\T{v}=c_1 \Big( \dfrac{\evmax}{\ev_{\T E}}\Big)^{-n} \T a + \Big( \dfrac{\T{\g} \big|_{\T E}}{\ev_{\T E}} \Big)^{-n} \T{v_0}.
\end{align*}
Up to passing to a subsequence, we can assume that $v_{\infty}:= \lim_{n \to \infty} \g^{-n}v$ exists. Note that $v_\infty \in \clOm ~\cap~ \Pb\big( \T E \big).$ But $\clOm \cap \Pb(\wt E)$ is a $\g$-invariant non-empty convex compact subset of $\Rb^{d-1}$ and  Brouwer's fixed point theorem implies that $\g$ has a fixed point in $\clOm \cap \Pb(\wt E)$. But $\clOm \cap \Pb(\wt E) \cap \{ a, b\} =\emptyset$. This contradicts part (3). Hence, $(a,v) \subset \Om$. Similarly we can show that $(b,v) \subset \Om$.

\noindent (5) This is a consequence of part (4).
\end{proof}

\begin{corollary}
\label{cor:ns_dynamics}
Suppose $\g \in \Aut(\Om)$ is a rank one isometry. Then $\g$ has \emph{north-south dynamics} on $\bdry$, that is, 
$$\left( \g\Big|_{\clOm-\{\g^{\mp}\}} \right)^{\pm n} \to \g^{\pm}$$
as $n \to \infty$ and the convergence is locally uniform on compact subsets of $\clOm-\{ \g^{\mp}\}$.
\end{corollary}
\begin{proof}
The proof is very similar to part (4) of \cref{prop:rank-one-properties}. By the above proposition, $\g$ is biproximal. Thus there exists a $\g$-invariant decomposition $\Rb^{d+1}=\Rb \g^+ \oplus H_{\g} \oplus \Rb \g^-$ where $\g^{\pm}=E_{\g}^{\pm}$. Moreover, $\g^n$ converges to the constant map $\g^+$ locally uniformly on compact subsets of $\RP - \Pb(H_{\g} \oplus \Rb \g^-)$ as $n \to \infty$. 

We claim that $\Pb(H_{\g} \oplus \Rb \cdot \g^-) \cap \clOm=\{\g^- \}$. If the claim is false, then pick $v \in \Pb(H_{\g} \oplus \Rb \cdot \g^-) \cap \clOm$ such that $v \neq \g^-$. Up to passing to a subsequence, we can assume that $v_\infty=\lim_{n \to \infty} \g^n v$ exists in $\clOm$. Since $v \in \Pb(H_{\g} \oplus \Rb \g^-)-\{\g^-\}$, similar reasoning as in part (4) implies that $v_\infty \in \Pb(H_{\g})$. Thus $v_{\infty} \in \clOm \cap \Pb(H_{\g})$. Again, as in part (4), Brouwer's fixed point theorem will imply the existence of a fixed point of $\g$ in $\clOm \cap \Pb(H_{\g})$ which is distinct from $\g^{\pm}$. This contradicts  \cref{prop:rank-one-properties} part (3). This finishes the proof of the claim.

By the claim and the first paragraph of the proof, $\g^n$ converges to the constant map $\g^+$ locally uniformly on compact subsets of $\clOm - \{\g^-\}$ as $n \to \infty$. The proof for $\g^{-n}$ follows by  similar reasoning.
\end{proof}

Now we prove a simpler characterization of rank one isometries for co-compact actions.

\begin{proposition}\label{prop:biprox-equiv-no-half-T} 
Suppose $\Om$ is a Hilbert geometry, $\G \leq \Aut(\Omega)$ is a discrete subgroup that acts co-compactly on $\Omega$ and $\g \in \G$ where $\tau_{\Om}(\g)>0$. If $\g \in \G$ has an axis,  then the following are equivalent:
\begin{enumerate}
\item $\g$ is biproximal.
\item none of the axes of $\g$ are contained in a half triangle in $\Om$.
\item $\g$ is a rank one isometry.
\end{enumerate}
\end{proposition}
\begin{proof} 
Note that $(2) \iff (3)$ is by definition (cf. \ref{defn:rank-one-isometry}) and $(3) \implies (1)$ is Proposition \ref{prop:rank-one-properties} part (1). We will prove $(1) \implies (2)$, under the assumption that $\Omega/ \Gamma$ is compact.

\noindent Let $(a,b)$ be the axis of $\g$ with $a \in E_{\g}^+$ and $b \in E_{\g}^-$. We first show that $\g$ has no other fixed points in $\bdry$ except $a$ and $b$. If this is not true, let $v$ be such a fixed point of $\g$. Since $\g$ is biproximal, $v \not \in E_{\g}^+ \cup E_{\g}^-$. Then Lemma \ref{evector_in_bdry_2} implies that 
\begin{align}
\label{eqn:avb_in_bdry}
[a,v] \cup [v,b]  \subset \bdry.
\end{align} 
Since $(a,b) \subset \Om$, $\CH_{\Omega}\{a,v,b\}$ is a non-empty set.

 Let $A_\g:=\langle \g \rangle$. Recall the notation ${\rm Min}_{\Om}(A_\g)=\bigcap_{h \in A_\g}\{ x \in \Om : \hil(x, h \cdot x)=\tau_{\Om}(h) \}$ from \cref{subsec:min_translation}. \cref{lem:min_trans_set_contains_eigenspace} implies that
\begin{align}
\label{eqn:claim_CH_contained_in_min_trans}
\CH_{\Omega}\{a,v,b\} \subset {\rm Min}_{\Om}(A_\g).
\end{align}

The group $\G$ acts co-compactly on $\Om$. Then, Theorem \ref{thm:centralizer} implies that $C_{\Gamma}(A_\g)$ acts co-compactly on $\CH_{\Omega}({\rm Min}_{\Om}(A_\g))$.  Fix $p \in (a,b)$ and choose $v_n \in [p,v)$ such that $\lim_{n \to \infty} v_n=v$. By equation \eqref{eqn:claim_CH_contained_in_min_trans}, $v_n \in {\rm Min}_{\Om}(A_\g)$. Then there exists $h_n \in C_{\G}(A_\g)$ such that $q:=\lim_{n \to \infty} h_nv_n$ exists in $\Omega$. Thus $\lim_{n \to \infty} \hil(h_n^{-1}q,v_n)=0$. Then Proposition \ref{line-in-bdry-0} implies that, up to passing to a subsequence, 
\begin{equation*}
\lim_{n \to \infty} h_n^{-1}q=\lim_{n \to \infty}v_n=v.
\end{equation*}
Pick a point $q' \in (a,b)$. Up to passing to a subsequence, $v':=\lim_{n \to \infty} h_n^{-1}q'$ exists in $\clOm$. Since $\lim_{n \to \infty}\hil(h_n^{-1}q,h_n^{-1}q')=\hil(q,q')$, Proposition \ref{line-in-bdry-0} implies that $v \in F_{\Omega}(v')$.  Now we show that $v'\in\{a,b\}$. Since $h_n \in C_{\G}(A_\g)$, $h_n(a,b)$ is an axis of $\g$. As $\g$ is biproximal and has an axis, \cref{cor:biproximal_with_axis_implies_unique_axis} implies that $h_n(a,b)=(a,b)$. Then, since $q'\in (a,b)$, $v'=\lim_{n \to \infty}h_n^{-1}q' \in \{ a, b\}$. Hence $$v  \in  F_{\Omega}(a) \cup F_{\Omega}(b).$$ 

If possible, let $v \in F_{\Omega}(a)$. By equation \eqref{eqn:avb_in_bdry}, $[a,v] \cup [v,b] \subset \bdry$. Now, by Proposition \ref{prop:faces} part (4), $v \in F_{\Omega}(a)$ and $[v,b] \subset \bdry$ implies that $[a,b] \subset   \bdry$. This is a contradiction as $(a,b) \subset \Om$. Thus, $v \not \in F_{\Om}(a)$. So $v$ must be in $F_{\Omega}(b)$. By a similar reasoning, we now observe that $v \not \in F_{\Om}(b)$. Thus we have a contradiction. 

So we have shown that if $\g$ has an axis $(a,b)$ and is biproximal, then $\g$ has no fixed points in $\bdry$ other than $a$ and $b$.  Then the proof of part (4) of Proposition \ref{prop:rank-one-properties} goes through verbatim. Thus $(a,z) \cup (z,b)\subset \Omega$ for all $z \in \bdry \setminus \{a,b\}$, that is, the axis $(a,b)$ is not contained in any half triangle in $\bdry$. This finishes the proof.
 \end{proof}

 \section{Rank one axis and thin triangles}
\label{sec:rk-1-thin-triangles}

In this section, we prove that any projective geodesic triangle in $\Om$ with one of its sides on a rank one axis $\ell$ is $\Dell$-thin for some constant $\Dell$. 

\begin{proposition} 
\label{prop:Dl-thin}
Suppose $\Om$ is a Hilbert geometry. If $\ell \subset \Om$ is a rank one axis, then there exists a constant $\Dell \geq 0$ such that:  if $\Delta(x,y,z):=[x,y]\cup [y,z] \cup [z,x]$ is a projective geodesic triangle in $\Omega$ with $[y,z] \subset \ell$, then $\Delta(x,y,z)$ is $\Dell$-thin. 
\end{proposition}
\begin{remark}
The thinness constant $\Dell$ in the above theorem depends only on the axis $\ell$ (and not on the rank one isometry that has $\ell$ as its axis).
\end{remark}

But first let us introduce some relevant definitions and technical results that we will need.

\subsection{Thin Triangles}
\begin{definition} 
\label{defn:thin_triangles}
Suppose $(X,d)$ is a geodesic metric space.
\begin{enumerate}
\item  A geodesic triangle $T$ with vertices $y_1, y_2, y_3$ is a union of geodesics $\sigma_1 \cup \sigma_2 \cup \sigma_3$ where $\sigma_i$ is a geodesic joining $y_{i}$ and $y_{i+1}$, where the indices $i \in\{1,2,3\}$ and are counted modulo 3. 
\item A geodesic triangle $T:=\sigma_1\cup\sigma_2\cup \sigma_3$ is called $D$-thin for some $D \geq 0$ if $$\sigma_i \subset \{ x \in X : d(x,\sigma_{i-1} \cup \sigma_{i+1}) < D\}$$ where the indices $i \in\{1,2,3\}$ and are counted modulo 3.
\end{enumerate}
\end{definition}

The following is an elementary observation about thin triangles that we use later in the paper.
\begin{observation}
\label{obs:thinness_implies_special_points}
Suppose $(X,d)$ is a geodesic metric space and $T:=\sigma_1 \cup \sigma_2 \cup \sigma_3$ is a geodesic triangle with vertices $y_1,y_2,y_3$ and each $\sigma_i$ is a continuous geodesic path joining $y_i$ and $y_{i+1}$ (the indices $i \in \{1,2,3\}$ and counted modulo 3). If $T$ is $D$-thin, then there exist $x_i \in \sigma_i$ for $i=1,2,3$ such that $\max\{d(x_1,x_2),d(x_1,x_3)\} \leq D$. 
\end{observation}
\begin{proof}
By slight abuse of notation, let $\sigma_1:[0,b] \to X$ denote the continuous parametrization of the geodesic path $\sigma_1$ for some $b \geq 0$. Without loss of generality, we assume that $\sigma_1(0)=y_1$. Since $T$ is $D$-thin, 
\begin{align}
\label{eqn:consequence_of_thinness}
\sigma_1([0,b]) \subset \{ x \in X: d(x,\sigma_2 \cup \sigma_3) < D\}.
\end{align} 
Note that $d(\sigma_1(0),\sigma_3)=0$ as $y_1 \in \sigma_1 \cap \sigma_3$. Let $E:=\{ t : t \in [0,b], d(\sigma_1(t),\sigma_3) < D\}.$ Then $0 \in E$ and $s_0:=\sup E$ exists. We can find a sequence $\{t_n\}$ in $E$ such that $t_n \to s_0$. Then, by continuity of $\sigma_1$, 
$$d(\sigma_1(s_0),\sigma_3)=\lim_{t_n \to s_0 } d(\sigma_1(t_n),\sigma_3)\leq D.$$ 
Now note that $d(\sigma_1(s_0),\sigma_2)\leq D$. Indeed, if $t>s_0$, then $d(\sigma_1(t),\sigma_3) \geq D$ by definition of $s_0$. Then \cref{eqn:consequence_of_thinness} implies that $d(\sigma_1(t),\sigma_2) < D$. Then by continuity of $\sigma_1$, $$d(\sigma_1(s_0),\sigma_2)=\lim_{t \to s_0^+} d(\sigma_1(t),\sigma_2) \leq D.$$ Then set $x_1:=\sigma_1(s_0)$ and for $i=2,3$, let $x_i \in \sigma_i$ be such that $d(x_1,x_i)=d(x_1,\sigma_i)$.
\end{proof}

Suppose $(\Omega,\hil)$ is a Hilbert geometry. Then there are some special geodesic triangles in $\Omega$, namely the ones whose edges are projective geodesic segments.
\begin{definition}
If $v_1, v_2, v_3 \in \Omega$, a projective geodesic triangle (with vertices $v_1$, $v_2$ and $v_3$) is $$\Delta( v_1, v_2,v_3):= [v_1,v_2] \cup [v_2,v_3] \cup [v_3, v_1].$$ 
\end{definition}
We will say that $\Delta(v_1,v_2,v_3)$ is $D$-thin if it is $D$-thin in the sense of \cref{defn:thin_triangles}. There is a simple criteria to determine whether a projective geodesic triangle is $D$-thin. This proof comes from \cite{IZ2019} and we include it here for the convenience of the reader.

\begin{lemma}\label{lem:thin-triangle-criterion}
Suppose $\Om$ is a Hilbert geometry, $R \geq 0$, and $\Delta(x,y,z)$ is projective geodesic triangle such that $[y,z] \subset \Nc_{R} \big( [x,y] \cup [x,z]\big)$. Then $\Delta(x,y,z)$ is $(2R)$-thin.
\end{lemma}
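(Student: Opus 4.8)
The plan is to read off the definition of $(2R)$-thinness: it requires three neighbourhood containments, one per edge. The hypothesis $[y,z] \subset \Nc_R\big([x,y]\cup[x,z]\big)$ is already the containment for the edge $[y,z]$ (as $R \le 2R$), so the only content left is to prove
\[
[x,y] \subset \Nc_{2R}\big([y,z]\cup[z,x]\big) \quad\text{and}\quad [z,x] \subset \Nc_{2R}\big([x,y]\cup[y,z]\big),
\]
and by the symmetry $y \leftrightarrow z$ it suffices to establish the first.

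The first step is to produce a single point of $[y,z]$ that is simultaneously close to both of the other two edges. To this end I consider
\[
A := \{ w \in [y,z] : \hil(w,[x,y]) \le R\}, \qquad B := \{ w \in [y,z] : \hil(w,[x,z]) \le R\}.
\]
Each is closed (distance to a set is continuous), and the hypothesis gives $A \cup B = [y,z]$. Moreover $y \in A$ and $z \in B$, since each endpoint lies on the corresponding edge. As $[y,z]$ is connected it cannot be the disjoint union of two nonempty closed sets, so there is a point $m \in A \cap B$. Picking $p_0 \in [x,y]$ and $q_0 \in [x,z]$ with $\hil(m,p_0)\le R$ and $\hil(m,q_0)\le R$, the triangle inequality yields $\hil(p_0,q_0) \le 2R$.

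The second step splits $[x,y]$ at $p_0$ and compares each piece to a target edge using the Hausdorff estimate of Proposition \ref{prop:iz-dist-estimate} applied to projective segments sharing an endpoint. The segments $[p_0,y]$ and $[m,y] \subset [y,z]$ share the endpoint $y$ and have free endpoints within $R$, so their Hausdorff distance is at most $R$, giving $[p_0,y] \subset \Nc_R([y,z])$. The segments $[x,p_0]$ and $[x,q_0]\subset[z,x]$ share the endpoint $x$ and have free endpoints within $2R$, giving $[x,p_0]\subset \Nc_{2R}([z,x])$. Combining,
\[
[x,y] = [x,p_0]\cup[p_0,y] \subset \Nc_{2R}([z,x]) \cup \Nc_R([y,z]) \subset \Nc_{2R}\big([y,z]\cup[z,x]\big),
\]
which is the first containment; the estimate for $[z,x]$ follows by the identical argument with $y$ and $z$ interchanged, splitting $[x,z]$ at $q_0$.

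The main obstacle is the first step. The hypothesis only controls how $[y,z]$ sits relative to the other edges, so to bound points lying on $[x,y]$ one is working in the reverse direction and cannot argue pointwise; the trick is to extract, via connectedness of $[y,z]$, the common switching point $m$ that is close to both $[x,y]$ and $[x,z]$. Once $m$ is available the argument reduces entirely to the endpoint-matched Hausdorff comparison of Proposition \ref{prop:iz-dist-estimate}, which is the only Hilbert-geometry-specific input and which substitutes for the convexity-of-distance reasoning that is unavailable in this Finsler setting.
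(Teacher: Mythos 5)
Your proof is correct and takes essentially the same route as the paper's: the paper likewise produces a single point $m_{yz} \in [y,z]$ lying within $R$ of both $[x,y]$ and $[x,z]$, and then applies the endpoint-matched Hausdorff estimate of Proposition \ref{prop:iz-dist-estimate} to the resulting pairs of segments sharing a vertex. The only difference is that you justify the existence of that switching point by an explicit connectedness argument, a step the paper asserts without comment.
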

\begin{proof}
Since $[y,z] \subset \Nc_{R} \big( [x,y] \cup [x,z]\big),$ there exist $m_{yz} \in [y,z]$, $m_{xy} \in [x,y]$ and $m_{xz} \in [x,z]$ such that $\hil (m_{yz},m_{xy}) \leq R$ and $\hil(m_{yz},m_{xz}) \leq R.$ Indeed, the existence of three such points follows by a similar reasoning as in the proof of \cref{obs:thinness_implies_special_points}. Then, by Proposition \ref{prop:iz-dist-estimate}, 
\begin{align*}
\hil^{\Haus}([y,m_{yz}],[y,m_{xy})  &\leq R, \\
\hil^{\Haus}([z,m_{yz}],[z,m_{xz}]) &\leq R, \\
\hil^{\Haus}([x,m_{xy}],[x,m_{xz}]) &\leq 2R.
\end{align*}
Hence, $\Delta(x,y,z)$ is $(2R)$-thin. 
\end{proof}

\subsection{Proof of \cref{prop:Dl-thin}} \

Now we prove \cref{prop:Dl-thin}. Fix a Hilbert geometry $\Om$ and a rank one axis $\ell \subset \Om$. The remark following \cref{prop:Dl-thin} will be a consequence of the proof -- the proof only uses the fact that there is some rank one isometry $\gamma$ that acts along $\ell$ by a translation; it does not rely on $\gamma$ in any other manner. Lemma \ref{lem:thin-triangle-criterion} reduces  \cref{prop:Dl-thin} to the following.

\begin{proposition}
If $\ell \subset \Om$ is a rank one axis, then there exists a constant $\Bell$ with the following property: if $\Delta(x,y,z)$ is an projective geodesic triangle in $\Omega$ with $[y,z] \subset \ell$, then $[y,z] \subset \mathcal{N}_{\Bell} \big( [x,y] \cup [x,z] \big).$ Moreover, this constant $\Bell$ depends only on the rank one axis $\ell$ (and not on the rank one isometry whose axis is $\ell$).
\label{Dl_1}
\end{proposition}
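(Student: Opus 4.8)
The plan is to argue by contradiction, using the translation action of a rank-one isometry $\g$ along $\ell$ to reduce any hypothetical sequence of "fat" triangles to a limiting configuration that produces a half triangle containing $\ell$, contradicting Definition \ref{defn:rank-one-isometry}(3). Write $\ell=(a,b)$, so that $(a,b)\subset \Om$ and $a,b\in\bdry$ are the only boundary points of $\Om$ on the projective line through $a$ and $b$. Suppose no constant $\Bell$ exists. Then for each $n$ there is a non-degenerate triangle $\Delta(x_n,y_n,z_n)$ with $[y_n,z_n]\subset\ell$ and a point $p_n\in[y_n,z_n]$ with $r_n:=\hil\big(p_n,[x_n,y_n]\cup[x_n,z_n]\big)>n$. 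Since $\g$ translates along $\ell$ and acts as an isometry preserving $\ell$, I would replace each triangle by its image under a suitable power $\g^{k_n}$ so that $\g^{k_n}p_n$ lies in a fixed compact fundamental segment of $\ell$ for $\langle\g\rangle$; after renaming and passing to a subsequence I may assume $p_n\to p\in\ell\subset\Om$ while still $r_n\to\infty$. Passing to further subsequences, let $x_n\to\xi$, $y_n\to\eta$, $z_n\to\zeta$ in $\clOm$.

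Next I would pin down these limits. As $x_n,y_n,z_n$ are endpoints of the two segments, $r_n\le \min\{\hil(p_n,x_n),\hil(p_n,y_n),\hil(p_n,z_n)\}$, so all three distances tend to $\infty$ while $p_n\to p\in\Om$; properness of $\hil$ then forces $\xi,\eta,\zeta\in\bdry$. Since $y_n,z_n$ lie on the line through $a$ and $b$, whose only boundary points are $a$ and $b$, each of $\eta,\zeta$ equals $a$ or $b$; and since $p_n$ is strictly between $y_n$ and $z_n$ with $p_n\to p\in(a,b)$, they cannot converge to the same endpoint, so (after relabeling) $\eta=a$ and $\zeta=b$. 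Because $\clOm$ is a compact convex body in the affine chart and the endpoints converge, the chords converge in the Hausdorff sense: $[x_n,y_n]\to[\xi,a]$ and $[x_n,z_n]\to[\xi,b]$.

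I would then show $[\xi,a]\cup[\xi,b]\subset\bdry$. If, say, $[\xi,a]\not\subset\bdry$, then $(\xi,a)\subset\Om$ by Proposition \ref{prop:faces}(3); choosing $q\in(\xi,a)\cap\Om$ and points $q_n\in[x_n,y_n]\subset\Om$ with $q_n\to q$ in the affine chart, continuity of $\hil$ on $\Om$ gives $r_n\le \hil(p_n,q_n)\to \hil(p,q)<\infty$, a contradiction; the same argument handles $[\xi,b]$. Finally $\xi\notin\{a,b\}$, for if $\xi=a$ then $[\xi,b]=[a,b]\subset\bdry$, contradicting $(a,b)\subset\Om$, and symmetrically for $\xi=b$. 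Hence $\xi\in\bdry\setminus\{a,b\}$ with $[a,\xi]\cup[b,\xi]\subset\bdry$ and $(a,b)\subset\Om$, so $a,b,\xi$ form a half triangle containing $\ell$ in the sense of Definition \ref{defn:half-T}, contradicting that $\g$ is a rank-one isometry. Since the only property of $\g$ used is that it translates along $\ell$, the resulting $\Bell$ depends on $\ell$ alone, giving the final ``Moreover'' claim.

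The main obstacle I anticipate is the boundary-control step: upgrading Hausdorff convergence of the chords in the affine chart to the statement that the limiting segments lie in $\bdry$, together with ruling out the degenerate collapses $\eta=\zeta$ and $\xi\in\{a,b\}$. These are precisely the points at which the rank-one hypothesis (no axis lies in a half triangle, equivalently Proposition \ref{prop:rank-one-properties}(4)--(5)) is essential, and where one must be careful that convergence in the affine chart translates into convergence in $\hil$ only through interior points.
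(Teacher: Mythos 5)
Your proof is correct and takes essentially the same route as the paper's: both argue by contradiction, translate the sequence of fat triangles by powers of a rank-one isometry so the deep point stays in a compact part of $\ell$, pass to subsequential limits to obtain a boundary point $\xi$ with $[\xi,a]\cup[\xi,b]\subset\bdry$ while $(a,b)=\ell\subset\Om$, and contradict Definition \ref{defn:rank-one-isometry}(3) via a half triangle. The only cosmetic difference is that you verify the non-degeneracies ($\eta\neq\zeta$ and $\xi\notin\{a,b\}$) explicitly, which the paper leaves implicit.
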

\begin{proof}[Proof of Proposition]
The moreover statement will again follow from the proof since the proof is independent of the choice of the rank one isometry which has $\ell$ as its axis. Now we begin the proof of the first part.

Suppose the claim is false. Then for each $n \geq 0,$ there exists a projective geodesic triangle $\Delta (a_n,b_n,c_n) \subset \Om$ with $[a_n,b_n]\subset \ell$, $c_n \in \Om$ and $e_n \in (a_n,b_n)$ such that $$\hil \left( e_n,[c_n,a_n] \cup [c_n,b_n] \right) \geq n.$$

Since $\ell$ is a rank one axis, there exists a rank one isometry $\g'$ whose axis is $\ell.$ Thus, translating $\Delta (a_n,b_n,c_n)$ by elements in $\langle \g' \rangle$ and passing to a subsequence, we can assume that $e:=\lim_{n \to \infty} e_n$ exists and $e \in \ell.$ Up to passing to a subsequence, we can assume that $a:=\lim_{n \to \infty} a_n$, $b:=\lim_{n \to \infty} b_n$ and $c:=\lim_{n \to \infty}c_n$ exist. Observe that:
\begin{align*}
\hil(e,[a,c]\cup[c,b])=\lim_{n \to \infty}\hil(e_n,[a_n,c_n] \cup [c_n,b_n]) \geq \lim_{n \to \infty} n=\infty.
\end{align*}
Thus $[a,c]\cup[c,b] \subset \bdry$. But $(a,b) \subset \Om$ since $e \in (a,b) \cap \Om$. Thus $a, c, b$ form a half triangle in $\Om$. But since $[a_n, b_n] \subset \ell$, $[a,b] \subset \overline{\ell}$. Since $a,b \in \bdry$ and $\ell \subset \Om$, $\overline{\ell}=[a,b]$. Thus $\ell=(a,b)$.  So the rank one axis $\ell$ is contained in a half triangle in $\Om$, a contradiction. 
\end{proof}

\section{Rank one Hilbert geometry: Zariski density and Limit sets}
\label{sec:rank-one-hil-geom}

Recall the definition of rank one geodesics from \cref{defn:rank_one_geodesic}. In this section we would like to address the following question.
\begin{question}
\label{ques:does_rk1_geod_imply_rk1}
Suppose $(\Omega,\Gamma)$ is a Hilbert geometry and $\Omega$ contains a rank one  geodesic. When does this imply that $(\Omega,\Gamma)$ is a rank one Hilbert geometry? 
\end{question}

It is a natural question that aims to understand how the geometry of a properly convex domain influences the algebraic properties of a `large' group acting on it. Under certain assumptions, Zimmer answers this question in \cite{Z2019}. 

\begin{proposition}
\label{prop:partial_answer_to_ques}
Suppose $\Omega$ is an irreducible Hilbert geometry and $\G \leq \Aut(\Omega)$ acts co-compactly on $\Om$. Then $(\Om,\G)$ is a rank one Hilbert geometry if and only $\Om$ contains a rank one geodesic.
\end{proposition}

This is immediate from \cref{thm:rank_rigidity_detailed}. So our main goal in this section is to answer \cref{ques:does_rk1_geod_imply_rk1} without the assumptions of irreducibility or co-compactness as above. Instead, we will work with groups that satisfy the following assumption.

\begin{quote}
\hypertarget{assumption}{\textbf{Assumption:}} \emph{$\Gamma \leq \SL_{d+1}(\Rb)$ is a discrete Zariski dense subgroup of $\SL_{d+1}(\Rb)$ and there exists a properly convex domain $\Om\subset \RP$ such that $\Gamma \cdot \Omega=\Omega$.}
\end{quote} 
In this assumption, Zariski density may be interpreted as an assurance that the group $\G$ is `large'. We will work with $\SL_{d+1}(\Rb)$ in this section instead of $\PGL$. Indeed, given $\Gamma \leq \PGL$, we can pass to a subgroup of index at most 2 and assume that $\G \leq \SL_{d+1}(\Rb)$. In \cref{sec:hypothesis_star}, we will formulate a hypothesis on the proximal limit set $\rkonelimset$ (cf. \ref{defn:proximal_limset}) that we call \hyperlink{hypoth_star}{\bf Hypothesis ($\star$)} and use it to provide \hyperlink{ans:answer_to_ques_in_sec_8}{\bf an answer to \cref{ques:does_rk1_geod_imply_rk1}.}

\subsection*{Notation} For the rest of this section, let $G:=\SL_{d+1}(\Rb)$,  $P \leq G$ be the subgroup of upper-triangular matrices and $Q$ be the stabilizer in $G$ of $[e_1]=[1:0:\ldots:0] \in \Pb(\Rb^{d+1})$. Fix the standard inner product on $\Rb^{d+1}$ and let $K:={\rm SO}(d+1)$.

Let $\e_i$ be the evaluation of the $i$-th diagonal entry of a diagonal matrix. Then we take $\Delta:=\{\e_i-\e_{i+1}:1 \leq  i \leq d \}$ to be the set of positive simple roots.  
For any $\theta=\{\e_{i_1}-\e_{i_1+1},\dots, \e_{i_k}-\e_{{i_k}+1}\} \subset \Delta$, let $P_{\theta}$ denote the subgroup of block upper triangular matrices in $G$ with square diagonal blocks of size $i_1,i_2-i_1,\dots, i_k-i_{k-1}$, $d-i_k$ respectively. In particular, $P_{\Delta}=P$ and $G/P$ is the full flag variety while $P_{\{\e_1-\e_2\}}=Q$ and $G/Q\cong \RP$.

\subsection{Limit sets in flag varieties}
We will require the notion of limit sets of discrete subgroups of $G$ in flag varieties, in particular $G/P$ and $G/Q$. This has been defined and studied by various authors in different degrees of generality: Guivarch \cite{guivarch1990} (for subgroups of $\SL_{d+1}(\Rb)$ acting proximally and strongly irreducibly on $\Rb^d$), Benoist \cite{B1997} (for Zariski dense subgroups of reductive groups) and Gu\'{e}ritaud-Guichard-Kassel-Wienhard  \cite{GGKW2017} (for arbitrary subgroups of reductive groups). We use the definition from \cite[Section 5.1]{GGKW2017}

First recall the notion of singular value decomposition (or more generally, Cartan decomposition in $G$): for any $g \in G$, there exist $k_1,k_2 \in \SO(d+1)$ and $A_g=\diag(a_1,\dots,a_{d+1})$ with $a_1 \geq \dots \geq a_{d+1} >0$ such that $$g=k_1A_gk_2.$$ The Cartan decomposition defines the Cartan projection $\mu(g):=\diag(\log(a_1),\dots,\log(a_{d+1})).$ It maps $G$ into the space of trace zero diagonal matrices of size $(d+1) \times (d+1)$.

Let $\theta \subset \Delta$. If $g \in G$ has a singular value decomposition $g=k_1 A_g k_2$, define $E_{\theta}: G \to G/P_{\theta}$ by $$E_{\theta}(g): =k_1 \cdot eP_{\theta}.$$ The map $E_{\theta}$ does not depend on the choices of $k_1, k_2$ provided $\alpha(\mu(g))>0$ for all $\alpha \in \theta$ \cite[Section 5.1]{GGKW2017}. 

\begin{definition}[{\cite[Definition 5.1]{GGKW2017}}]
\label{defn:proximal_limset}
Suppose $\Gamma_0$ is a discrete subgroup of  $G$. The limit set $\Lambda_{\Gamma_0}^{G/P_{\theta}}$ of $\Gamma_0$ in $G/P_{\theta}$ is defined to be the set of all accumulation points of sequences $\{E_{\theta}(\gamma_n)\}_{n \in \Nb}$ where $\{\gamma_n\}_{n\in \Nb}$ is any sequence in $\Gamma_0$ such that $\alpha(\mu(\gamma_n)) \to \infty$ for all $\alpha \in \theta$. 
\end{definition}

\begin{remark} \label{rem:proximal_limset_ZD}
Suppose $\Gamma_0$ is Zariski dense in $G$. 
\begin{enumerate}
\item Then $\Lambda_{\Gamma_0}^{G/P_{\theta}}$ is non-empty  and is the closure of the set of attracting fixed point of proximal elements in $G/P_{\theta}$ (\cite{B1997}, \cite[Section 5.1]{GGKW2017}). Here, an element $g \in G$ is called \emph{proximal}\footnote{This coincides with the notion of proximality discussed in \cref{sec:proximality} when $\theta=\{\e_1-\e_2\}$. } in $G/P_{\theta}$  provided $\alpha(\mu(g))>0$ for all $\alpha \in \theta$. Moreover, $g$ is proximal in $G/P_{\theta}$ if and only if $g$ has a unique attracting fixed point\footnote{A fixed point $x \in X$ of a smooth map $f:X \to X$ is attracting if $||Df_x||<1$.} in $G/P_{\theta}$ \cite[Definition 2.25]{GGKW2017}.
\item Suppose $\theta=\{\e_1-\e_2\}$ so that $P_{\theta}=Q$. Then $\Lambda_{\Gamma_0}^{G/Q}$ is the unique minimal closed $\G$-invariant subset of $G/Q$ \cite[Lemma 4.2]{BQ2016}. This may not be true  for arbitrary choices of $\theta$, see \cite[Remark 4.4]{BQ2016}.
\end{enumerate}
\end{remark}

\begin{lemma}
\label{lem:proximal_limset_inside_bdry}
Suppose $\G \leq \SL_{d+1}(\Rb)$ satisfies the \hyperlink{assumption}{\bf Assumption}. Then $\rkonelimset \neq \emptyset$ and $\rkonelimset \subset \partial \Omega$ is the unique minimal closed $\G$-invariant subset of $\bdry$.  
\end{lemma}
\begin{proof} Note that $\bdry$ is a closed $\G$-invariant set and the unique attracting fixed point of any proximal element (in $G/Q$) of $\G$ lies in $\bdry$. The lemma then follows from the above \cref{rem:proximal_limset_ZD}.
\end{proof}

If we do not assume Zariski density, then we may still have non-empty limit set (in an appropriate $G/P_{\theta}$) but with some unusual properties. The following is such an example.
\begin{example}
\label{example:proximal_limset_proj_torus}
Consider the discrete subgroup $\G':=\left \{ \diag(2^{m_1},\dots,2^{m_{d+1}}) : \sum_{i=1}^{d+1} m_i=0 \right\}$ of  $\Aut(T_d)$ and $T_d/\G'$ is a $d$-dimensional torus. Although $\G'$ is not Zariski dense in $\SL_{d+1}(\Rb)$, the proximal limit set in $\RP$ is non-empty and in fact $\Lambda^{G/Q}_{\G'}=\{ [e_1],\dots,[e_{d+1}]\}$. Thus $\Lambda_{\G'}^{G/Q}$ is a proper subset of $\partial T_d$. Note that $(T_d,\G')$ is not a rank one Hilbert geometry, see \cref{rem:rank_one_axis_vs_axis}.
\end{example}

In the light of \cref{lem:proximal_limset_inside_bdry} and this example, it is natural to ask when does $\rkonelimset$ equal $\bdry$.

\begin{remark}
\label{rem:proximal_limit_equals_bdry}
In general, if $\G$ only satisfies the \hyperlink{assumption}{\bf Assumption}, then $\rkonelimset$ can be a proper subset of $\bdry$. For example, let $\G \leq {\rm PO}(2,1)$ be a Zariski dense convex co-compact Kleinian group. Then $\rkonelimset=\overline{\G \cdot x} \cap \partial \Hb^2$ where $x \in \Hb^2$. Unless $\G$ is co-compact, $\rkonelimset \neq \partial \Hb^2$. However, under the additional co-compactness assumption, we often have equality. Blayac \cite[Theorem 1.3]{PLB2021-boundary} has recently shown that  if $(\Omega,\Gamma)$ is a divisible rank one Hilbert geometry, then $\rkonelimset=\bdry$. 
\end{remark}

\subsection{Hypothesis ($\star$) and an answer to \cref{ques:does_rk1_geod_imply_rk1}}
\label{sec:hypothesis_star}

We now introduce a special hypothesis under which we can answer \cref{ques:does_rk1_geod_imply_rk1}.

\begin{quote}
\label{quote:hypoth_star}
\emph{Suppose $\G \leq G$ is a discrete subgroup and $\Omega\subset \Pb(\Rb^{d+1})$ is a properly convex domain such that $\Gamma \cdot \Omega=\Omega$.} \emph{We will say that $(\Omega,\Gamma)$ satisfies \hypertarget{hypoth_star}{\emph{\bf Hypothesis $(\star)$}} if  there exists a rank one geodesic $(a', b') \subset \Omega$ with its endpoints $a',b' \in \rkonelimset \cap \partial \Omega$.}
\end{quote}

We will show that for any Zariski dense discrete subgroup $\G$, this hypothesis is equivalent to the rank one property. One implication is easy and does not require Zariski density.

\begin{lemma}
\label{lem:rank_one_implies_hypoth_star}
Suppose $\G \leq G$ is a discrete subgroup that preserves a properly convex domain $\Om$ and $(\Om,\G)$ is a rank one Hilbert geometry. Then $(\Omega,\G)$ satisfies \hyperlink{hypoth_star}{\emph{\bf Hypothesis $(\star)$}}.
\end{lemma}
\begin{remark}
Note that in this lemma, we do not assume that $\G$ is Zariski dense in $G$.
\end{remark}
\begin{proof}
Since $(\Om,\G)$ is a rank one Hilbert geometry, we can find a rank one isometry $\g \in \G$. Let $\g^{\pm} \in \bdry$ be the attracting and the repelling fixed points of $\g$. Then $\g^{\pm} \in \rkonelimset$ by definition of $\rkonelimset$.  Also $(\g^+,\g^-)$ is the axis of $\g$ and hence a rank one geodesic, see \cref{defn:rank-one-isometry} and \cref{prop:rank-one-properties}.
\end{proof}

Next we will seek a converse of the above lemma and this will require the Zariski density assumption on $\G$. But first we recall the notion of loxodromic elements. We will call $g \in G$ \emph{loxodromic} if 
$$|\ev_1(g)|> \dots > |\ev_{d+1}(g)|.$$ If $g$ is loxodromic, then it has unique attracting fixed points in both $G/Q$ and $G/P$. We will denote by $a_{g}^{\pm} \in G/Q$ (resp. $\Xb_g^{\pm} \in G/P$) the unique attracting fixed point of $g^{\pm 1}$ in $G/Q$ (resp. $G/P$).  With this notation, $\Pi_{PQ}(\Xb_{g}^{\pm})=a_g^{\pm}$ where $$\Pi_{PQ}:G/P \to G/Q$$ 
is the natural smooth projection map. Also recall that if $g \in \Aut(\Omega)$ is a rank one isometry, then we denote by $g^{\pm}$ the attracting and the repelling fixed points of $g$ (\cref{rem:ns_dynamics}).

\begin{lemma}\label{lem:limit-set-rank-one-hil-geom}
 Suppose $\G \leq G$ satisfies the \hyperlink{assumption}{\bf Assumption}.   If there exists a rank one geodesic $(a',b') \subset \Omega$  with its endpoints $a',b ' \in \rkonelimset \cap \bdry$, then:
\begin{enumerate}
\item there exist rank one isometries $\{g_n\}$ in $\G$ such that $\lim_{n \to \infty}g_n^+=a'$ and $\lim_{n \to \infty} g_n^-=b'$, 
\item $(\Om,\G)$ is a rank one Hilbert geometry,
\item the set of rank one isometries in $\G$ is Zariski dense in $G$, 
\item $\rkonelimset=\overline{\{ \g ^+: \g \text{ is a rank one isometry } \}}$.
\end{enumerate} 
\end{lemma}
\begin{corollary}
\label{cor:hypoth_star_implies_rank_one}
 Suppose $\G \leq G$ satisfies the \hyperlink{assumption}{\bf Assumption} and $(\Omega,\G)$ satisfies \hyperlink{hypoth_star}{\emph{\bf Hypothesis $(\star)$}}. Then $(\Omega,\Gamma)$ is a rank one Hilbert geometry.
\end{corollary}

\begin{proof}[Proof of Lemma \ref{lem:limit-set-rank-one-hil-geom}]
The key idea of this proof is in \cite{SAM2019} and it relies on results of Benoist \cite{benoist_auto_convex_cone}. Before starting the proof, we informally outline the main idea. The key technical point is to find a seqeunce $\{g_n\}$ of biproximal elements in $\G$ such that $a_{g_n^+} \to a'$ and $a_{g_n}^- \to b'$. A direct way to find such a $\{g_n\}$ is: using Zariski density, find a pair $g,h \in \Gamma$ of transversally biproximal elements \cite[Chapter 7]{BQ2016} such that $a_g^+$ and $a_h^-$ are arbitrarily close to $a'$ and $b'$ respectively. Then, for large enough $n$,  $g^nh^n$ is a biproximal element whose attracting and repelling fixed points are close to $a'$ and $b'$. However, in this proof, we will take a more  indirect approach by passing to the limit set in $G/P$ and using a result of Benoist. We rely on \cite[Lemma 2.6 (c)]{benoist_auto_convex_cone}: given two distinct points $\Xb_{+}, \Xb_- \in \Lambda_{\G}^{G/P}$, there exist loxodromic elements $g_n \in \Gamma$ such that $\Xb_{g_n}^{\pm} \to \Xb_{\pm}$.   Once we have this sequence $\{g_n\}$, Claim \ref{claim:many_rank_ones} implies that all but finitely many of them are rank one isometries.

Now we begin the formal proof. Equip $G/P$ and $G/Q$ with $K$-invariant Riemannian metrics and denote the corresponding Riemannian distance functions by $d_P$ and $d_Q$ respectively. We remark that this specific choice of Riemannian metrics will be insignificant as $G/P$ and $G/Q$ are compact manifolds. Let $\G_{\rm lox}$ be the set of loxodromic elements in $\G$. Since $\G$ is Zariski dense in $G$, \cref{rem:proximal_limset_ZD} implies that $\Pi_{PQ}(\Lambda_{\G}^{G/P})=\rkonelimset$. Then pick $\Xb_a, \Xb_b \in \Lambda_{\G}^{G/P}$ such that $\Pi_{PQ}(\Xb_a)=a'$ and $\Pi_{PQ}(\Xb_b)=b'$.  For any $\e>0$, 
 \begin{equation*}
\G_{\e}:=
\{ g \in \G_{\rm lox} : \dist_{P}(\Xb_g^+, \Xb_a) < \e, \dist_{P}(\Xb_g^-,\Xb_b) < \e \}
\end{equation*} 
 is Zariski dense in $G$ \cite[Lemma 2.6 (c)]{benoist_auto_convex_cone}. 
 
For any $g \in \G_{\rm lox}$, $a_g^{\pm}=\Pi_{PQ}(\Xb_g^{\pm})$ and $a_{g}^{\pm} \in \bdry$. Moreover, $\Pi_{PQ}$ is continuous and $(a',b') \subset \Om$.  Thus there exists $\e'$ such that: if $\e \in (0,\e')$,  then $(a_g^+,a_g^-) \subset \Om$ for any $g \in \G_{\e}$. In fact $(a_g^+,a_g^-) \subset \Om$ is the unique axis in $\Om$ for any such $g \in \G_{\e}$. Indeed, the uniqueness follows from \cref{cor:biproximal_with_axis_implies_unique_axis}  because $g$ has an axis $(a_g^+,a_g^-) \subset \Om$, $g$ is loxodromic and $\tau_{\Om}(g)>0$. We now claim that: 
\begin{claim}
\label{claim:many_rank_ones}
If $\e \in (0,\e')$ is small enough, then $g$ is a rank one isometry for all $g \in \G_{\e}$.
\end{claim}
\noindent\emph{Proof of Claim.} Suppose the claim is false. Then there exist a sequence $\{\e_n\}$ in $(0,\e')$ with $\e_n \to 0$ and $g_n \in \G_{\e_n}$ such that $g_n$ is not a rank one isometry. Then $\Xb_{g_n}^+ \to \Xb_a$ and $\Xb_{g_n}^- \to \Xb_b$. Since $\Pi_{PQ}$ is continuous, $a_{g_n}^+ \to a'$ and $a_{g_n}^-\to b'$. 

By the paragraph before the claim, each $g_n$ has a unique axis $(a_{g_n}^+,a_{g_n}^-) \subset \Om$.  Moreover, $(a_{g_n}^+,a_{g_n}^-) \to (a',b')$. But since $g_n$ is not a rank one isometry by assumption, this implies that there exists $\{c_n\}$ with $c_n \in \bdry - \{ a_{g_n}^+,a_{g_n}^-\}$ such that $$[a_{g_n}^+,c_n] \cup [c_n,a_{g_n}^-] \subset \bdry.$$ Up to passing to a subsequence, we can assume that $c_n \to c$ in $\bdry$. Then $[a',c] \cup [c,b' ] \subset \bdry$ while $(a',b') \subset \Om$. Thus $(a',b')$ cannot be a rank one geodesic and we have a contradiction. This finishes the proof of this claim.

Now we finish the proof of the lemma. Let us choose an $\e \in (0,\e')$ as in the above claim.

\noindent (1) The result follows by choosing $g_n \in \G_{\e/n}$ for all $n \geq 1$. 

\noindent (2) This follows from (1), since there is at least one rank one isometry in $\G$.

\noindent (3) The set $\G_{\e}$ is a subset of the set of rank one isometries of $\G$ and $\G_{\e}$ is Zariski dense. 

\noindent (4) By \cref{lem:proximal_limset_inside_bdry}, $\rkonelimset \subset \bdry $ is a minimal, closed $\G$-invariant set which contains the unique attracting fixed points of all proximal elements. Since a rank one isometry is necessarily proximal, 
$\overline{ \{ \g^+ : \g \text{ is a rank one isometry } \}} \subset \rkonelimset.$ Since $\overline{ \{ \g^+ : \g \text{ is a rank one isometry } \}}$ is a closed $\G$-invariant set,  the equality then follows from minimality of $\rkonelimset$. 
\end{proof}

We now observe that \hyperlink{hypoth_star}{\bf Hypothesis ($\star$)} gives an \hypertarget{ans:answer_to_ques_in_sec_8}{\textbf{answer to \cref{ques:does_rk1_geod_imply_rk1}}} (cf. \ref{lem:rank_one_implies_hypoth_star} and \ref{cor:hypoth_star_implies_rank_one}):
\begin{quote} 
if $\G \leq \SL_{d+1}(\Rb)$ is a discrete Zariski dense subgroup of $\SL_{d+1}(\Rb)$ that preserves a properly convex domain $\Om$, then \emph{$(\Om,\G)$ is a rank one Hilbert geometry} if and only if \emph{$\Om$ contains a rank one geodesic $(a',b') \subset \Om$ with $a',b' \in \rkonelimset \cap \bdry$}.
\end{quote}

We finish the section with an example where \hyperlink{hypoth_star}{\bf Hypothesis ($\star$)} fails. Recall \cref{example:proximal_limset_proj_torus}. In that case, $\G'$ preserves the standard $d$-simplex $T_d$, $T_d/\G'$ is homeomorphic to a $d$-torus, and $T_d$ does not contain any rank one geodesics. Thus $(T_d,\G')$ does not satisfy \hyperlink{hypoth_star}{\bf Hypothesis ($\star$)}. However, in this example, the group $\G'$ is not Zariski dense in $\SL_{d+1}(\Rb)$ and one may wonder if that is the reason why \hyperlink{hypoth_star}{\bf Hypothesis ($\star$)} fails. So, we ask the following question.
\begin{question}
Suppose $\Gamma\leq G$ is a discrete subgroup that preserves a properly convex domain $\Om$. If $\G$ is Zariski dense in $G$, then does $(\Om,\G)$ satisfy  \hyperlink{hypoth_star}{\bf Hypothesis ($\star$)}?
\end{question}
To the best of the author's knowledge, the answer to this question is not known unless one makes other assumptions, e.g. say $\Om/\G$ is compact and $\Om$ is irreducible. Under these assumptions, \cref{rem:proximal_limit_equals_bdry} and \cref{thm:rank_rigidity_detailed} together provide an answer.

\part{Contracting elements in Hilbert geometry}
\label{part:contracting_element_hil_geom}

In this part of the paper, we prove our main results  - Theorems \ref{thm:contracting-iff-rank-one} and \ref{thm:acy-hyp}. The outline of this part of the paper is as follows. We recall the notion of contracting elements in Section \ref{sec:defn-contracting-element}. The proof of Theorem \ref{thm:contracting-iff-rank-one} is split into two sections: Section \ref{sec:rank-one-implies-contracting} and Section \ref{sec:contracting-implies-rank-one}. In Section \ref{sec:acy_hyp}, we introduce the notion of acylindrically hyperbolic groups  and prove Theorem \ref{thm:acy-hyp}.

\section{Contracting Elements: Definition and Properties}
\label{sec:defn-contracting-element}

Suppose $K \geq 1$ and $C \geq 0$. A function $F:(X,\dist_X) \to (Y,\dist_Y)$ is called a $(K,C)$-quasi-isometric embedding if for any $x_1,x_2 \in X$,
\begin{equation*}
\dfrac{1}{K} \dist_X(x_1,x_2)-C \leq \dist_Y(F(x_1),F(x_2)) \leq K \dist_X(x_1,x_2)+C.
\end{equation*}

Fix a proper geodesic metric space $(X,\dist)$ and a group $G$ that acts properly and by isometries on $X$. If $K \geq 1$ and $C \geq 0$, then a $(K,C)$-path in $(X,\dist)$ is a set $F(\Rb)$ where $F:(\Rb,|\cdot|) \to (X,\dist)$ is a $(K,C)$-quasi-isometric embedding.  A subpath of the path $F(\Rb)$ is $F(I)$ where $I \subset \Rb$ is an interval, possibly unbounded. 

\begin{definition}
\label{defn-paths}
Let $K \geq 1$ and $C\geq 0$. Let $\PS$ be a collection of $(K,C)$-paths in $X$. Then: 
\begin{enumerate}
\item $\PS$ is called a \emph{path system} on $X$ if: 
\begin{enumerate}
\item any subpath of a path in $\PS$ is also in $\PS$ and
\item any pair of points in $X$ can be connected by a path in $\PS$.
\end{enumerate}
\item $\PS$ is called a \emph{geodesic path system} if all paths in $\PS$ are geodesics in $(X,\dist)$.
\item If $G$ preserves $\PS$, then $(X,\PS)$ is called a \emph{path system for the group $G$}.
\end{enumerate}
\end{definition}

\begin{definition}[Contracting subsets \cite{sisto_contracting_rw}]
\label{defn-contraction-sisto}
If $\PS$ is a path system on $X$, then $\Ac \subset X$ is called \emph{$\PS$-contracting}  (with constant $C$) if there exists a map $\pi_\Ac: X \to \Ac$ such that:
\begin{enumerate}
\item if $x \in \Ac$, then $\dist \big( x, \pi_\Ac(x) \big) \leq C$
\item if $x, y \in X$ and  $\dist \big( \pi_\Ac(x),\pi_\Ac(y) \big) \geq C, $ then for any path $\sigma \in \PS$ joining $x$ and $y$,  
\begin{equation*}
\dist \big( \sigma, \pi_\Ac(x) \big) \leq C ~~\text{ and }~~ \dist \big( \sigma, \pi_\Ac(y) \big) \leq C.
\end{equation*}
\end{enumerate}
\end{definition}

A prototypical example of a contracting subset is a bi-infinite geodesic in $\Hb^2$ (with the map $\pi_{\Ac}$ being the closest point projection on the geodesic). Generally speaking, one should think of the projection map $\pi_{\Ac}$ as an analogue of the closest-point projection. In fact, the following lemma makes this analogy concrete in the context of geodesic path systems. We will use the notation
$$\pid_{\Ac}(x):=\{a \in \Ac : \dist(x,a)=\dist(x,\Ac)\}$$  
for set-valued closest-point projection map on  $\Ac$.

\begin{lemma}\label{lem:coarse-equiv-of-proj}
Suppose $\PS$ is a geodesic path system and $\Ac \subset X$ is $\PS$-contracting (with constant $C$) with the projection map $\pi_\Ac: X \to \Ac$. Then $\left(\sup_{a \in \pid_{\Ac}(x)}\dist(\pi_\Ac(x), a) \right) \leq 2C$ for all $x \in X$.
\end{lemma}
\begin{proof}
Suppose there exist $x \in X$ and $a \in \pid_{\Ac}(x)$ such that 
\begin{align*}
\dist(\pi_\Ac(x), a)>2C.
\end{align*}
Since $\Ac$ is $\PS$-contracting and $a \in \Ac$, $\dist(\pi_\Ac(a), a) \leq C.$ Then 
\begin{equation*}
\dist(\pi_\Ac(x), \pi_\Ac(a) \geq \dist(\pi_\Ac(x), a)- \dist(\pi_\Ac(a), a)> C.
\end{equation*}
Let $\sigma_{x,a}$ be a geodesic path in $\PS$ joining $x$ and $a$. Since $\Ac$ is $\PS$-contracting, there exists $z \in \sigma_{x,a}$ such that $\dist(z, \pi_\Ac(x)) \leq C.$ As $z \in \sigma_{x,a}$, $\dist(a, z) = \dist(a,x)- \dist(z,x)$. As $a \in \rho_{\Ac}(x)$, $\dist(x,a) \leq \dist(\pi_{\Ac}(x),x)$. Then
\begin{align*}
\dist(a, z) \leq \dist(\pi_{\Ac}(x),x) -\dist(x,z) \leq \dist(\pi_\Ac(x),z)+ \dist(z,x)- \dist(z,x) \leq C.
\end{align*}
Then $\dist(\pi_{\Ac}(x),a) \leq \dist(\pi_{\Ac}(x),z)+\dist(z,a) \leq 2C$, a contradiction. 
\end{proof}

Using the notion of contracting subsets, one introduces the notion of  contracting group elements. A prototypical example of a contracting element is $g=\begin{pmatrix} \ev & 0 \\ 0 & 1/\lambda \end{pmatrix}$ for some $\lambda>1$, that acts on $\Hb^2$ by a translation along a bi-infinite geodesic in $\Hb^2$. 
\begin{definition}[Contracting elements \cite{sisto_contracting_rw}]\label{defn-contracting-element-sisto}
If $(X, \PS)$ is a path system for $G$, then $g \in G$ is a  \emph{contracting element} for $(X, \PS)$ provided for some (hence any) $x_0 \in X$: 
\begin{enumerate}
\item $g$ has infinite order and $\langle g \rangle \cdot x_0$ is a quasi-isometric embedding of $\Zb$ in $X$,
\item there exists $\Ac \subset X$ containing $x_0$ that is $\langle g \rangle$-invariant, $\PS$-contracting and has co-bounded $\langle g \rangle$ action.
\end{enumerate}
\end{definition}

\begin{remark}
We remark that if $g \in G$ is a contracting element and $\PS$ is a geodesic path system, then $\pi_{\Ac}$ is coarsely $\langle g \rangle$-equivariant. This is immediate from \cref{lem:coarse-equiv-of-proj} since $\pi_{\Ac}$ is coarsely equivalent to $\pid_{\Ac}$ and $\pid_{\Ac}$ is clearly $\langle g \rangle$-equivariant.
\end{remark}

In the definition of a contracting element, the set $\Ac$ is not necessarily a $\langle g \rangle$-orbit in $X$. We will now explain that we can always replace $\Ac$ by a $\langle g \rangle$-orbit. Moreover, we also show $g$ has positive translation length for its action on $X$. We remark that the following observation does not require that $\PS$ is a geodesic path system.

\begin{observation}\label{prop:on-contracting-elements}
Suppose $(X,\PS)$ is a path system for $G$ and $g \in G$ is a contracting element for $(X,\PS)$. Then: 
\begin{enumerate}
\item $\tau_X(g):=\inf_{x \in X} \dist (x,gx)$ is positive. 
\item for any $x_0 \in \Ac$, $\Amin:=\langle g \rangle x_0$ is the minimal $\PS$-contracting, $\langle g \rangle$-invariant subset of $X$ containing $x_0$ with a co-bounded $\langle g \rangle$ action. 
\end{enumerate}
\end{observation}
\begin{proof}
(1)  Recall the definition of stable translation length $$\tau^{\st}_X(g):=\lim_{n \to \infty}\frac{\dist(x,g^nx)}{n}.$$ Then $\tau_X(g) \geq \tau^{\st}_X(g)$ and it suffices to show $\tau^{\st}_X(g)>0$. Fix any $x_0 \in X$. Since $g$ is contracting, $\langle  g \rangle x_0$ is a quasi-geodesic, that is, there exists $K \geq 1$ and  $C \geq 0$ such that $\dist(x_0,g^nx_0) \geq \frac{1}{K}|n|-C$ for every $n \in \Zb$. Then, $\tau^{\st}_X(g)\geq 1/K>0.$

\noindent (2) Let $\Ac$ be $\PS$-contracting with constant $C_\Ac$ and the map $\pi_\Ac:X \to \Ac$. Fix any $x_0 \in \Ac$ and set $R_\Ac := \diam \big( \Ac/ \langle g \rangle \big)$, $C_0:=C_\Ac +2R_\Ac$ and $\Amin:= \langle g \rangle x_0$. 

Since $\Amin \subset \Ac$, if $x \in X$, then there exists $m \in \Zb$ such that $\dist(\pi_\Ac(x),g^mx_0) \leq R_\Ac$. Define $\pi_{\min}: X \to \Amin$ by setting $\pi_{\min}(x)=g^mx_0.$ Then,  if $x \in \Amin$, $\pi_{\min}(x)=x$. If $x,y \in X$ and $\dist(\pi_{\min}(x),\pi_{\min}(y)) \geq C_0$, then $\dist(\pi_\Ac(x), \pi_\Ac(y)) \geq C_\Ac$. Thus, if $\sigma \in \PS$ is a path from $x$ to $y$,   $\dist(\pi_\Ac(x), \sigma) \leq C_\Ac$ and $\dist(\pi_\Ac(y), \sigma) \leq C_\Ac.$ Hence, 
\begin{equation*}
\dist(\pi_{\min}(x), \sigma) \leq  C_0  ~~\text{ and }~~  \dist(\pi_{\min}(y), \sigma) \leq C_0. \qedhere
\end{equation*}
\end{proof}

There are many other  notions of contracting subsets in geometric group theory. We will require one such notion in Section \ref{sec:counting-conjugacy} for proving our  \cref{thm:counting-conjugacy}. We will call this notion \emph{contraction in the sense of $\BF$} -- it was introduced by Bestvina-Fujiwara for CAT(0) spaces \cite{bestvina_fujiwara_symm_space} and by Gekhtman-Yang in general \cite{gekhtman_wenyuan_counting}. We defer all further discussion about this to the Appendix \ref{appendix:contracting-BF} and only remark that in our case, this notion of contraction will be equivalent to \cref{defn-contraction-sisto}.

\begin{remark}\label{rem:contraction-equiv}
If $\Om$ is a Hilbert geometry, we will always use the geodesic path system $\PS^\Om:=\{ [x,y]: x,y \in \Om\}$ given by projective geodesics. We use the $\PS$-contracting notion everywhere in the paper except in Section \ref{sec:counting-conjugacy} (where we use \emph{contraction in the sense of BF}, cf. \ref{defn-contracting-BF}). Proposition \ref{prop:notion-of-contraction-equiv} below implies that these two notions of contraction are equivalent in our setup. Hence, in the rest of the paper, we will use the term  contracting subset (and element) without additional labels.
\end{remark}

\begin{proposition}\label{prop:notion-of-contraction-equiv}
Suppose $(X,\PS)$ is a geodesic path system. Then: 
\begin{enumerate}
\item $\Ac \subset X$ is $\PS$-contracting if and only if $\Ac$ is contracting in the sense of $\BF$. 
\item If $G$ preserves $\PS$, then $g \in G$ is a contracting element for $(X,\PS)$ if and only if $g \in G$ is a contracting element in the sense of $\BF$.
\end{enumerate}
\end{proposition}
\begin{proof}
See Appendix \ref{appendix:contracting-BF}.
\end{proof}

\section{Rank one isometries are contracting}
\label{sec:rank-one-implies-contracting}

 In this section, we prove one implication in Theorem \ref{thm:contracting-iff-rank-one}. Fix a Hilbert geometry $\Om$ and let $\PS^\Om:=\big\{ [x,y]: x, y \in \Om \big\}$.

\begin{theorem}\label{rank1-implies-contraction}
If $\g \in \Aut(\Omega)$ is a rank one isometry, then $\g$ is a contracting element for $(\Omega, \PS^\Om).$
\end{theorem}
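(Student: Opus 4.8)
The plan is to show that the axis $\ell_\g = (a,b)$ of the rank-one isometry $\g$ is a $\PS^\Om$-contracting set, and that the orbit $\langle \g \rangle x_0$ is a quasigeodesic embedding with co-bounded $\langle \g\rangle$-action on a contracting set; by Definition \ref{defn-contracting-element-sisto} (and Observation \ref{obs:contracting-element-sisto}, so it suffices to check one basepoint $x_0$) this makes $\g$ a contracting element. The natural set $\Ac$ to use is the axis $\ell_\g$ itself, together with the closest-point projection $\pi_\Ac = P_{\ell_\g}$ studied in Lemma \ref{lem:P-sigma}. The engine behind the whole argument will be the thinness Theorem \ref{thm:Dl-thin}: any non-degenerate geodesic triangle $\Delta(x,y,z)$ with one edge $[y,z]$ on the rank-one axis $\ell$ is $\Dell$-thin, with $\Dell$ depending only on $\ell$. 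This is exactly the ``negative curvature along $\ell$'' that forces a projection to be contracting.

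First I would verify the easy structural conditions. Since $\g$ is rank-one, $\tau_\Om(\g) > 0$, so $\g$ is infinite order and, acting by translation along its axis $\ell_\g$, the orbit $\langle \g\rangle x_0$ for $x_0 \in \ell_\g$ satisfies $\hil(x_0, \g^n x_0) = |n|\,\tau_\Om(\g)$, giving a genuine quasigeodesic embedding of $\Zb$ (in fact an isometric embedding up to the single basepoint). The $\langle \g\rangle$-action on $\ell_\g$ is co-bounded with $\diam(\ell_\g/\langle\g\rangle) = \tau_\Om(\g)$, and $\ell_\g$ is $\langle\g\rangle$-invariant. So the only substantive point is that $\ell_\g$ is $\PS^\Om$-contracting.

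For the contracting property, let $\pi_\Ac := P_{\ell_\g}$ be the closest-point projection onto $\ell_\g$; by Lemma \ref{lem:P-sigma} this has image a subsegment (possibly a point), and one can fix a single-valued choice. Given $x,y \in \Om$ with $\hil(\pi_\Ac(x),\pi_\Ac(y))$ large, consider any path $\sigma = [x,y] \in \PS^\Om$. The goal is that $\sigma$ passes within a bounded distance of both $\pi_\Ac(x)$ and $\pi_\Ac(y)$. The strategy is to form the geodesic triangle with vertices $x$, $y$ and a point of $\ell_\g$ and apply Theorem \ref{thm:Dl-thin}: more precisely, I would drop perpendiculars by joining $x$ and $y$ to their projections $p_x := \pi_\Ac(x)$, $p_y := \pi_\Ac(y)$ on $\ell_\g$, so that $[p_x,p_y] \subset \ell_\g$, and then build the triangles $\Delta(x,p_x,p_y)$ and $\Delta(y,p_x,p_y)$, each having an edge on $\ell$; each is $\Dell$-thin. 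Using the defining property of the closest-point projection (that $[x,p_x]$ meets $\ell_\g$ only near $p_x$, else it would violate minimality of $\hil(x,\ell_\g)$), thinness forces the edge $[x,y]$ of the triangle $\Delta(x,y,p_x)$ (or $\Delta(x,y,p_y)$) to fellow-travel the broken path $[x,p_x]\cup[p_x,y]$ within $2\Dell$; combining with the analogous estimate on the $y$-side, $\sigma=[x,y]$ must come within a uniform constant (depending only on $\ell$, via $\Dell$ and the projection constant from Lemma \ref{lem:P-sigma}) of both $p_x$ and $p_y$. Choosing $C$ to dominate all these constants and the projection-return bound gives Definition \ref{defn-contraction-sisto}.

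The main obstacle I anticipate is the geometry of the closest-point projection in the Finsler (non-$\CAT(0)$) setting: unlike in $\CAT(0)$ spaces, $\hil$ is not convex, projections need not be contractions in the naive sense, and $P_{\ell_\g}(x)$ can be a nondegenerate segment (Lemma \ref{lem:P-sigma}). So the delicate part is translating the triangle-thinness of Theorem \ref{thm:Dl-thin} into an honest bound on $\hil(\sigma, p_x)$ and $\hil(\sigma, p_y)$, controlling how the fellow-traveling constant interacts with the ambiguity in the projection and ensuring the ``large projection distance'' hypothesis $\hil(p_x,p_y)\ge C$ rules out the degenerate case where the triangle collapses. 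I expect to need the Hausdorff distance estimate of Proposition \ref{prop:iz-dist-estimate} repeatedly to pass from single-point closeness to fellow-traveling of whole subsegments, exactly as in Lemma \ref{lem:thin-triangle-criterion}. Once these Finsler technicalities are absorbed into the single constant $C$, the verification of Definition \ref{defn-contracting-element-sisto} is routine.
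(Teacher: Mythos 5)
Your overall skeleton matches the paper: take the axis $\ell_\g$ as the contracting set with the projection built from Lemma \ref{lem:P-sigma} (Definition \ref{defn:projection-map}), note that the verification of infinite order, quasigeodesic orbit, invariance and co-boundedness in Definition \ref{defn-contracting-element-sisto} is immediate from the translation action, and reduce everything to showing $\ell_\g$ is $\PS^\Om$-contracting. The easy parts are fine, and the estimates you can legitimately extract from Theorem \ref{thm:Dl-thin} (namely, that any geodesic from a point $x$ to a point $z\in\ell$ passes within a bounded distance of $\pi_{\ell}(x)$, since $\Delta(x,\pi_\ell(x),z)$ has the edge $[\pi_\ell(x),z]\subset\ell$) are exactly the paper's Lemma \ref{lem:rank1-axis-contracting}(1). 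The gap is in the key step after that. Theorem \ref{thm:Dl-thin} applies only to triangles with an entire \emph{edge} on $\ell$, but the triangle $\Delta(x,y,p_x)$ you invoke has only the single vertex $p_x$ on $\ell$; its edge $[x,y]$ joins two points that may both be far from the axis. So the assertion that ``thinness forces the edge $[x,y]$ to fellow-travel the broken path $[x,p_x]\cup[p_x,y]$ within $2\Dell$'' has no justification: the thin triangles you did build, $\Delta(x,p_x,p_y)$ and $\Delta(y,p_x,p_y)$, control only the geodesics $[x,p_x]$, $[x,p_y]$, $[y,p_x]$, $[y,p_y]$, $[p_x,p_y]$, none of which is $[x,y]$. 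Nor should one expect triangles with merely a vertex on $\ell$ to be uniformly thin in a non-strictly convex rank-one geometry (think of $x,y$ deep inside a properly embedded simplex far from $\ell$ --- in that situation, consistently, their projections are close together). The hypothesis $\hil(\pi_\ell(x),\pi_\ell(y))\geq C$ must therefore enter the argument in an essential way; in your proposal it is only used to ``rule out the degenerate case,'' so the deduction as written would prove a false statement about all triangles with a vertex on $\ell$.

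This is precisely where the paper abandons the direct synthetic route and argues by contradiction with a limiting argument: assuming the contraction condition fails, it takes sequences $x_n,y_n$ with $\hil(\pi(x_n),\pi(y_n))\geq n$ and $\hil([x_n,y_n],\pi(x_n))\geq n$, translates by $\langle\g\rangle$ so that $\pi(x_n)\to\alpha\in\ell$, and extracts limits in $\clOm$. The rank-one hypothesis is then used twice more, in ways your proposal never touches: via Proposition \ref{line-in-bdry-0} (sequences at bounded distance have limits in a common face) and, crucially, via Proposition \ref{prop:rank-one-properties}(4), i.e.\ $F_\Om(b)=\{b\}$ for an endpoint $b$ of a rank-one axis, which forces $\lim x_n=\lim y_n=b$; the final contradiction comes from applying the edge-on-$\ell$ estimate of Lemma \ref{lem:rank1-axis-contracting}(1) to the segments $[x_n,\pi(y_n)]$, whose limit would have to meet $\Om$ while also being the single boundary point $\{b\}$. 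To complete your proof you need either this compactness-plus-extreme-point mechanism or a genuinely new argument controlling geodesics $[x,y]$ with both endpoints off the axis; the quadrilateral fellow-traveling step does not follow from Theorem \ref{thm:Dl-thin}.
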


The key step  will be part (2) of the following Lemma \ref{lem:rank1-axis-contracting} which shows that a rank one axis is $\PS^\Om$-contracting. First, we construct suitable projection maps on a rank one axis. Recall the notion of closest-point projection on closed convex subsets, particularly Corollary \ref{cor:P-sigma}.

\begin{definition}\label{defn:projection-map}
Suppose $\Om$ is a Hilbert geometry, $\ell$ is a bi-infinite projective geodesic in $\Om$ and $\sigma: \Rb \to \ell$ is its unit speed parametrization. Then $\Pi_{\ell}(x)=[\sigma(T_x^-),\sigma(T_x^+)]$ for $T_x^-,T_x^+ \in \Rb$. We define the projection map $\pi_{\ell}: \Omega \to \ell$ as
\begin{equation*}
\pi_{\ell}(x):=\sigma \Big( \dfrac{T_x^-+T_x^+}{2}\Big).
\end{equation*}
\end{definition}

\noindent We now establish some properties of the map $\pi_{\ell}$ when $\ell$ is a rank one axis.

\begin{lemma}\label{lem:rank1-axis-contracting}
If $\ell \subset \Om$ is a rank one axis, then there exists $\Cell \geq 0$ such that
\begin{enumerate}
\item if $x \in \Om$ and $z \in \ell$,  then there exists $p_{xz} \in [x,z]$ such that 
\begin{equation*}
\hil \big( \pi_\ell(x),p_{xz}\big) \leq 3 \Cell.
\end{equation*}
\item $\ell$ is $\PS^\Om$-contracting with constant $\Cell$ and the map $\pi_\ell$.
\end{enumerate} 
\end{lemma}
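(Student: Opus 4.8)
My plan is to handle the two parts in order, using Proposition \ref{Dl_1} (the $\Bell$-neighborhood estimate for triangles with an edge on $\ell$) together with the minimizing property of the closest-point projection $\pi_\ell$ and the fellow-traveling estimate of Proposition \ref{prop:iz-dist-estimate}.

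For part (1), fix $x \in \Om$ and $z \in \ell$ and set $w = \pi_\ell(x)$, the degenerate cases $x \in \ell$ or $w = z$ being immediate. Since $w, z \in \ell$, the triangle $\Delta(x,z,w)$ has the edge $[z,w]$ on $\ell$, so Proposition \ref{Dl_1} gives $[z,w] \subset \Nc_{\Bell}\big([x,z] \cup [x,w]\big)$. I would then cover $[z,w]$ by the two closed sets $A = \{u \in [z,w] : \hil(u,[x,z]) \le \Bell\}$ and $B = \{u \in [z,w] : \hil(u,[x,w]) \le \Bell\}$; since $z \in A$, $w \in B$, and $[z,w]$ is connected, $A$ and $B$ must meet at some $u_0$, giving $\alpha \in [x,z]$ and $\beta \in [x,w]$ with $\hil(u_0,\alpha), \hil(u_0,\beta) \le \Bell$. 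The point is now to invoke minimality: because $w = \pi_\ell(x)$ realizes $\hil(x,\ell)$ and $u_0 \in \ell$, the chain $\hil(x,w) \le \hil(x,u_0) \le \hil(x,\beta) + \Bell = \hil(x,w) - \hil(\beta,w) + \Bell$ forces $\hil(\beta,w) \le \Bell$, hence $\hil(u_0,w) \le 2\Bell$ and $\hil(w,\alpha) \le 3\Bell$. Taking $p_{xz} = \alpha$ and $\Cell \ge \Bell$ proves part (1).

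For part (2), the first clause of Definition \ref{defn-contraction-sisto} is trivial since $\pi_\ell(x) = x$ for $x \in \ell$. For the second, write $p = \pi_\ell(x)$, $q = \pi_\ell(y)$, assume $\hil(p,q)$ is large, and aim to show $\sigma = [x,y]$ passes within $\Cell$ of both $p$ and $q$. I would first record that part (1) makes $\pi_\ell$ coarsely Lipschitz along any geodesic: applying part (1) to a point $m$ with $z = \pi_\ell(m')$ and using minimality as above yields, after a short computation, $\hil(\pi_\ell(m),\pi_\ell(m')) \le 2\hil(m,m') + 6\Bell$ for $m,m' \in \sigma$. The crux is then a bounded-geodesic-image statement: there exist constants $C_0, C_0'$ so that if $\sigma$ stays at distance $\ge C_0$ from $\ell$, then $\hil(p,q) \le C_0'$. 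Granting this, I choose $\Cell > \max(C_0 + C_0', C_0')$; when $\hil(p,q) \ge \Cell$ the geodesic $\sigma$ must enter $\Nc_{C_0}(\ell)$, and taking $m^\ast$ to be the first entry point, the sub-geodesic $[x,m^\ast]$ stays far from $\ell$, so bounded image gives $\hil(p,\pi_\ell(m^\ast)) \le C_0'$ and hence $\hil(\sigma,p) \le \hil(m^\ast,\pi_\ell(m^\ast)) + \hil(\pi_\ell(m^\ast),p) \le C_0 + C_0' \le \Cell$; the last-exit point handles $q$ symmetrically.

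The main obstacle is the bounded-geodesic-image estimate, and this is precisely where the global non-hyperbolicity of $\Om$ bites: since thinness (Theorem \ref{thm:Dl-thin}) is available only for triangles having an edge on $\ell$, one cannot apply it to $\Delta(x,y,q)$ or $\Delta(x,y,p)$ and so cannot chase thin triangles directly. I would instead argue by contradiction and compactness, in the spirit of the proof of Proposition \ref{Dl_1}: given geodesics $\sigma_n = [x_n,y_n]$ with $\hil(\sigma_n,\ell) \to \infty$ yet $\hil(\pi_\ell(x_n),\pi_\ell(y_n)) \to \infty$, I translate by powers of the rank-one isometry $\g$ (which acts cocompactly on $\ell$) to pin a basepoint on $\ell$, pass to subsequential limits in $\clOm$, and use the minimizing property of $\pi_\ell$ together with $\hil(\sigma_n,\ell) \to \infty$ to force the relevant limiting segments, including those running to an endpoint $a$ (or $b$) of $\ell$, to lie in $\bdry$. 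This would exhibit a half triangle on the axis endpoints, contradicting that $\ell$ is a rank-one axis (Proposition \ref{prop:rank-one-properties}, parts (4)--(5)). Arranging the limit so that the two divergences---$\sigma_n$ far from $\ell$ and the two projections far apart---combine into a genuine boundary half triangle is the delicate point I expect to occupy most of the work.
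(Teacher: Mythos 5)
Part (1) of your proposal is correct and is essentially the paper's own argument: both apply the thinness statement for triangles with an edge on $\ell$ (Proposition \ref{Dl_1}, equivalently Theorem \ref{thm:Dl-thin}) to $\Delta(x,\pi_\ell(x),z)$, locate a point of the side lying on $\ell$ that is close to both other sides, and then use the fact that $\pi_\ell(x)$ realizes $\hil(x,\ell)$ to convert this into the stated estimate; your constant bookkeeping is fine. Your reduction of part (2) to a bounded-geodesic-image statement, via first-entry and last-exit points of $[x,y]$ in $\Nc_{C_0}(\ell)$, is also valid (the coarse-Lipschitz digression is true but never used).

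The genuine gap is that the bounded-geodesic-image estimate --- which is where the entire content of part (2) sits --- is never proved; you explicitly defer it as ``the delicate point.'' Moreover, the mechanism you gesture at (forcing limit segments into $\bdry$ so as to exhibit a half triangle) does not by itself close the argument: after translating by the rank-one isometry $\g$ and passing to limits $x=\lim x_n$, $y=\lim y_n$, $\alpha=\lim \pi_\ell(x_n)\in\ell$, one gets $[x,y]\subset\bdry$ and $\pi_\ell(y_n)\to b$, an endpoint of $\ell$, quite easily --- but nothing in your sketch ties the location of $b$ to the limit segment $[x,y]$, and that link is the whole difficulty. What closes it (this is how the paper argues, directly against condition (2) of Definition \ref{defn-contraction-sisto}, and the same steps handle your BGI sequences, where in fact both $\hil(x_n,\ell)$ and $\hil(y_n,\ell)$ diverge) is to bootstrap part (1) along the \emph{mixed} segments: part (1) applied to $[y_n,\pi_\ell(x_n)]$ and to $[x_n,\pi_\ell(y_n)]$ produces points within $3\Cell$ of $\pi_\ell(y_n)$, resp.\ of $\pi_\ell(x_n)$; Proposition \ref{line-in-bdry-0} then places their limits in $F_{\Omega}(b)$, resp.\ $F_{\Omega}(\alpha)$; since $F_{\Omega}(b)=\{b\}$ by Proposition \ref{prop:rank-one-properties} part (4), the first limit equals $b$, and since it lies on $[y,\alpha]$ with $\alpha\in\Om$ this forces $y=b$, whence $x=y=b$ using $[x,y]\subset\bdry$ and part (4) again; finally the segments $[x_n,\pi_\ell(y_n)]$ converge to the single boundary point $\{b\}$ while carrying points whose limit lies in $F_{\Omega}(\alpha)=\Om$, a contradiction. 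Without these ingredients --- part (1) applied to mixed segments, Proposition \ref{line-in-bdry-0}, and the triviality of faces at the endpoints of a rank-one axis --- the proposal stops exactly at the step that constitutes the lemma.
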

\begin{proof} 
(1) ~~ Let $x \in \Om$ and $z \in \ell$. Choose any $C_{\ell} \geq D_\ell$, where $D_\ell$ is the constant from  \cref{prop:Dl-thin}. \cref{prop:Dl-thin} implies that $\Delta(x, \pi_\ell(x),z)$ is $\Dell$-thin.  By \cref{obs:thinness_implies_special_points}, there exists $p \in [x,\pi_{\ell}(x)]$, $q \in [\pi_\ell(x), z]$ and $r \in [z,x] $ such that 
\begin{equation*}
\hil(q,p) \leq \Dell ~~\text{ and }~~ \hil(q,r) \leq \Dell.
\end{equation*}
Then
\begin{align*}
\hil \big( \pi_\ell(x),p \big) & = \hil \big( \pi_\ell(x),x \big) - \hil \big( p,x \big) \\
& \leq \hil \big( q,x \big) - \hil \big( p,x \big) \leq \hil \big( p,q \big) \leq \Dell.
\end{align*}
Thus
\begin{align*}
\hil \big( \pi_\ell(x),q \big) \leq \hil \big( \pi_\ell(x),p \big) + \hil \big( q,p \big) \leq 2 \Dell.
\end{align*}
Set $p_{xz}:=r.$ Then
\begin{align*}
\hil \big( \pi_\ell(x),p_{xz} \big) \leq \hil \big( \pi_\ell(x), q \big)+ \hil \big( q,r \big) \leq 3 \Dell \leq 3 C_\ell.
\end{align*}

(2) ~~  Set $\pi:= \pi_\ell$ for ease of notation. Let us label the endpoints of $\ell$ so that $\ell :=(a,b).$ Observe that we only need to verify (2) in Definition \ref{defn-contraction-sisto}. Suppose, for a contradiction, that it is not satisfied. Then, for every $n \in \Nb$, there exist $x_n,y_n \in \Om$ such that 
\begin{equation*}
\hil \big( \pi(x_n), \pi(y_n) \big) \geq n
\end{equation*}
and
\begin{equation*}
\hil \big( [x_n,y_n],\pi(x_n) \big) \geq n.
\end{equation*} 

Since $\ell$ is a rank one axis, fix a rank one isometry $\g$ whose axis is $\ell$. Then $\g \circ \pi =\pi \circ \g.$ Hence, up to translating $x_n$ and $y_n$ using elements in $\langle \g \rangle$, we can assume that  $\alpha:=\lim_{n \to \infty} \pi(x_n)$ exists in $\ell \subset \Om$. Up to passing to a subsequence, we can further assume that the following limits exist in $\clOm$:  $x:=\lim_{n \to \infty} x_n$, $y:=\lim_{n \to \infty} y_n$, $\beta:= \lim_{n \to \infty} \pi(y_n)$. Then $\lim_{n \to \infty}[x_n,y_n] =[x,y]$. We will now show that 
\begin{equation}\label{eqn:[x,y]-in-bdry}
[x,y] \subset \bdry.
\end{equation}
This follows from the following estimate:
\begin{align*}
\hil(\alpha,[x,y]) =\lim_{n \to\infty} \hil(\alpha,[x_n,y_n]) 
&\geq \lim_{n \to \infty} \left( \hil \big( \pi(x_n),[x_n,y_n] \big)- \hil \big( \pi(x_n),\alpha \big) \right) \\
&\geq \lim_{n \to \infty} \left( n - \hil \big( \pi(x_n),\alpha \big) \right)=
\infty.
\end{align*}
We also observe that:
\begin{align*}
\hil(\alpha, \beta) =\lim_{n \to \infty} \hil \big( \alpha,\pi(y_n) \big)  &\geq \lim_{n \to \infty} \left( \hil \big( \pi(x_n),\pi(y_n) \big) - \hil \big( \pi(x_n),\alpha \big) \right) \\
&\geq \lim_{n \to \infty} \left( n - \hil \big( \pi(x_n),\alpha \big)\right) = \infty. 
\end{align*}
Thus $\beta \in \bdry$. However, since $\beta \in \overline{\ell}=[a,b]$, $ \beta \in \big\{ a, b \big\}$. Thus, up to switching the labels of the endpoints of $\ell$, we can assume that
\begin{equation}\label{eqn:beta=b}
\beta=b.
\end{equation}

\begin{claim}\label{claim:x=y=b}
$x=y=b.$
\end{claim}
\begin{proof}[Proof of Claim]\let\qed\relax
We first show that $y=b$. Since $y_n \in \Om$ and $\alpha \in \ell$, part (1) of Lemma \ref{lem:rank1-axis-contracting} implies that there exists $p_n \in [y_n, \alpha]$ such that 
\begin{equation*}
\hil \big( p_n, \pi(y_n) \big) \leq 3 \Cell.
\end{equation*} 
Up to passing to a subsequence, we can assume that $p:=\lim_{n \to \infty} p_n$ exists in $\clOm.$ Then, by Proposition \ref{line-in-bdry-0}, $p \in F_\Om(\beta).$ By equation \eqref{eqn:beta=b}, $\beta=b$ which implies $p \in F_\Om(b)$. Since $b$ is an endpoint of the rank one axis $\ell$, part (4) of Proposition \ref{prop:rank-one-properties} implies that $F_{\Om}(b)=b.$ Thus $p=b$. On the other hand, since $p_n \in [y_n,\alpha]$,  we have $p \in [y,\alpha].$ Since $p =b $, $p \in \bdry$. Thus, 
\begin{align*}
p \in [\alpha,y] \cap \bdry=\big\{ y\big\}.
\end{align*}
 Hence, 
 \begin{align*}
 y=p=b.
 \end{align*}
 
 We now show that  $x=b$. By equation \eqref{eqn:[x,y]-in-bdry}, $[x,y] \subset \bdry$. But since $y=b$, this contradicts part (4) of Proposition \ref{prop:rank-one-properties} unless $x=y$. Hence $x=y=b$. This concludes the proof of Claim \ref{claim:x=y=b}.
\end{proof}

Consider the points $x_n \in \Om$ and $\pi(y_n) \in \ell$. By part (1) of Lemma \ref{lem:rank1-axis-contracting}, there exists $q_n \in \big[ x_n,\pi(y_n) \big]$ such that $\hil \big( \pi(x_n),q_n \big) \leq 3\Cell.$ Up to passing to a subsequence, we can assume that  $q:=\lim_{n \to \infty} q_n$ exists in $\clOm.$ Then by Proposition \ref{line-in-bdry-0},  $q \in F_{\Om}(\alpha)=\Om.$ Thus $\lim_{n \to \infty} [x_n,\pi(y_n)]$ is a projective line segment containing $q$ and hence intersects $\Om$ . However,  $\lim_{n \to \infty}[x_n,\pi(y_n)]=[x,\beta]=\{b\} \subset \bdry$. This is a contradiction. 
\end{proof}

 We will now apply Lemma \ref{lem:rank1-axis-contracting} to prove Theorem \ref{rank1-implies-contraction}. Suppose $\g \in \Aut(\Om)$ is a rank one isometry. Then $\tau_\Om( \g)>0$ which implies that $\g$ has infinite order. By part (2) of Proposition \ref{prop:rank-one-properties}, $\g$ has a unique axis $\ell_{\g}$ along which $\g$ acts by a translation. Fix $x_0 \in \ell_\g$. As $\langle \g \rangle$ acts co-compactly on $\ell_{\g}$, $\langle \g \rangle \cdot x_0$ is a quasi-isometric embedding of $\Zb$ in $\Om$. Part (2) of Lemma \ref{lem:rank1-axis-contracting} implies that $\ell_\g$ is a $\PS^\Om$-contracting set. Thus $\g$ is a contracting element for $(\Om,\PS^\Om)$ (cf. \ref{defn-contracting-element-sisto}).

\section{Contracting isometries are rank one }
\label{sec:contracting-implies-rank-one}

In this section, we prove the other implication of Theorem \ref{thm:contracting-iff-rank-one}. Fix a Hilbert geometry $\Om$ and let $\PS^\Om:=\big\{ [x,y]: x, y \in \Om \big\}$.

\begin{theorem}\label{thm:contracting-implies-rank-1}
If $\g \in \Aut(\Om)$ is a contracting element for $(\Om, \PS^{\Om})$, then $\g$ is a rank one isometry.
\end{theorem}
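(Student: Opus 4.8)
My plan is to verify the three defining conditions of Definition \ref{defn:rank-one-isometry} for $\g$, in order. Condition (1) is immediate: since $\g$ is a contracting element, Proposition \ref{prop:on-contracting-elements}(1) gives $\tau_{\Om}(\g)>0$, so by Proposition \ref{existence-pseudo-axis} the element $\g$ is positively bi-semi-proximal and, by Observation \ref{obs:axis}(1), possesses a principal pseudo-axis $Q_\g=[a,b]$ with $a$ attracting and $b$ repelling. Throughout I will freely use that $\langle\g\rangle x_0=\Amin$ is the minimal $\PS^{\Om}$-contracting set (Proposition \ref{prop:on-contracting-elements}(2)).

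For condition (2) the plan is to upgrade $Q_\g$ to a genuine axis. By the Morse lemma for contracting quasigeodesics (Proposition \ref{sisto-morse}), the orbit $\Amin$ is Morse, so the projective geodesics $[\g^{-n}x_0,\g^n x_0]$ fellow-travel $\Amin$ with constants independent of $n$. Combining this with the $\omega$-limit analysis of Proposition \ref{prop:omega-limit-gamma}, the rays $\{\g^n x_0\}_{n\ge 0}$ and $\{\g^{-n}x_0\}_{n\ge 0}$ accumulate in $E_\g^+$ and $E_\g^-$ respectively, and the Morse property forces each to converge to a single point, necessarily $a$ and $b$. Hence the segments $[\g^{-n}x_0,\g^n x_0]$ converge in $\RP$ to $[a,b]$; since each such segment meets $\Amin\subset\Om$ within a uniformly bounded Hilbert distance and a boundary point would be at infinite distance, a limiting point of these witnesses lies in $\Om$, so $(a,b)\subset\Om$ by Proposition \ref{prop:faces}(3). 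Thus $Q_\g$ is an axis, principal by Observation \ref{obs:axis}(3); I expect this to be exactly the content of the principal pseudo-axis lemma (Lemma \ref{lem:axis-of-contracting-elements}). A key auxiliary point, for which I would use the Morse property in the role played by ``no half triangle'' in the proof of Proposition \ref{prop:rank-one-properties}(1), is that $\g$ is in fact positively biproximal, i.e. $E_\g^+=\{a\}$, $E_\g^-=\{b\}$ with size-one Jordan blocks, so that $\g$ has clean north--south dynamics.

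For condition (3) I would argue by contradiction. Suppose an axis $\ell_\g=(a,b)$ lies in a half triangle (Definition \ref{defn:half-T}): there is $z\in\bdry\setminus\{a,b\}$ with $[a,z]\cup[b,z]\subset\bdry$. Since $\bdry$ is $\g$-invariant and $\g a=a$, $\g b=b$, we have $\g^{-n}[a,z]=[a,\g^{-n}z]\subset\bdry$ and $\g^{n}[b,z]=[b,\g^{n}z]\subset\bdry$ for all $n$. Using the biproximal north--south dynamics and Propositions \ref{prop:omega-limit-gamma} and \ref{no-face-shared-by-E+-E-} to pin down the limits: if the lift of $z$ has nonzero component along $a$ then $\g^n z\to a$, so the closed segments $[b,\g^n z]$ converge to $[b,a]=[a,b]\subset\bdry$, contradicting $(a,b)\subset\Om$; symmetrically, if $z$ has nonzero component along $b$ then $\g^{-n}z\to b$ and $[a,\g^{-n}z]\to[a,b]\subset\bdry$, the same contradiction. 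Here Proposition \ref{no-face-shared-by-E+-E-} is the tool that guarantees the orbits of faces meeting $E_\g^-$ cannot drift into $E_\g^+$, keeping these limits under control.

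The remaining, and genuinely hard, case is when $z$ lies purely in the ``middle'' of the spectrum (zero component along both $a$ and $b$): then the orbit of $z$ stays in the complementary invariant subspace and no boundary segment degenerates onto $[a,b]$. This is precisely the honest half-flat, and I would dispatch it geometrically. The open triangle $\{\pi(\alpha\T a+\beta\T b+\gamma\T z):\alpha,\beta,\gamma>0\}$ lies in $\Om$ and behaves as a flat half-plane attached to $\ell_\g$ along $(a,b)$. For $p\in(a,b)$ and large $n$ I would build a detour path running from $\g^{-n}p$ up the fiber $(p,z)$ to a point $w_n$ with $\hil(p,w_n)\sim n\,\tau_{\Om}(\g)$ and back down to $\g^{n}p$; because the two edges $[a,z],[b,z]$ lie in $\bdry\Om$, the relevant cross-ratios are controlled by the sub-simplex, so this concatenation should be a $(K,C)$-quasigeodesic with $K,C$ independent of $n$ whose distance from $\ell_\g$ grows linearly in $n$, contradicting the Morse property of $\ell_\g$ from Proposition \ref{sisto-morse}. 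The main obstacle is exactly this quantitative estimate: verifying that the turn at $w_n$ retains uniform quasigeodesic constants as $w_n\to z$ (equivalently, that the two ideal edges at $z$ span a genuinely two-dimensional direction so the path does not fold back), the Hilbert-geometric analogue of the bounded-angle condition in the $\CAT(0)$ half-flat argument. Once conditions (1)--(3) hold, $\g$ is rank-one by Definition \ref{defn:rank-one-isometry}, and the uniqueness of the axis recorded in Proposition \ref{prop:rank-one-properties} then follows a posteriori.
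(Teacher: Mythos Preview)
Your proposal has two genuine gaps, and the paper's route avoids both by organizing the argument differently.

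First, you assert early that $\g$ is positively biproximal, saying you would use the Morse property of $\Amin$ ``in the role played by `no half triangle' in the proof of Proposition \ref{prop:rank-one-properties}(1).'' But that proof uses the half-triangle hypothesis in a very specific way --- to conclude $(z_+,b)\subset\Om$ and $(z_-,b)\subset\Om$, push points along these segments by $\g^{m_k}$, and contradict maximality of $[z_-,z_+]$ --- and you supply no Morse substitute for this step. Without biproximality your ``easy cases'' for condition (3) also break: if $E_\g^-$ is larger than $\{b\}$, the orbit $\g^{-n}z$ need not converge to $b$, so the segments $[a,\g^{-n}z]$ need not degenerate onto $[a,b]$. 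Your claim in condition (2) that Morse ``forces each \emph{[orbit ray]} to converge to a single point'' has the same circularity.

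Second, you explicitly leave the ``hard case'' of condition (3) open: the uniform quasigeodesic estimate for the detour through the half-triangle is precisely the delicate point, and since only two of the three sides lie in $\bdry$ the triangle is not properly embedded, so the standard simplex comparison does not apply directly.

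The paper sidesteps both issues. It never establishes biproximality in advance; instead Lemma \ref{lem:axis-of-contracting-elements} is formulated for \emph{arbitrary} subsequential orbit limits $p\in E_\g^+$ and $q\in E_\g^-$, and its part (2) rules out a half triangle around $(p,q)$ by using the contracting projection $\pi_{\Amin}$ directly rather than a Morse detour. The key move is to project points $z_k\to z$ onto $\Amin$, show via Definition \ref{defn-contraction-sisto}(2) that the limit $w=\lim\pi(z_k)$ lies in $F_\Om(z)\cap(E_\g^+\sqcup E_\g^-)$, and then combine Proposition \ref{prop:faces}(4) with part (1) of the same lemma to reach a contradiction. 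The axis $(a,b)\subset\Om$ is then obtained by a two-step bootstrap $(p,q)\leadsto(p,b)\leadsto(a,b)$, applying the lemma at different basepoints; biproximality is never used and is recovered only \emph{a posteriori} from Proposition \ref{prop:rank-one-properties}(1). So what you guessed to be ``exactly the content'' of Lemma \ref{lem:axis-of-contracting-elements} is in fact both more general (arbitrary $p,q$) and stronger (it already contains the half-triangle exclusion) than the statement you outlined.
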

We begin by recalling a result of Sisto which says that contracting elements are  `Morse' in the following sense.

\begin{proposition}\cite[Lemma 2.8]{sisto_contracting_rw}\label{sisto-morse} If $\PS$ is a path system on $(X,\dist)$ and $\Ac \subset X$ is $\PS$-contracting with constant $C$, then there exists a constant $M=M(C)$ such that: if $\theta$ is a $(C,C)$-quasi-geodesic with endpoints in $\Ac$, then $\theta \subset \Nc_{M}(\Ac):=\left\{ x \in X : \dist(x, \Ac) < M \right\}.$
\end{proposition}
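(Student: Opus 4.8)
The statement asserts that a $\PS$-contracting set is \emph{Morse}: every $(C,C)$-quasigeodesic $\theta$ with endpoints on $\Ac$ stays in a uniform neighborhood $\Nc_{M(C)}(\Ac)$. The plan is to bound $\sup_{x\in\theta}\dist(x,\Ac)$ by a constant depending only on $C$ and the quasigeodesic constants $K,C$ of the path system. First I would record two soft consequences of Definition \ref{defn-contraction-sisto} that make $\pi_\Ac$ behave like a nearest-point projection. \emph{(i) Coarse Lipschitz:} if $\dist(\pi_\Ac(x),\pi_\Ac(y))\ge C$, choose $\sigma\in\PS$ from $x$ to $y$; by property (2) it meets the $C$-balls about $\pi_\Ac(x)$ and $\pi_\Ac(y)$, and since $\sigma$ is a $(K,C)$-quasigeodesic these two meeting points are at distance $\lesssim\dist(x,y)$, giving $\dist(\pi_\Ac(x),\pi_\Ac(y))\le\lambda\,\dist(x,y)+\lambda$ for some $\lambda=\lambda(K,C)$. \emph{(ii) Comparability:} $\dist(x,\pi_\Ac(x))$ is comparable to $\dist(x,\Ac)$, obtained by comparing $\pi_\Ac(x)$ with $\pi_\Ac(a)$ for a coarsely nearest $a\in\Ac$ and invoking property (1). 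These let me freely replace ``distance to $\Ac$'' by ``distance to $\pi_\Ac(x)$''.

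The engine of the proof is a \textbf{bounded projection lemma}: \emph{if $\sigma\in\PS$ and $\sigma\cap\Nc_{C}(\Ac)=\emptyset$, then $\diam\big(\pi_\Ac(\sigma)\big)\le C$.} Indeed, for any $x',y'\in\sigma$ the subpath of $\sigma$ between them again lies in $\PS$ (a path-system is closed under subpaths) and avoids $\Nc_C(\Ac)$; since $\pi_\Ac(x')\in\Ac$, this subpath stays more than $C$ from $\pi_\Ac(x')$, so the contrapositive of property (2) forces $\dist(\pi_\Ac(x'),\pi_\Ac(y'))<C$. Taking the supremum over $x',y'$ gives the claim. This is precisely the feature that fails for non-contracting sets (a line in the Euclidean plane has quasigeodesics of large projection-diameter staying far away), and it is where the whole argument lives.

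Next I would prove the Morse bound for paths already in $\PS$, by a farthest-point argument. Let $\tau\in\PS$ join $u,v\in\Ac$ and let $p\in\tau$ maximize $\dist(\cdot,\Ac)=:R$. Running along $\tau$ toward each endpoint, let $y,z\in\tau$ be the innermost points with $\dist(\cdot,\Ac)\le C$; these exist because the endpoints lie in $\Ac$, and the subpath $\tau[y,z]\in\PS$ avoids $\Nc_C(\Ac)$ on its interior while containing $p$. The bounded projection lemma gives $\dist(\pi_\Ac(y),\pi_\Ac(z))\le C$, whence by (i)--(ii) the distance $\dist(y,z)$ is bounded; as $\tau[y,z]$ is a $(K,C)$-quasigeodesic, its length, and therefore $R\le\dist(p,y)$, is bounded by a constant $M_0=M_0(C)$. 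Thus every $\PS$-path with endpoints in $\Ac$ lies in $\Nc_{M_0}(\Ac)$.

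The remaining task, which I expect to be the \textbf{main obstacle}, is the transfer to an arbitrary $(C,C)$-quasigeodesic $\theta$, which need not lie in $\PS$ nor even be continuous. I would discretize $\theta$ into points $w_0=u,\dots,w_N=v$ with $\dist(w_j,w_{j+1})$ uniformly bounded and join consecutive $w_j$ by short $\PS$-paths $\sigma_j$, then run the farthest-point/shoulder argument on $\theta$ itself: at a farthest point $p=\theta(s_0)$ with shoulders $y,z$ (innermost points at height $\le C$), the quasigeodesic inequalities force a length lower bound $\dist(y,z)\gtrsim R$, which must be reconciled with the contraction-controlled drift of $\pi_\Ac\circ\theta$ across the far part $\theta[y,z]$. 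The difficulty is exactly that the bounded projection lemma applies only to a \emph{single} $\PS$-path, whereas $\theta[y,z]$ is merely a concatenation of short $\PS$-pieces, so one cannot naively sum per-piece diameter bounds; overcoming this needs the coarse Lipschitz property (i) applied to the pieces $\sigma_j$ to prevent the projection of the excursion from spreading out, played against the length lower bound to bound $R$, with the non-continuity of $\theta$ absorbed by the discretization. In the application $\PS=\PS^{\Om}$ is a geodesic path system, so $K=1$ and all the constants above simplify considerably.
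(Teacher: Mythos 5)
A preliminary remark: the paper itself contains no proof of this proposition --- it is quoted verbatim from Sisto \cite[Lemma 2.8]{sisto_contracting_rw} --- so your attempt can only be measured against the standard (Sisto-style) argument. Your steps (i)--(ii), your bounded projection lemma, and your farthest-point argument for paths that themselves lie in $\PS$ are all correct, modulo routine adjustments of constants (strict versus non-strict inequalities, and the bounded jumps of possibly discontinuous quasigeodesics). Note that this special case already covers the paper's use of the proposition in Lemma \ref{lem:axis-of-contracting-elements}, where it is applied only to the projective geodesics $L_k \in \PS^{\Om}$; but the full statement for arbitrary $(C,C)$-quasigeodesics is what the Morse-geodesic application needs, and that is where your proposal breaks down.

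The gap is in the mechanism you propose for the transfer step. You reject ``summing per-piece diameter bounds'' and instead want to control the drift of $\pi_\Ac$ along the far excursion by the coarse Lipschitz property (i). This cannot work: since $\pi_\Ac$ moves points of $\Ac$ by at most $C$ and $\Ac$ is unbounded in the relevant case ($\Ac = \langle g \rangle x_0$ is a quasigeodesic), any coarse Lipschitz constant $\lambda$ for $\pi_\Ac$ satisfies $\lambda \geq 1$; but to contradict the quasigeodesic lower bound $\dist(y,z) \geq |s_z - s_y|/K - C$ you need the projection of the excursion to spread at a rate strictly below $1/K \leq 1$ per unit of parameter, which a bound of the form $\lambda \dist + \lambda$ with $\lambda \geq 1$ can never give. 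The correct mechanism is precisely the per-piece summation you dismiss, with one crucial quantitative twist: discretize the excursion at a \emph{large} scale $R_0$ comparable to $CK$, and take the shoulders $y,z$ at a constant height $H_0$ chosen so large that every joining path $\sigma_j \in \PS$ (whose diameter is at most $K R_0 + C$, roughly) still avoids $\Nc_C(\Ac)$. Then property (2) of Definition \ref{defn-contraction-sisto} gives $\dist\big(\pi_\Ac(w_j), \pi_\Ac(w_{j+1})\big) < C$ for each piece \emph{regardless of the piece size} --- this scale-invariance is exactly what contraction provides and coarse Lipschitzness does not --- so the total drift is at most $C\big(|s_z - s_y|/R_0 + 1\big)$, which is below $|s_z-s_y|/(2K)$ plus a constant once $R_0 \geq 2CK$. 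Since the shoulders lie at bounded height and hence at bounded distance from their projections, this contradicts the quasigeodesic lower bound unless $|s_z - s_y|$, and therefore $R$, is bounded. With that replacement your outline becomes a complete proof; as written, the key step rests on a tool that is quantitatively too weak to close the argument.
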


We use this Morse property to show that a contracting element has at least one axis and none of its axes are contained in half triangles in $\Om$. The first step is the following lemma. Recall the notation $E_\g^+, E_\g^-$ from \cref{defn:E_L_K_plus}.

\begin{lemma}\label{lem:axis-of-contracting-elements}
Suppose $\Om$ is a Hilbert geometry and $\g \in \Aut(\Om)$ is a contracting element for $(\Om, \PS^{\Om})$. If there exist  $x_0 \in \Om$ and two unbounded sequences of positive integers $\{n_k\}_{k \in \Nb}$ and $\{m_k\}_{ k \in \Nb}$ such that 
\begin{equation*}
p:=\lim_{k \to \infty} \g^{n_k}x_0 ~~~ \text{ belongs to } E_\g^+
\end{equation*}
and 
\begin{equation*}
q:=\lim_{k \to \infty} \g^{-m_k}x_0 ~~~\text{ belongs to } E_\g^-, 
\end{equation*} 
then
\begin{enumerate}
\item $(p,q) \subset \Om$, and 
\item $(p,q)$ is not contained in any half triangle in $\Om$.
\end{enumerate}
\end{lemma}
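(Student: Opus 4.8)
The plan is to run both parts through the minimal contracting set $\Amin=\langle\g\rangle x_0$, which by Proposition \ref{prop:on-contracting-elements} is $\PS^\Om$-contracting (say with constant $C$) and hence Morse with some gauge $M$ by Proposition \ref{sisto-morse}. Write $\sigma_k:=[\g^{-m_k}x_0,\g^{n_k}x_0]$; these are geodesics with both endpoints in $\Amin$, so Proposition \ref{sisto-morse} gives $\sigma_k\subset\Nc_M(\Amin)$. Since $p\in E_\g^+\subset\bdry$ and $q\in E_\g^-\subset\bdry$ (recall $\Om\cap\Pb(E_{\T\g})=\emptyset$), neither $n_k$ nor $m_k$ can stay bounded, so both tend to $\infty$ and $\sigma_k\to[q,p]$ in $\clOm$.

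For part (1), since $p,q\in\bdry$, Proposition \ref{prop:faces}(3) reduces the claim to $[p,q]\not\subset\bdry$; so assume for contradiction that $[p,q]\subset\bdry$, i.e.\ every $w\in(p,q)$ lies in $\bdry$. Fix such a $w$ and pick $w_k\in\sigma_k$ with $w_k\to w$. As $w_k\in\Nc_M(\Amin)$ there is $j_k\in\Zb$ with $\hil(w_k,\g^{j_k}x_0)\le M$, whence $\hil\big(\g^{-j_k}w_k,x_0\big)\le M$ by $\g$-invariance of $\hil$. By properness, $u_k:=\g^{-j_k}w_k$ subconverges to some $w_0$ with $\hil(w_0,x_0)\le M$, so $w_0\in\Om$ and in particular $w_0\notin\Pb(K_{\T\g})$. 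If $\{j_k\}$ were bounded, then along a subsequence $\g^{j_k}x_0$ is a fixed point $w_0'\in\Om$ and $\hil(w_k,w_0')\le M$, which is impossible since $w_k\to w\in\bdry$ forces $\hil(w_k,w_0')\to\infty$; hence $|j_k|\to\infty$. The Jordan-block computation of Section \ref{subsec:geom-omega-lim-set}, applied uniformly to $u_k\to w_0\notin\Pb(K_{\T\g})$, then forces $w=\lim\g^{j_k}u_k\in E_\g^+$ when $j_k\to+\infty$ and $w\in E_\g^-$ when $j_k\to-\infty$. Thus $(p,q)\subset E_\g^+\sqcup E_\g^-$, a disjoint union of two closed sets; as $(p,q)$ is connected it lies entirely in one of them, and letting $w\to q$ (resp.\ $w\to p$) puts $q\in E_\g^+$ (resp.\ $p\in E_\g^-$), contradicting $E_\g^+\cap E_\g^-=\emptyset$. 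Hence $[p,q]\not\subset\bdry$ and $(p,q)\subset\Om$.

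For part (2), I will argue by contradiction that a half triangle on $(p,q)$ violates the contraction of $\Amin$. First, part (1) together with the Morse estimate gives $(p,q)\subset\Nc_M(\Amin)$: each $w\in(p,q)$ is a limit of $w_k\in\sigma_k\subset\Nc_M(\Amin)$, and since $w\in\Om$ the nearby orbit points subconverge inside $\Om$ to a point of $\Amin$ within $M$ of $w$. Suppose now there is an apex $z\in\bdry\setminus\{p,q\}$ with $[p,z]\cup[q,z]\subset\bdry$ (Definition \ref{defn:half-T}). Choosing $\zeta_1\in(p,z)$ and $\zeta_2\in(q,z)$ close to $z$, the chord $(\zeta_1,\zeta_2)$ lies in the interior of $\CH\{p,q,z\}$, hence in $\Om$, while sitting near the apex and therefore far from the base $(p,q)$. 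Taking $s^1_k\to\zeta_1$ and $s^2_k\to\zeta_2$ inside $\Om$, the geodesics $[s^1_k,s^2_k]$ converge to $(\zeta_1,\zeta_2)$ and so eventually avoid $\Nc_{C}(\Amin)$, whereas the projections $\pi_{\min}(s^1_k)$ and $\pi_{\min}(s^2_k)$ escape to infinity along $\Amin$ in opposite directions (towards the $p$- and the $q$-end), so that $\hil\big(\pi_{\min}(s^1_k),\pi_{\min}(s^2_k)\big)\ge C$ for large $k$. This contradicts Definition \ref{defn-contraction-sisto}(2), which would force $[s^1_k,s^2_k]$ to pass within $C$ of $\pi_{\min}(s^1_k)$.

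The routine parts are the limiting/cross-ratio bookkeeping (in the spirit of Proposition \ref{no-face-shared-by-E+-E-}) and the uniform Jordan-block attraction used in part (1). The main obstacle is the final assertion in part (2): that the two projection sequences genuinely diverge in opposite directions along $\Amin$ while the connecting geodesics stay away from $\Amin$. This is precisely where contraction—rather than mere eigenvalue dynamics—must enter, since the diagonal isometry of the simplex $T_2$ admits configurations $[p,z]\cup[q,z]\subset\bdry$ with $(p,q)\subset\Om$ yet is not contracting; I expect to rule out the ``flat strip'' alternative (in which $(\zeta_1,\zeta_2)$ would remain boundedly close to $(p,q)$ and the projections would not diverge) by feeding the strong-contraction/bounded-projection property of $\Amin$ back into the half triangle, using Proposition \ref{no-face-shared-by-E+-E-} to pin down the escape directions of $\pi_{\min}(s^i_k)$.
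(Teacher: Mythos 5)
Your part (1) opens exactly as the paper's own proof does (Propositions \ref{prop:on-contracting-elements} and \ref{sisto-morse} give $\sigma_k\subset\Nc_M(\Amin)$, bounded-distance orbit points $\g^{j_k}x_0$, and $|j_k|\to\infty$), but the step you label routine --- ``the Jordan-block computation applied uniformly to $u_k\to w_0$ forces $\lim_k\g^{j_k}u_k\in E_\g^+$'' --- is a genuine gap: the pointwise statement behind Proposition \ref{prop:omega-limit-gamma} does not upgrade to varying base points, even ones converging to an interior point, when $\evmax(\T\g)$ carries a nontrivial Jordan block. Concretely, if $\T\g=\left(\begin{smallmatrix}2&1&0\\0&2&0\\0&0&1\end{smallmatrix}\right)$, $w_0=[e_1+e_3]$ and $u_k=[e_1+e_3-(2/j_k)e_2]$, then $u_k\to w_0\notin\Pb(K_{\T\g})$ while $\g^{j_k}u_k=[e_3-(2^{j_k+1}/j_k)e_2]\to[e_2]\notin\Pb(E_{\T\g})$. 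You cannot exclude such Jordan blocks here: at this stage a contracting element is only known to be positively bi-semi-proximal, and triviality of the extreme Jordan blocks is established later (Claim \ref{claim:jordan-block-size-1}) as a consequence of rank-one-ness, i.e.\ downstream of this very lemma, so invoking it would be circular. What your bounded-distance estimate honestly yields, via Proposition \ref{line-in-bdry-0}, is only that $w\in F_\Om(t)$ for $t:=\lim_k\g^{j_k}x_0\in E_\g^+\sqcup E_\g^-$. The paper closes from there not by placing $w$ itself in $E_\g^\pm$, but by playing the face relation against the dynamics on the segment: since $w\in(p,q)$ and $p,q$ are genuine eigendirections for the extreme eigenvalues, $\lim_k\g^{n_k}w=\lim_k\g^{n_k}p\in E_\g^+$, which contradicts Proposition \ref{no-face-shared-by-E+-E-}(2) when $t\in E_\g^-$ (the symmetric argument with $\g^{-m_k}$ handles $t\in E_\g^+$). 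Your proof needs this extra mechanism or a substitute; connectedness of $(p,q)$ alone cannot repair it.

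In part (2) the decisive claims are, by your own account, left unproven: that the chords $[s^1_k,s^2_k]$ eventually avoid $\Nc_C(\Amin)$, and above all that $\pi_{\min}(s^1_k)$ and $\pi_{\min}(s^2_k)$ escape along $\Amin$ in opposite directions so that $\hil\big(\pi_{\min}(s^1_k),\pi_{\min}(s^2_k)\big)\ge C$; you say you ``expect'' to establish the latter. That claim is the entire content of part (2), not bookkeeping. The paper's argument avoids the two-point configuration near the apex altogether: take a single sequence $z_k\in[\g x_0,z]\cap\Om$ with $z_k\to z$, write $\pi(z_k)=\g^{i_k}x_0$ and $w:=\lim_k\pi(z_k)$. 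Running the contraction inequality (Definition \ref{defn-contraction-sisto}(2)) against the geodesics $[z_k,\g^{n_k}x_0]$ shows $w\in\bdry$ (otherwise $[z,p]$ would meet $\Om$, contradicting the half triangle), hence $w\in E_\g^+\sqcup E_\g^-$ by Proposition \ref{prop:omega-limit-gamma}; running it against $[\g x_0,z_k]$ and using Proposition \ref{line-in-bdry-0} shows $w\in F_\Om(z)$. Then Proposition \ref{prop:faces}(4) forces $[p,w]\cup[q,w]\subset\bdry$, while part (1) of the lemma itself --- applied to the sequences $\g^{i_k}x_0\to w$ and $\g^{-m_k}x_0\to q$, or to $\g^{n_k}x_0\to p$ and $\g^{i_k}x_0\to w$ --- gives $(w,q)\subset\Om$ or $(p,w)\subset\Om$, a contradiction. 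This bootstrap of part (1) is precisely what replaces your unproven ``opposite escape directions'' claim; without it, part (2) does not close.
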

\begin{proof} Since $\g$ is a contracting element, \cref{prop:on-contracting-elements} implies that $\tau_{\Omega}(\g)>0.$  Thus $p \neq q$.

\noindent \textbf{(1)} \, Suppose this is false. Then $[p,q] \subset \bdry$. Choose any $r \in (p,q).$ Set $L_k:=\big[\g^{-m_k}x_0,\g^{n_k}x_0 \big]$. Then $L_{\infty}:=\lim_{k \to \infty} L_k=[q,p].$ Thus we can choose $r_k \in L_k$ such that $\lim_{k \to \infty} r_k=r.$ 

Since $\g$ is a contracting element, part (2) of \cref{prop:on-contracting-elements} implies that $\Amin:=\langle \g \rangle x_0$ is $\PS^\Om$-contracting. Since the $L_k$ are geodesics with endpoints in $\Amin$, Proposition \ref{sisto-morse} implies that there exists a constant $M$ such that for all $k \geq 1$, $L_k \subset \Nc_{M}(\Amin)$. Thus for every $k \geq 1$, there exists $\g^{t_k}x_0 \in \Amin$ such that 
\begin{equation}\label{eqn:ak-rk-M}
\hil(r_k,\g^{t_k}x_0) \leq M.
\end{equation}
Up to passing to a subsequence, we can assume that 
\begin{equation*}
t:= \lim_{k \to \infty} \g^{t_k}x_0
\end{equation*}
exists in $\clOm$. Since $r_k$ leaves every compact subset of $\Om$, $\{t_k\}$ is an unbounded sequence. Then by Proposition \ref{prop:omega-limit-gamma} part (1), $t \in \big( E_\g^+ \sqcup E_\g^- \big)$. On the other hand, by Proposition \ref{line-in-bdry-0} and  equation \eqref{eqn:ak-rk-M}, 
\begin{align}
\label{eqn:t_in_face}
t \in F_{\Omega}(r) \subset \bdry.
\end{align}  We now analyze the two possibilities:

\noindent \textbf{Case 1.} If possible, suppose $t \in E_{\g}^-.$ Then consider the sequence $\{\g^{n_k}r\}_{k\in \Nb}$. Up to passing to a subsequence, we can assume that $r_\infty:=\lim_{k \to \infty}\g^{n_k}r$ exists in $\bdry.$ Since $p \in E_\g^+$, $q \in E_\g^-$ and $r \in (p,q)$ with $n_k>0$, \cref{obs:iteration_jordan_blocks} part (2) implies that 
\begin{equation}\label{eqn:r-inf-in-E+}
r_\infty =\lim_{k \to \infty} \g^{n_k}r \in E_\g^+.
\end{equation}
To sum up, we have $r \in F_{\Omega}(t)$ where $t \in E_\g^-$ and $r_\infty=\lim_{k \to \infty}\g^{n_k}r$ (cf. \ref{eqn:t_in_face} and \ref{eqn:r-inf-in-E+}). Now we apply part (2) of \cref{no-face-shared-by-E+-E-} with $t, r$ and $\{n_k\}$ taking the role of $y,z$ and $\{i_k\}$ respectively. Then the conclusion is that $r_\infty \in E_\g^-$. This contradicts equation \eqref{eqn:r-inf-in-E+}.

\noindent \textbf{Case 2:} If possible, suppose $t \in  E_\g^+$. We can repeat the same arguments as in Case 1 by considering the sequence $\{\g^{-m_k}r\}_{k\in \Nb}$ and arrive at a contradiction (we need a version of \cref{no-face-shared-by-E+-E-} with $\g$ replaced by $\g^{-1}$; see \cref{rem:analogues_of_face_results}).

The contradiction to both of these possibilities finishes the proof of (1).

\medskip

\noindent \textbf{(2)} \, By part (1), $(p,q)\subset \Om.$ Suppose there exists $z \in \bdry$ such that $p, z, q$ form a half triangle in $\Om$. Choose any sequence of points $z_k \in [x_0,z] \cap \Om$ such that $\lim_{k \to \infty} z_k=z.$ Since $\g$ is contracting, part (2) of \cref{prop:on-contracting-elements} implies that $\Amin= \langle \g \rangle x_0$ is $\PS^\Om$-contracting (with  constant, say $C$). Thus there exists a projection $\pi: \Om \to \Amin$ that satisfies Definition \ref{defn-contraction-sisto}. We will analyze the sequence $\pi(z_k)$.  Since $\pi(z_k) \in \Amin$, there exists a sequence of integers $\{ i_k\}$ such that  $\pi(z_k)=\g^{i_k}x_0.$ Up to passing to a subsequence, we can assume that the following limit exists in $\clOm$: 
\begin{equation}\label{eqn:defn-w}
w := \lim_{k \to \infty} \pi(z_k) =\lim_{k \to \infty} \g^{i_k} x_0.
\end{equation}

\begin{claim}\label{claim:w-in-eigenspace}
$w \in \bdry$ and $w \in (E_\g^+ \sqcup E_\g^-).$
\end{claim}
\begin{proof}[Proof of Claim]\let \qed \relax
Recall that $\Gamma$ acts properly discontinuously on $\Omega$. Moreover, $\omega(\gamma,\Omega) \cup \omega(\gamma^{-1},\Omega) \subset  E_{\g}^+ \cup E_{\g}^-$ (cf. \cref{prop:omega-limit-gamma}). Thus it suffices to show that $\{i_k\}$ is an unbounded sequence. Suppose, on the contrary, that $\{ i_k\}$ is a bounded sequence. Then $w \in \Om$ and $\lim_{k \to \infty} \hil(w,\pi(z_k))=0$, see \eqref{eqn:defn-w}.

Recall that $\{n_k\}$ is the sequence such that $\g^{n_k} x_0 \to p \in \bdry$. We will prove this claim by comparing $\g^{i_k}x_0(=\pi(z_k))$ with $\g^{n_k}x_0$. We claim that $\hil(\pi(z_k),\pi(\g^{n_k}x_0)) \to \infty$ as $k \to \infty$. To prove this subclaim, first note that (1) of Definition \ref{defn-contraction-sisto} implies that $$\hil(\g^{n_k}x_0, \pi(\g^{n_k}x_0)) \leq C$$ because $\g^{n_k}x_0 \in \Amin$. The subclaim then follows from the next equation:
\begin{align*}
\lim_{k \to \infty} \hil(\pi(z_k),\pi(\g^{n_k}x_0) & \geq \lim_{k \to \infty} \Big( \hil(w,\g^{n_k}x_0) - \hil(w,\pi(z_k)) - \hil(\g^{n_k}x_0,\pi(\g^{n_k}x_0)) \Big) \\
& \geq \liminf_{k \to \infty}  \hil(w,\g^{n_k}x_0)-  C = \infty
\end{align*}
The above equation then implies that for $k$ large enough, $\hil \big( \pi(z_k),\pi(\g^{n_k}x_0) \big) \geq C$. Since $\pi$ is a projection into a $\PS^\Om$-contracting set,  (2) of Definition \ref{defn-contraction-sisto} implies that
\begin{equation*}
\hil \big( \pi(z_k),[z_k,\g^{n_k}x_0] \big) \leq C.
\end{equation*}
Thus 
 \begin{equation*}
 \hil(w,[z,p]) \leq \lim_{k \to \infty} \hil(\pi(z_k),[z_k,\g^{n_k}x_0]) \leq C.
 \end{equation*} 
 Then $[z,p] \cap \Om \neq \emptyset$. But since $p$, $z$, $q$ form a half triangle, $[z,p] \subset \bdry$. This is a contradiction and it concludes the proof of this claim.
\end{proof}

 \begin{claim}\label{claim:w-in-face-of-z}
$w \in F_{\Om}(z)$.
\end{claim}
\begin{proof}[Proof of Claim]\let\qed \relax
First observe that for $k$ large enough, $$\hil \big( \pi(z_k),\pi(x_0) \big) \geq C.$$ Indeed, this follows because $\pi(x_0) \in \Omega$ while $w = \lim_{k \to \infty} \pi(z_k) \in \bdry$. Again, as $\pi$ is a projection into a $\PS^\Om$-contracting set, we have 
 \begin{equation*}
 \hil(\pi(z_k),[ x_0,z_k]) \leq C.
 \end{equation*} 

 Choose $\eta_k \in [x_0, z_k]$ such that $\hil(\pi(z_k),\eta_k) \leq C.$ Up to passing to a subsequence, we can assume that $\eta:=\lim_{k \to \infty} \eta_k$ exists. By Proposition \ref{line-in-bdry-0}, $\eta \in F_{\Omega}(w).$ Since $w \in \bdry$, $\eta \in \bdry$ (Proposition \ref{prop:faces} part (1)). But $\eta \in [ x_0,z]$, which intersects $\bdry$ at exactly one point, namely $z$. Thus, $\eta =z$ implying $z \in F_{\Omega}(w)$, or equivalently, $w \in F_{\Omega}(z)$. This concludes the proof of Claim \ref{claim:w-in-face-of-z}.
\end{proof}

\medskip

Since $p,z,q$ form a half triangle, $[p,z] \cup [z,q] \subset \bdry$. By Claim \ref{claim:w-in-face-of-z}, $w \in F_{\Omega}(z)$. Then part (4) of Proposition \ref{prop:faces}  implies that 
\begin{equation}\label{eqn:pwq-in-bdry}
[p, w] \cup [q,w] \subset \bdry.
\end{equation}
Recall from Claim \ref{claim:w-in-eigenspace} that $\{i_k\}$ is an unbounded sequence and that $w = \lim_{k\to \infty} \gamma^{i_k}x_0$ lies in $E_\g^+ \sqcup E_\g^-$. We will now show that \eqref{eqn:pwq-in-bdry} contradicts this. 

Suppose, up to passing to a subsequence, that $\{i_k\}$ is a seqeunce of positive integers. Then $w \in E_\g^+$.  Since $\lim_{k\to\infty} \g^{i_k}x_0=w \in E_\g^+$ and $\lim_{k \to \infty} \g^{-m_k}x_0 = q \in E_\g^-$, then part (1) of Lemma \ref{lem:axis-of-contracting-elements} implies that $(w,q) \subset \Om$. This contradicts \eqref{eqn:pwq-in-bdry}.  On the other hand, if we suppose that $\{i_k\}$ is a sequence of negative integers, then $w \in E_\g^-$.  Then, by a similar reasoning, $(p,w) \subset \Om$ which again contradicts \eqref{eqn:pwq-in-bdry}.  These contradictions show that $p$, $z$, $q$ cannot form a half triangle. 
\end{proof}

We now prove Theorem \ref{thm:contracting-implies-rank-1} using the above lemma. Let $\g \in \Aut(\Om)$ be a contracting element for $(\Om, \PS^\Om)$. By part (1) of \cref{prop:on-contracting-elements}, $\tau_{\Om}(\g)>0$. The following will imply that $\g$ is a rank one isometry.\\

 \noindent  \textbf{$\mathbf{\g}$ has an axis:}  By Proposition \ref{prop:pseudo_axis}, there exists $(a,b) \subset \clOm$ with $a,b$ fixed points of $\g$ such that $a \in E_{\g}^+$ and $b \in E_{\g}^-$.  We will show that $(a,b) \subset \Om$; hence it  is  an axis of $\g$.
 
Fix $x_0 \in \Omega$. Proposition \ref{prop:omega-limit-gamma} part (1) implies  $\{\g^n x_0 :  n \in \Nb \}$ has an accumulation point $p$ in $E_\g^+$ and $\{\g^{-n} x_0: n \in \Nb\}$ has accumulation point $q$ in $E_\g^-$. By part (1) of Lemma \ref{lem:axis-of-contracting-elements}, $(p,q) \subset \Om.$ 

Note that $E_{\g}^+ \cap \clOm \subset \bdry$ (cf. \cref{claim:about_jordan_subspace_intersections}). Thus $[a,p] \subset \bdry$ as $a, p \in E_\g^+ \cap \bdry$. Similarly, $[b,q] \subset \bdry$. By Part (2) of Lemma \ref{lem:axis-of-contracting-elements}, $(p,q) \subset \Om$ is not contained in any half triangle in $\Om$. Since $[b,q] \subset \bdry$, this implies that $(p,b) \subset \Omega.$ 

We will use $(p,b) \subset \Om$ to derive that $(a,b) \subset \Om$. First note that since $b \in E_{\g}^-$ is the endpoint of a pseudo-axis, $b$ is a fixed point of $\g$.  Thus $\lim_{k \to \infty} \g^{-k} y' =b \in  E_\g^-$ for any $y' \in (p,b)$. We then note that $p$ is an `almost-fixed' point of $\g$, i.e. there exists $\{n_k\}$ with $n_k \to \infty$ such that $\lim_{k \to \infty} \g^{n_k}p=p \in E_{\g}^+$. Indeed, Proposition \ref{prop:omega-limit-gamma} part (3) implies that there exists a sequence of positive integers $\{n_k\}$ with $n_k\to \infty$ such that $\lim_{k \to \infty} \g|_{E_{\g}^+}^{n_k}=\id_{E_\g^+}$, i.e. $\lim_{k \to \infty} \g^{n_k}p=p$. Now pick $y_0 \in (p,b) \in \Om$.  The above discussion implies that $\lim_{k \to \infty} \g^{n_k}y_0=p \in E_\g^+$ while $\lim_{k \to \infty} \g^{-k} y_0 =b \in  E_\g^-.$ Then, by Part (2) of Lemma \ref{lem:axis-of-contracting-elements}, $(p,b) \subset \Om$ cannot be contained in a half triangle in $\Om$. But we know that $[a,p] \subset \bdry$. Thus, $(a,b) \subset \Omega$. \\

\noindent \textbf{None of the axes of $\g$ are contained in a half triangle in $\Om$:}  Let $(a',b') \subset \Omega$ be any axis of $\g$ with $a' \in E_{\g}^+$ and $b' \in E_{\g}^-$. If  $z_0 \in (a',b')$, then $\lim_{k \to \infty}\g^k z_0=a'$ and $\lim_{k \to \infty} \g^{-k}z_0=b'. $ Then, by Part (2) of Lemma \ref{lem:axis-of-contracting-elements}, $(a',b')$ cannot be contained in a half triangle in $\Om$.

\section{Acylindrical Hyperbolicity: Proof of Theorem \ref{thm:acy-hyp}}
\label{sec:acy_hyp}

Acylindrically hyperbolic groups are a generalization of non-elementary Gromov hyperbolic groups with many interesting examples, like mapping class groups of most finite-type surfaces, rank one $\CAT(0)$ groups that are not virtually cyclic, outer automorphisms of free groups on at least two generators and relatively hyperbolic groups with proper peripheral subgroups that are not virtually cyclic \cite[Appendix]{osin_acy_hyp}. In this section, we will add a new class of examples by showing that discrete groups acting on Hilbert geometries with at least one rank one isometry are  either virtually cyclic or acylindrically hyperbolic.

\subsection{Acylindrically hyperbolic groups}
We first recall some basic definitions about Gromov hyperbolic metric spaces (not necessarily proper) and we refer to  \cite{MG1987} for details. A geodesic metric space $(Y,\dist_Y)$ is called \emph{Gromov hyperbolic} if there exists $\delta \geq 0$ such that every geodesic triangle in $Y$ is $\delta$-thin (recall \cref{defn:thin_triangles}). If $(Y,\dist_Y)$ is Gromov hyperbolic, let $\partial Y$ denote the \emph{boundary of $Y$} defined via equivalence classes of sequences in $Y$ ``convergent at infinity", see \cite[Section 1.8]{MG1987}. We remark that this definition of $\partial Y$ does not require that $Y$ is a proper metric space. 

If $G$ acts isometrically on a Gromov hyperbolic space $(Y,\dist_Y)$, let $\Lambda_G(Y) \subset \partial Y$ denote the \emph{limit set} of the $G$-action (i.e, $\Lambda_G(Y)$ is the set of accumulation points  in $\partial Y$ of any $G$ orbit in $Y$). The action is called \emph{non-elementary} if $\#(\Lambda_G(Y))=\infty$; see \cite{osin_acy_hyp} for details.

Finally we define the notion of acylindrical actions on a metric space (not necessarily Gromov hyperbolic). An isometric action of a group $G$ on a metric space $(Y,\dist_Y)$ is called \emph{acylindrical} if: for every $\e >0$, there exists $R_\e,N_\e >0$ such that if $x,y \in Y$ with $\dist_Y (x,y) \geq R_\e$, then
\begin{equation*}
\# \left\{ g \in G : \dist_Y (x,gx) \leq \e \text{ and } \dist_Y (y,gy)\leq \e \right\} \leq N_\e. 
\end{equation*}

\begin{definition}\label{defn-acy-hyp-1}
A group $G$ is called \emph{acylindrically hyperbolic} if it admits an isometric non-elementary acylindrical action on a (possibly non-proper) Gromov hyperbolic metric space $(Y,\dist_Y)$.
\end{definition}

A motivating example of acylindrically hyperbolic groups is a non-elementary Gromov hyperbolic group. Indeed, if $H$ is a finitely generated non-elementary Gromov hyperbolic group, then it has a non-elementary acylindrical action on its Cayley graph which is a Gromov hyperbolic metric space. More generally if $H$ is a finitely generated non-elementary relatively hyperbolic group with proper peripheral subgroups, then $H$ has an acylindrical action on its coned-off Cayley graph. Another interesting example is the mapping class group of a closed hyperbolic surface. It acts acylindrically and non-elementarily on the curve graph of the surface, which is a (non-proper) Gromov hyperbolic space.

Although \cref{defn-acy-hyp-1} of acylindrically hyperbolic groups  is perhaps the cleanest to state,  a characterization of acylindrically hyperbolic groups using contracting elements will be particularly well-suited for our purpose. We state such a characterization now which  follows directly from work of Osin and Sisto; a proof is included because we could not find a result stated in this form. 

\begin{theorem}\cite{osin_acy_hyp, sisto_contracting_rw}
\label{thm:contracting-equiv-acy-hyp}
Suppose $G$ has a proper isometric action on a geodesic metric space $(X, \dist)$, $(X,\PS)$ is a path system for $G$ and $g \in G$ is a contracting element for $(X,\PS)$. Then, either $G$ is virtually cyclic or $G$ is acylindrically hyperbolic. 
\end{theorem}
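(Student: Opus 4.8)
The plan is to assemble the statement from two external inputs: Sisto's structural analysis of contracting elements and Osin's characterization of acylindrical hyperbolicity via hyperbolically embedded subgroups. The guiding idea is that a contracting element for a proper action is a \emph{generalized loxodromic} element, so the desired dichotomy is exactly Osin's characterization, with the virtually cyclic alternative sharpened to virtually $\Zb$ using that $g$ has infinite order.

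First I would record that, by Proposition \ref{prop:on-contracting-elements}(1), $\tau_X(g) > 0$, so $g$ has infinite order (this is also built into Definition \ref{defn-contracting-element-sisto}). The key step is then the following: since $G$ acts properly by isometries on $(X,\dist)$ and $g$ is a contracting element for the path system $(X,\PS)$, the work of Sisto \cite{sisto_contracting_rw} shows that the maximal virtually cyclic subgroup $E(g) \le G$ containing $g$ is hyperbolically embedded, $E(g) \hookrightarrow_h G$; equivalently, $g$ is a generalized loxodromic element of $G$. Properness of the action is what upgrades the contracting (Morse) behavior of the orbit $\langle g \rangle x_0$, together with Proposition \ref{prop:on-contracting-elements}(2) identifying $\Amin$ as the minimal contracting orbit, to a WPD-type condition; the near-malnormality of $E(g)$ forced by the contraction constant controlling how paths fellow-travel the orbit is what makes the embedding hyperbolic. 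Since $E(g)$ contains $\langle g \rangle \cong \Zb$ and is virtually cyclic, it is virtually $\Zb$.

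With the hyperbolically embedded subgroup in hand, I would split into two cases. If $G$ is virtually cyclic, then, containing the infinite-order element $g$, it is infinite virtually cyclic, i.e.\ virtually $\Zb$, giving the first alternative. If $G$ is not virtually cyclic, then $E(g)$ is necessarily a proper subgroup (a virtually cyclic subgroup cannot equal the non-virtually-cyclic $G$) and it is infinite; thus $E(g)$ is a proper infinite hyperbolically embedded subgroup of the non-virtually-cyclic group $G$, and Osin's characterization \cite{osin_acy_hyp} yields that $G$ is acylindrically hyperbolic. This exhausts both alternatives and proves the theorem.

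I expect the main obstacle to lie entirely in the middle step: making precise that Definition \ref{defn-contracting-element-sisto} of a contracting element, together with properness of the action, fits the hypotheses under which $E(g) \hookrightarrow_h G$ in \cite{sisto_contracting_rw}. In particular one must identify $E(g)$ as the correct elementary closure of $\langle g \rangle$, verify that it is virtually cyclic, and check the WPD/almost-malnormality input required by the hyperbolic embedding; once these are quoted correctly, the case analysis and the appeal to Osin's theorem are routine. This matches the remark preceding the theorem that the result ``follows directly'' from Osin and Sisto but is hard to locate in this exact form, so the genuine content is the bookkeeping aligning the present path-system setup with the hypotheses of \cite{sisto_contracting_rw} and \cite{osin_acy_hyp} rather than any new geometric argument.
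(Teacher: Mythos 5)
Your proposal is correct and takes essentially the same route as the paper: the paper likewise forms $E(g)$ (defined there as the coarse stabilizer $\{h \in G : \dH(\pi_\Ac(h\Ac),\Ac)<\infty\}$ of the contracting set), quotes Sisto's theorem that $E(g)$ is an infinite, virtually $\Zb$, hyperbolically embedded subgroup containing $\langle g \rangle$, and then applies Osin's criterion that a proper infinite hyperbolically embedded subgroup forces acylindrical hyperbolicity when $G$ is not virtually $\Zb$. Your description of $E(g)$ as the maximal virtually cyclic (elementary) closure of $\langle g \rangle$ rather than via the Hausdorff-distance definition, and your explicit check that a virtually cyclic $G$ containing $g$ is virtually $\Zb$, are only cosmetic differences.
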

\begin{proof}[Sketch of proof of \cref{thm:contracting-equiv-acy-hyp}]
In \cite{osin_acy_hyp}, Osin introduces several characterizations of acylindrically hyperbolic groups that are equivalent to Definition \ref{defn-acy-hyp-1}.  The one that we will use (Proposition \ref{defn-acy-hyp-2}) requires the notion of  \emph{hyperbolically embedded subgroups}.  Results due to Osin and Sisto (cf. \ref{defn-acy-hyp-2} and \ref{prop:Eg}) will allow us to use this notion without defining it precisely. See \cite[Definition 2.8]{osin_acy_hyp}  or \cite[Definition 4.6]{sisto_contracting_rw} for details. 

In \cite{osin_acy_hyp}, Osin proves:
\begin{proposition}[{\cite[Theorem 1.2 and Definition 1.3]{osin_acy_hyp}} and Remark \ref{rem:acy-hyp-2}]
\label{defn-acy-hyp-2} 
A group $G$ is acylindrically hyperbolic if $G$ contains a proper infinite hyperbolically embedded subgroup.
\end{proposition}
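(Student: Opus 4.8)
The plan is to start from a proper infinite hyperbolically embedded subgroup $H \hookrightarrow_{h} (G,X)$ and manufacture an isometric, non-elementary, acylindrical action of $G$ on a hyperbolic space, which is exactly what Definition \ref{defn-acy-hyp-1} demands. First I would unwind the meaning of hyperbolically embedded: there is a subset $X \subseteq G$ with $G = \langle X \cup H \rangle$ such that the relative Cayley graph $\Gamma := \Cay(G, X \sqcup H)$ (with a distinct letter for each element of $X$ and of $H$) is Gromov hyperbolic, and the relative metric $\hat{d} \colon H \times H \to [0,\infty]$, measuring the length of a shortest $\Gamma$-path between two elements of $H$ that uses no edge labelled by an element of $H$, is locally finite. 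The left-translation action of $G$ on $\Gamma$ is then isometric and $\Gamma$ is hyperbolic.

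Next I would record the qualitative features of this action. Every $h \in H$ lies at $\Gamma$-distance one from the identity, so $H$ acts with bounded orbits and its elements are elliptic; since $H$ is infinite but proper in $G$, the orbit $G \cdot e$ has limit set of cardinality at least three in $\partial \Gamma$, so the action is non-elementary and $G$ contains elements $g$ acting loxodromically on $\Gamma$. The crucial point is that local finiteness of $\hat{d}$ forces each such loxodromic $g$ to satisfy the weak proper discontinuity (WPD) condition: for every $\e > 0$ and every pair of points sufficiently far apart along a quasi-axis of $g$, only finitely many elements of $G$ move both points by at most $\e$.

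The heart of the argument, and the step I expect to be the main obstacle, is upgrading this WPD action on $\Gamma$ to a genuinely acylindrical one, since the action on $\Gamma$ itself is in general not acylindrical. Here I would invoke the rotating-family and cone-off machinery of Dahmani--Guirardel--Osin: starting from the WPD loxodromic $g$, pass to its maximal elementary subgroup $E(g)$, which is proper because the action is non-elementary and infinite because it contains $\langle g \rangle$, realise $E(g)$ as hyperbolically embedded with respect to a suitable generating set $Y$, and then form $\Cay(G, Y)$ for a generating set engineered so that $G$ acts on it acylindrically while keeping $g$ loxodromic and the boundary non-elementary. Controlling the acylindricity constants through this cone-off construction is the technically delicate part.

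Finally I would conclude: the resulting action of $G$ on $\Cay(G, Y)$ is an isometric action on a geodesic hyperbolic space that is simultaneously non-elementary and acylindrical, so it witnesses Definition \ref{defn-acy-hyp-1}. Hence $G$ is acylindrically hyperbolic, which is the assertion of the proposition.
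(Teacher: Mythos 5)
The first thing to say is that the paper offers no proof of Proposition \ref{defn-acy-hyp-2} at all: it is quoted verbatim from Osin \cite{osin_acy_hyp} (Theorem 1.2, specifically the implication that a proper infinite hyperbolically embedded subgroup yields a non-elementary acylindrical action, together with Definition 1.3) and used as a black box. So what you have written is an attempted reconstruction of Osin's theorem itself, and judged on those terms it has genuine gaps at exactly the two places you flag as hard. (i) Your derivation of non-elementarity is not a proof: from ``$H$ infinite and proper'' you assert that the limit set of $G\cdot e$ in $\partial\Gamma$ has at least three points and that loxodromics exist. Neither follows from what precedes it. Unboundedness of $\Gamma=\Cay(G,X\sqcup H)$, the exclusion of the lineal and quasi-parabolic cases, and the existence of loxodromic elements all rest on the almost malnormality of hyperbolically embedded subgroups and on an explicit combinatorial construction of loxodromic elements (products of the form $ah_1ah_2\cdots$ with $h_i\in H$ of large relative $\hat d$-length), as in Dahmani--Guirardel--Osin; your sentence reverses the logical order by deducing loxodromics from a limit-set count that is itself unproved. (ii) The claim that local finiteness of $\hat d$ forces \emph{every} loxodromic element of the action on $\Gamma$ to be WPD is false as stated: for an arbitrary relative generating set $X$ the action on $\Cay(G,X\sqcup H)$ need not be acylindrical, and only specially constructed loxodromics are known to be WPD at this stage.

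Second, the mechanism you invoke for the acylindricity upgrade is misattributed. In Osin's paper the passage from a hyperbolically embedded $H\leq G$ to an acylindrical action is his Theorem 5.4: if $H\hookrightarrow_h(G,X)$, then there exists $Y\supseteq X$ with $H\hookrightarrow_h(G,Y)$ such that $G$ acts acylindrically on $\Cay(G,Y\sqcup H)$. This is proved by a direct combinatorial analysis of geodesics and isolated components in relative Cayley graphs, not by the Dahmani--Guirardel--Osin rotating-family/cone-off machinery, which is designed for small-cancellation quotients and does not by itself produce acylindrical actions of $G$. Note also that your detour through a WPD loxodromic $g$ and its elementary closure $E(g)$ is redundant here: that is the route one takes when starting from the weaker hypothesis of a single WPD element (the implication (AH2)$\Rightarrow$(AH3) in Osin's Theorem 1.2), whereas you are handed a proper infinite hyperbolically embedded subgroup $H$ already, so the Theorem 5.4 construction applies to $H$ directly; non-elementarity then follows because properness plus almost malnormality rules out $G$ being virtually cyclic, and the resulting acylindrical action has unbounded orbits. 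In short: your outline has the right skeleton, but the two load-bearing steps are respectively unjustified and attributed to the wrong machinery, while the paper itself simply cites the entire statement to \cite{osin_acy_hyp}.
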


So in order to prove that a group is acylindrically hyperbolic, it suffices to produce a proper infinite hyperbolically embedded subgroup. For this, we rely on a result of Sisto. 
\begin{proposition}[\protect{\cite[Theorem 4.7]{sisto_contracting_rw}}]\label{prop:Eg} Suppose $g \in G$ is a contracting element for $(X,\PS)$ and $\Ac \subset X$ is $\langle g \rangle$-invariant, $\PS$-contracting and has co-bounded $\langle g \rangle$ action. Then 
\begin{align*}
E(g):=  \{ h \in G : d^{\Haus} (\pi_{\Ac}(h\Ac),\Ac) < \infty \}
\end{align*}
is a hyperbolically embedded subgroup of $G$ which is infinite and contains $\langle g \rangle$ as a finite index subgroup, i.e. $E(g)$ is virtually cyclic.
\end{proposition}

Now let us summarize how these results give us our desired conclusion. Suppose $g \in G$ is a contracting element.  By \cref{prop:Eg}, $E(g)$ is an infinite hyperbolically embedded subgroup of $G$ which is virtually cyclic. Now note that if $G$ is virtually cyclic, there is nothing to prove. So suppose that $G$ is not virtually cyclic. Then $E(g) \subsetneq G$ as $E(g)$ is virtually cyclic. Thus $E(g)$ is a proper infinite hyperbolically embedded subgroup and Proposition \ref{defn-acy-hyp-2} implies that $G$ is an acylindrically hyperbolic group. See the following remark for further comments on the proof.
\end{proof}

\begin{remark}
\label{rem:acy-hyp-2}
Recall the alternate definition of an acylindrically hyperbolic group given by \cref{defn-acy-hyp-2}. A subgroup $H \leq G$ is \emph{proper infinite} if $H \subsetneq G$ and $H$ is infinite. Such proper infinite hyperbolically embedded subgroups are sometimes called \emph{non-degenerate} hyperbolically embedded subgroups in the terminology of \cite{osin_acy_hyp, DGO2017}. Notably, the existence of one such non-degenerate hyperbolically embedded subgroup $H \leq G$ implies the existence of non-Abelian free subgroups in $G$,  see \cite[Lemma 5.12, 5.4]{osin_acy_hyp} or \cite[Theorem 6.14]{DGO2017}. Thus $G$ is not virtually cyclic and contains infinitely many `independent loxodromic' elements \cite{osin_acy_hyp,DGO2017}. Roughly speaking, this is akin to producing non-Abelian free subgroups in any non-elementary Gromov hyperbolic group. 
\end{remark}

\subsection{Proof of \cref{thm:acy-hyp}}

We first recall the theorem. 

\noindent \textbf{Theorem \ref{thm:acy-hyp}.} \emph{If $(\Om, \G)$ is a rank one Hilbert geometry, then either $\G$ is virtually cyclic or $\G$ is an acylindrically hyperbolic group.}

The proof of \cref{thm:acy-hyp} will be immediate from \cref{thm:contracting-equiv-acy-hyp}, thanks to the well-developed machinery of acylindrically hyperbolic groups due to the work of many authors, see for instance \cite{osin_acy_hyp, DGO2017, sisto_contracting_rw, BBFS2019}. In case the proof seems a bit opaque to a reader, we will first give an informal sketch of the underlying idea before providing a formal proof. 

Our result \cref{thm:contracting-iff-rank-one} implies that rank one isometries in $\Aut(\Om)$ are contracting elements for $(\Om,\PS^{\Om})$. Thus a rank one Hilbert geometry $(\Om,\G)$ contains contracting elements by definition. Now it is possible that $\Gamma$ is virtually cyclic in which case $\Gamma$, up to passing to a finite index subgroup, is generated by a single rank one isometry. But if $\G$ is not virtually cyclic, then there will be infinitely many rank one isometries $\g_1,\g_2,\dots$ which are `independent loxodromics' (i.e., there exists an abstract Gromov hyperbolic space $X$ on which each $\g_i$ acts `loxodromically' with exactly two distinct fixed points $\g_i^{\pm}$ and the sets $\{ \g_i^{\pm}\}$ and $\{\g_j^{\pm}\}$ are pairwise disjoint whenever $i \neq j$). This last conclusion follows from results in \cite{DGO2017} and  \cite{sisto_contracting_rw} that we referred to in \cref{rem:acy-hyp-2}. These infinitely many independent rank one isometries $\g_i$ generate non-Abelian free subgroups of $\G$ and the $\g_i$ lie in distinct hyperbolically embedded subgroups $E(\g_i)$, see \cref{prop:Eg}. 

Now let us give the formal proof. 

\begin{proof}[Proof of \cref{thm:acy-hyp}]
Since $(\Om, \G)$ is a rank one Hilbert geometry, $\G$ contains a rank one isometry. Then Theorem \ref{thm:contracting-iff-rank-one} implies that $\G$ contains a contracting element for $(\Om, \PS^\Om)$. The result follows from Theorem \ref{thm:contracting-equiv-acy-hyp}. 
\end{proof}

\begin{remark}
 By \cref{thm:acy-hyp}, a rank one Hilbert geometry $(\Om,\G)$ where $\G$ is not virtually cyclic gives an example of an acylindrically hyperbolic group $\G$. A natural question is: what is an example of a Gromov hyperbolic metric space $X$ on which $\G$ acts acylindrically and non-elementarily? Is there a way to understand this space $X$ in terms of the Hilbert geometry $\Om$?
 
 It seems that one might be able to apply the projection complex construction in \cite{BBFS2019} (see also \cite{BBF15, sisto_contracting_rw}) to construct such a space $X$  from the Hilbert geometry $\Om$. Roughly speaking, this will be a metric space obtained by collecting all rank one axes in $\Om$ and adding edges between them depending on diameters of images of some projection maps. We do not pursue this  direction in this paper and this remark is mostly speculative in nature.
 \end{remark}

\part{Applications}
\label{part:applications}

\section{Second Bounded Cohomology and Quasi-morphisms}
\label{sec:second_bdd_cohom}

\subsection{Definitions} We first introduce some definitions following \cite[Section 1]{BBF13}. Suppose $G$ is a group, $(E,||\cdot||)$ is a complete normed $\Rb$-vector space and  $\rho: G \to \Uc (E)$ is a unitary representation. Let $C(G,E)$ be the space of all functions from $G$ to $E$. 

A function $F \in C(G,E)$ is called a quasi-cocycle if   
\begin{equation*}
\Delta(F) := \sup_{g,g' \in G}||F(g g')-F(g) - \rho(g)F(g')|| < \infty.
\end{equation*}
Let $V$ be the vector sub-space of $C(G,E)$ that consists of all quasi-cocyles. Let $V_0$ be the sub-space of $V$ generated by bounded functions and the set  $\{F:G \to E | F(gg')=F(g)+\rho(g)F(g') \forall g,g ' \in G\}$. Define  $$\T{QC}(G;\rho):=V/V_0.$$

If $\rho$ is the trivial representation $\rho_{\rm triv}: G \to \Rb$, then $V$ is the space of quasi-morphisms of $G$ while $V_0$ is the space generated by bounded functions and group homomorphisms from $G$ to $\Rb$. In this case, $\T{QC}(G;\rho_{\rm triv})$ recovers a classical object called the space of `non-trivial' quasi-morphisms of $G$, usually denoted by $\T{QH}(G)$ (see the definitions preceding Theorem \ref{thm:rk-1-hil-infinite-QH}). 

 Group cohomology of $G$ (twisted by the representation $\rho$) affords an interesting interpretation of $\T{QC}(G;\rho)$. If $F$ is a quasi-coycle, then $dF(g,g'):=F(gg')-F(g)-\rho(g)F(g')$ defines a class in the second bounded cohomology group $H_b^2(G;\rho)$. This class $dF$ is trivial in the ordinary cohomology group $H^2(G;\rho)$.  On the other hand, the class $dF$ is non-trivial in $H_b^2(G;\rho)$ whenever $F$ is non-trivial in $V/V_0$. Thus $\wt{QC}(G;\rho)$ is  the kernel of the comparison map $H_b^2(G, \rho) \rightarrow H^2(G ; \rho)$. For a more detailed discussion, we refer the reader to \cite[Section 1]{BBF13} or  \cite{bounded_cohom_book}.

\subsection{Results} Infinite dimensionality of $\T{QH}(G)$ (and $\T{QC}(G;\rho$) is often related to geometric phenomena. For example, \cite{bestvina_fujiwara_symm_space} shows that a compact irreducible non-positively curved Riemannian manifold $M$ is (Riemannian) rank one if and only if $\dim \big( \T{QH}(\pi_1(M))\big) =\infty$. Now, in the same spirit as in Riemannian non-positive curvature, we prove a cohomological characterization of rank one Hilbert geometries. We will only consider unitary representations on uniformly convex Banach spaces\footnote{A Banach space $E$ is uniformly convex if for any $\e'>0$, there exists $\delta'>0$ such that if $u,v\in E$, $||u|| \leq 1$, $||v|| \leq 1$, $||u-v|| \geq \e'$, then $||\frac{u+v}{2}|| \leq 1-\delta'$.} (e.g. $\Rb$ or $\ell^p(G)$ where $G$ is a discrete group and $1< p < \infty$).

\begin{theorem}\label{thm:rk-1-hil-infinite-QC-rho}
If $(\Omega,\G)$ is a rank one Hilbert geometry, $\G$ is torsion-free and $\rho$ is any unitary representation of $~\G$ on a uniformly convex Banach space $E \neq 0$, then either $\G$ is virtually cyclic or $\dim \big( \T{QC}(\G;\rho) \big)=\infty$.
\end{theorem}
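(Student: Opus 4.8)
The plan is to reduce the statement to the acylindrical hyperbolicity of $\G$ established in Theorem \ref{thm:acy-hyp}, and then to invoke the infinite-dimensionality results of Bestvina-Bromberg-Fujiwara \cite{BBF13} with uniformly convex coefficients. Since the conclusion is a disjunction, I would first simply dispose of the case where $\G$ is virtually cyclic, where there is nothing to prove. So assume for the rest of the argument that $\G$ is \emph{not} virtually cyclic.

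Because $(\Om,\G)$ is rank-one, $\G$ contains a rank-one isometry $g$, which by Theorem \ref{thm:contracting-iff-rank-one} is a contracting element for $(\Om,\PS^\Om)$. The elementary subgroup $E(g)$ (defined in the proof of Theorem \ref{thm:contracting-equiv-acy-hyp}) is then, by Proposition \ref{prop:Eg}, an infinite virtually cyclic hyperbolically embedded subgroup of $\G$ containing $\langle g\rangle$. Here the torsion-freeness of $\G$ is used to upgrade ``virtually $\Zb$'' to $E(g)\cong\Zb$, and the non-virtual-cyclicity of $\G$ guarantees that $E(g)$ has infinite index; together these say that $E(g)$ is a \emph{proper}, infinite, non-degenerate hyperbolically embedded subgroup. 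By Proposition \ref{defn-acy-hyp-2} (or directly by Theorem \ref{thm:acy-hyp}), $\G$ is therefore acylindrically hyperbolic.

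Next I would feed this hyperbolically embedded structure, together with the isometric representation $\rho$ on the uniformly convex Banach space $E\neq 0$, into the machinery of \cite{BBF13}. Their construction produces, from a non-degenerate hyperbolically embedded subgroup and such a representation, an infinite-dimensional family of $\rho$-quasi-cocycles no two of which differ by a cocycle up to bounded distance; equivalently, the kernel of the comparison map $H^2_b(\G;\rho)\to H^2(\G;\rho)$, taken modulo the trivially vanishing classes, is infinite dimensional. By the description of $\T{QC}(\G;\rho)$ given in Section \ref{sec:second_bdd_cohom}, this is exactly the assertion $\dim\big(\T{QC}(\G;\rho)\big)=\infty$, completing the proof.

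The two reductions are routine once the earlier theorems of the paper are in hand; the genuine mathematical content is entirely the black-box construction in \cite{BBF13}. The only points demanding care, and where I expect the bookkeeping (rather than any real obstacle) to lie, are verifying that the precise coefficient hypotheses of the cited theorem are met — namely uniform convexity and $E\neq 0$ — and matching our definition of $\T{QC}(\G;\rho)$, as quasi-cocycles modulo those a bounded distance from a genuine cocycle, with the reduced second bounded cohomology formulation in which \cite{BBF13} phrase their result.
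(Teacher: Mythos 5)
Your overall route coincides with the paper's: dispose of the virtually cyclic case, obtain acylindrical hyperbolicity (via the contracting element and Proposition \ref{prop:Eg}, i.e., the proof of Theorem \ref{thm:acy-hyp}), and then invoke \cite{BBF13}. However, there is a genuine gap in how you invoke \cite{BBF13}, and it sits exactly where the torsion-freeness hypothesis is supposed to do its work. The result being cited, \cite[Corollary 1.2]{BBF13} (quoted in the paper as Proposition \ref{thm:BBF13}), is \emph{not} a statement about an arbitrary acylindrically hyperbolic group together with a unitary representation on a uniformly convex Banach space: it carries the additional hypothesis that the maximal finite normal subgroup of the group has a non-zero fixed vector in $E$. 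This is not a cosmetic condition --- the quasi-cocycles produced by the Bestvina--Bromberg--Fujiwara construction can degenerate when the finite ``kernel'' acts on $E$ without non-zero fixed vectors --- and your proposal never verifies it. Indeed, you explicitly assert that the only coefficient hypotheses needing verification are ``uniform convexity and $E\neq 0$,'' which is false for the theorem as stated.

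The repair is precisely the paper's one-line use of torsion-freeness: since $\G$ is torsion-free, its maximal finite normal subgroup is trivial, and the trivial subgroup fixes all of $E\neq 0$, so the hypothesis of \cite[Corollary 1.2]{BBF13} holds automatically. By contrast, the place where you actually spend torsion-freeness --- upgrading $E(g)$ from virtually $\Zb$ to $E(g)\cong\Zb$ --- does no work in the argument: non-degeneracy of the hyperbolically embedded subgroup (properness and infiniteness of $E(g)$) already follows from Proposition \ref{prop:Eg} together with the assumption that $\G$ is not virtually cyclic, with no torsion hypothesis needed. So your intermediate steps are fine, but as written the application of the black box is incomplete; adding the observation about the (trivial) maximal finite normal subgroup closes the gap and recovers the paper's proof.
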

The proof follows directly from the following general result about acylindrically hyperbolic groups.
\begin{theorem}{\cite[Corollary 1.2]{BBF13}}
\label{thm:BBF13}
If $G$ is an acylindrically hyperbolic group, $E \neq 0$ is a  uniformly convex Banach space, $\rho: G \to \Uc(E)$ is a unitary representation  and any maximal finite normal subgroup of $G$ has a non-zero fixed vector, then $\dim \big( \T{QC}(G; \rho) \big) = \infty.$  \qedhere
\end{theorem}
\begin{proof}[Proof of Theorem \ref{thm:rk-1-hil-infinite-QC-rho}] If $\G$ is not virtually cyclic, then Theorem \ref{thm:acy-hyp} implies that $\G$ is an acylindrically hyperbolic group. Since $\G$ is torsion-free, there are no finite normal subgroups. The claim then follows from Theorem \ref{thm:BBF13}.
\end{proof}

\noindent  We will now apply \cref{thm:rk-1-hil-infinite-QC-rho} to two specific choices of $\rho$ and $E$ to get Theorem \ref{thm:rk-1-hil-infinite-QH}. For the first one, $\rho=\rho_{\rm triv}$ and $E=\Rb$ in which case $\wt{QC}(\Gamma;\rho)=\wt{QH}(\G)$, the space of non-trivial quasi-morphisms. For the second one, $E=\ell^p(\Gamma)$ with $1<p <\infty$ and $\rho=\rho^p_{\rm reg}$ is the regular representation (i.e. $\rho^p_{\rm reg}(\g)  f(x)=f(\g^{-1}x)$ for any $f \in \ell^p(\G)$ and $x \in \G$).

\textbf{Theorem \ref{thm:rk-1-hil-infinite-QH}.} \emph{If $(\Omega,\G)$ is a rank one Hilbert geometry, $\G$ is torsion-free and $\G$ is not virtually cyclic, then $\dim \big( \T{QH} (\G) \big) = \infty$ and $\dim \big( \T{QC}(\G;\rho^p_{\rm reg}) \big) =\infty$ if $1< p < \infty$. }
\vspace{-10pt}
\begin{proof}
Immediate from \cref{thm:rk-1-hil-infinite-QC-rho} and the fact that $\Rb$ and $\ell^p(\G)$ with $1 < p <\infty$ are uniformly convex Banach spaces, see \cite[Section 3]{BBF13}.
\end{proof}

\textbf{Corollary \ref{cor:qh-rigid}.} \emph{If $(\Omega,\G)$ is a divisible Hilbert geometry and $\Om$ is irreducible, then $\dim (\T{QH} (\G)) = \infty$ if and only if $(\Om,\G)$ is a rank one Hilbert geometry. Otherwise $\dim (\T{QH}(\G))=0$.}
\vspace{-10pt}
\begin{proof}
If $(\Om,\Gamma)$ is a rank one Hilbert  geometry, then Theorem \ref{thm:rk-1-hil-infinite-QH} implies that $\dim (\T{QH} (\G)) = \infty$. If $(\Om,\G)$ is not rank one, then \cref{thm:rank_rigidity_detailed} implies that $\Aut(\Om)$ is locally isomorphic to a simple Lie group of real rank at least two (i.e. $\Om$ is an irreducible symmetric domain of rank at least two). Thus $\G$ is isomorphic to a uniform lattice in a higher rank simple Lie group which implies that $\dim (\T{QH}(\G))=0$ \cite[Theorem 21]{BM2002}.
\end{proof}

\section{Counting of conjugacy classes}
\label{sec:counting-conjugacy}
Suppose $(\Omega,\G)$ is a rank one Hilbert geometry. Recall the notions of translation length and stable translation length of a conjugacy class in $\G$ (cf. \ref{subsec:intro-counting}). We now introduce the notion of pointed length for a conjugacy class $[c_g]$ of  $g \in \G$ \cite{gekhtman_wenyuan_counting}. Fix a base point $p\in \Omega$. The pointed length of $[c_g]$ is 
\begin{equation*}
 \Lc_p([c_g]):=\inf_{g'\in [c_g]} \hil(p,g'p).
\end{equation*}

We first show that $$\tau_\Omega([c_g])=\tau_{\Omega}^{\st}([c_g]).$$ Indeed, triangle inequality implies $\tau_{\Omega}^{\st}(g) \leq \tau_{\Omega}(g)$. On the other hand, by Proposition  \ref{prop:translation_length},
\begin{align*}
\tau_{\Omega}^{\st}(g) \geq \lim_{n \to \infty} \dfrac{\tau_{\Omega}(g^n)}{n}=\dfrac{1}{n}\log \dfrac{\ev_{\max}(\wt g^n)}{\ev_{\min}(\wt g^n)}= \log \dfrac{\ev_{\max}(\wt g)}{\ev_{\min}(\wt g)} = \tau_{\Omega}(g).
\end{align*}
Next, we show that if  $\Omega/\G$ is compact and $R:= \diam (\Omega/\G)$, then \begin{equation*}
\tau_{\Omega}([c_g]) \leq \Lc_p([c_g]) \leq \tau_{\Omega}([c_g])+2R.
\end{equation*} 
Clearly $\tau_{\Omega}([c_g])\leq \Lc_p([c_g])$. On the other hand, if $x \in \Omega$ then there exists $h_x \in \G$ such that $\hil(x,h_xp) \leq R$. Then
\begin{align*}
\Lc_p([c_g]) \leq \hil(p,h_x^{-1}gh_xp)\leq 2\hil(h_xp,x) + \hil(x,gx) \leq 2R +\hil(x,gx).
\end{align*}
Thus $\Lc_p([c_g]) \leq \tau_{\Omega}([c_g])+2R$.

Now let us consider the following counting functions for conjugacy classes in $\G$:
\begin{align*}
\Cc(t)&:=\#\{ [c_g] : g \in \G, \tau_{\Omega}([c_g])\leq t\},\\
\Cc^{\st}(t)&:=\#\{ [c_g] : g \in \G, \tau_{\Omega}^{\st}([c_g])\leq t\} \text{ and } \\
\Cc^{\Lc_p}(t)&:=\#\{ [c_g] : g \in \G, \Lc_p([c_g]) \leq t \}
\end{align*}
Based on the above discussion, 
\begin{equation}
\label{eqn:cc-ccst-equal}
\Cc(t)=\Cc^{\st}(t).
\end{equation} If $\Omega/\G$ is compact and $R=\diam(\Omega/\G)$, then 
\begin{equation}
\label{eqn:count-lc}
\Cc^{\Lc_p}(t) \leq \Cc(t) \leq \Cc^{\Lc_p}(t+2R). 
\end{equation}
We now prove asymptotic growth formula for these functions. It is a direct consequence of the Main Theorem in \cite{gekhtman_wenyuan_counting}. Recall that the critical exponent of $\G$ (cf. \ref{subsec:intro-counting}) is defined by 
\begin{equation*}
\omega_{\G}:=\limsup_{n \to \infty}\dfrac{\log \# \{ g \in \G: \hil(x,gx) \leq n\}}{n}.
\end{equation*}

\noindent {\bf Theorem \ref{thm:counting-conjugacy}.}
\emph{Suppose $(\Om, \G)$ is a divisible rank one Hilbert geometry and $\G$ is not virtually cyclic.  Then there exists a constant $D'$ such that for all $t \geq 1$,}
\begin{equation}\label{eqn:count-closed-geod}
\dfrac{1}{D'} \dfrac{\exp (t\omega_{\G})}{t} ~\leq~ \Cc(t) ~\leq~ D' \dfrac{\exp (t\omega_{\G})}{t}.
\end{equation}
\emph{The functions $\Cc^{\st}(t)$, $\Cc^{\Lc_p}(t)$ and $\Cc_{\prim}(t)$ (cf. \ref{rem:counting}) also satisfy similar growth formula.} 
\begin{proof}
The Main Theorem part (1) in \cite{gekhtman_wenyuan_counting} implies that if $\G$ is a non-elementary group with a co-compact action (more generally, statistically convex co-compact action) on a geodesic metric space and $\G$ contains a contracting element (in the sense of BF; cf. \ref{appendix:contracting-BF}), then $\Cc^{\Lc_p}(t)$ satisfies the growth formula in \eqref{eqn:count-closed-geod}. If $(\Omega, \Gamma)$ is as above, then it satisfies all of these conditions (cf. \ref{thm:contracting-iff-rank-one} and \ref{rem:contraction-equiv}). Then $\Cc^{\Lc_p}(t)$ satisfies equation \eqref{eqn:count-closed-geod}. By equations \eqref{eqn:cc-ccst-equal} and \eqref{eqn:count-lc}, $\Cc(t)$ and $\Cc^{\st}(t)$ also satisfies equation \eqref{eqn:count-closed-geod}. 

For proving Remark \ref{rem:counting} part (2), set $$\Cc^{\Lc_p}_{\prim}(t):=\#\{ [c_g]: g \in \G \text{ is primitive}, \Lc_p([c_g])\leq t\}.$$ Main Theorem part (1) in \cite{gekhtman_wenyuan_counting} implies that the $\Cc^{\Lc_p}_{\prim}(t)$  satisfies a  similar growth formula as \eqref{eqn:count-closed-geod}. Since $\tau_{\Om}([c_g]) \leq \Lc_p([c_g]) \leq \tau_{\Omega}([c_g])+2R$, this implies the result for $\Cc_{\prim}(t)$.
\end{proof}

\section{Proofs of Proposition \ref{prop:generic-rw}, Proposition \ref{prop:appl-acy-hyp} and Proposition \ref{prop:rank-one-axis-morse}}
\label{sec:other-proofs}

For the proofs in this section, recall the following implication of Theorem \ref{thm:acy-hyp}: if $(\Omega,\G)$ is a rank one Hilbert geometry and $\G$ is not virtually cyclic, then $\G$ is an acylindrically hyperbolic group. \\

\noindent {\bf Proposition \ref{prop:generic-rw}.}
\emph{If $(\Omega,\G)$ is a rank one Hilbert geometry, $\G$ is not virtually cyclic and $\G$ is finitely generated, then the rank one isometries in $\G$ are exponentially generic:  if $(X_n)_{n \in \Nb}$ is a simple random walk on $\G$, then there exists a constant $C \geq 1$ such that for all $n \geq 1$}
\begin{equation*}\Pb \big[~X_n \text{ is not a rank one isometry} ~\big] \leq C e^{-n/C}.
\end{equation*}
\begin{proof}
Under the hypotheses, $\G$ is an acylindrically hyperbolic group. The result then follows from \cite[Theorem 1.6]{sisto_contracting_rw}.
\end{proof}

\noindent {\bf Proposition \ref{prop:appl-acy-hyp}.} 
\emph{If $(\Om,\G)$ is a rank one Hilbert geometry and $\G$ is not virtually cyclic, then: 
\begin{enumerate}
\item $\G$ is SQ-universal, i.e. every countable group embeds in a quotient of $\G$. 
\item if $\G$ is the Baumslag-Solitar group $\BS(m,n)$, then $m=n=0$ and $\G$ is the free group on two generators.
\end{enumerate}}
\begin{proof} Under the hypotheses, $\G$ is an  acylindrically hyperbolic group. Then SQ-universality follows from \cite[Theorem 8.1]{osin_acy_hyp}. The second part follows from \cite[Example 7.4]{osin_acy_hyp}, where Osin proves that $\BS(m,n)$ is acylindrically hyperbolic if and only if $m=n=0$. But $\BS(0,0)=F_2.$ 
\end{proof}

\noindent {\bf Propostion \ref{prop:rank-one-axis-morse}.}
 \emph{If $\Om$ is a Hilbert geometry and $\g \in \Aut(\Om)$ is a rank one isometry, then the axis $\ell_\g$ of $\g$ is $\Kc$-Morse for some Morse gauge $\Kc: [1,\infty)\times[0,\infty)\to [0,\infty)$, i.e. if $\alpha$ is a $(\lambda,\varepsilon)$-quasi-geodesic  with endpoints on $\ell_\g$, then $\alpha \subset \Nc_{\Kc(\lambda,\varepsilon)}(\ell_\g)$.}
\begin{proof}
Since $\g$ is a rank one isometry, Theorem \ref{thm:contracting-iff-rank-one} implies that $\g$ is a contracting element for $(\Om,\PS^{\Om})$. Then the axis of $\ell_\g$ of $\g$ is $\PS^\Om$-contracting. Thus \cite[Lemma 2.8]{sisto_contracting_rw} (Proposition \ref{sisto-morse} in this paper) implies that $\ell_\g$ is a Morse geodesic. 
\end{proof}

\appendix

\section{Rank one Hilbert geometries: Generalization, Examples and Non-examples}
\label{sec:eg-rank-one}

This section is devoted to the discussion of examples and non-examples of rank one Hilbert geometries (cf. \ref{defn:rank-one-hil-geom}) and generalizing the notion of rank one to convex co-compact actions.  

\subsection{Strictly convex examples.} If $\Omega$ is a strictly convex Hilbert geometry, then $\bdry$  does not contain any line segments. Thus, if $g \in \Aut(\Om)$ with $\tau_{\Om}(g)>0$, then $g$ is a rank one isometry (as it has an axis and there are no half triangles in $\Om$).  Then, all the  strictly convex divisible examples in Section \ref{sec:eg-div-hil-geom} are rank one.

\subsection{Non-strictly convex examples.}
\label{subsec:non-strict-eg-rk-1}

Suppose $(\Omega,\Gamma)$ is a divisible Hilbert geometry where $\Gamma$ infinite and not virtually abelian. Assume that $\Gamma$ is a relatively hyperbolic group with respect to a finite collection of free abelian subgroups of rank at least two.  Then we claim that: \emph{$(\Omega, \Gamma)$ is a divisible rank one Hilbert geometry}. The proof of this follows from Remark \ref{rem:rk-one-cc}(B) and Proposition \ref{prop:iso-sim-implies-rk-1} (see below). This claim implies that the divisible non-strictly convex examples discussed in Section \ref{sec:eg-div-hil-geom}, that are neither simplices nor symmetric domains of rank at least two, are all examples of rank one Hilbert geometries.

\subsection{Non-examples}
\label{subsec:non-eg-rk-1}
The $d$-simplices $T_d$ for $d \geq 2$ are clearly non-examples of rank one Hilbert geometries. If $\Om$ is an irreducible symmetric domain of rank at least two and $\Gamma \leq \Aut(\Om)$ acts co-compactly on $\Om$, then $(\Omega, \G)$ \emph{cannot be a rank one Hilbert geometry}, see \cref{thm:rank_rigidity_detailed}.

\subsection{Generalization of rank one to convex co-compact actions}
\label{subsec:app-conv-cocpt} 
The notion of convex co-compact actions on Hilbert geometries \cite{DGK2017} generalize divisible Hilbert geometries. Suppose $\Omega$ is a Hilbert geometry and $\G \leq \Aut(\Om)$ is a discrete subgroup. The \emph{full orbital limit set} is defined as $\Lc^{\orb}_{\Omega}(\G):= \bigcup_{x \in \Omega}\left( \overline{ \G \cdot x} \cap \bdry \right)$ and let $\convcore:=\CH_{\Omega}(\Lc^{\orb}_{\Omega}(\G)).$ 

\begin{definition}
\label{defn:cc}An infinite discrete group $\G\leq \Aut(\Om)$ is \emph{convex co-compact} if $\convcore \neq \emptyset$ and $\convcore/\G$ is compact. 
\end{definition}

The \emph{ideal boundary} of $\convcore$ is given by $\partiali \convcore:=\bdry~ \cap \overline{\convcore}$. For convex co-compact groups, $\partiali \convcore$ is the only part of $\bdry$ `visible' to the group acting on $\Om$. Thus it is natural to modify the notion of rank one isometries by considering half triangles in $\convcore$ instead of $\Om$. We say that the projective geodesic $(a,b) \subset \convcore$ is \emph{not contained in any half triangle in $\convcore$} if either $(a,z) \subset \convcore$ or $(z,b)\subset \convcore$ for any $z \in \partiali \convcore$.

\begin{definition}
Suppose $\Om \subset \RP$ is a Hilbert geometry and $\G \leq \Aut(\Om)$ is a convex co-compact group. 
\begin{enumerate}
\item An element $\g \in \G$ is a \emph{convex co-compact rank one isometry} if:
\begin{enumerate}
\item $\log \left| \dfrac{\ev_1}{\ev_{d+1}}(\g) \right|>0$ and $~\g$ has an axis (cf. \ref{defn:axis}),
\item none of the axes $\ell_{\g}$ of $\g$ are contained in a half triangle in $\convcore$.
\end{enumerate}  
\item We say that $\Gamma$ is a \emph{rank one convex co-compact group} if $\G$ contains  a   convex co-compact rank one isometry. 
\end{enumerate}
\end{definition}

 \begin{remark}\label{rem:rk-one-cc}\
 \begin{enumerate}[(A)]
 \item The notion of a \emph{convex co-compact rank one isometry} differs from the notion of a \emph{rank one isometry} (cf. \ref{defn:rank-one-isometry}) only in condition (1b): for convex co-compact actions, we consider half triangles in $\convcore$ instead of $\Om$. 
 \item  If $\G$ acts co-compactly on $\Om$, then $(\Omega, \G)$ is a \emph{divisible rank one Hilbert geometry} if and only if $\Gamma$ is a \emph{rank one convex  co-compact group}. This is because divisibility implies $\convcore=\Omega$.
 \end{enumerate}
 \end{remark}
 
If $\G \leq \Aut(\Om)$ is a convex co-compact group and $\g \in \G$ is a convex co-compact rank one isometry, then the analogues of \cref{prop:rank-one-properties}, \ref{cor:ns_dynamics} and \ref{prop:biprox-equiv-no-half-T} hold. But now we need to replace $\Om$ with $\convcore$ and $\bdry$ with $\partiali \convcore$. In particular, we have: $\g \in  \G$ is a convex co-compact rank one isometry if and only  if $\g$ is biproximal and has an axis.

We will briefly sketch the proof ideas of these analogues, see \cite{MI2021} for details. Observe that if $\left| \frac{\ev_1}{\ev_{d+1}}(\g) \right|>0$, then $E_{\g}^{\pm}\cap \bdry=E_{\g}^{\pm} \cap \partiali \convcore$. Recall that if $x \in \partiali \convcore$, then $$F_{\convcore}(x):=\{x\} \cup \{ y \in \overline{\convcore} : \exists \text{ open projective line segment in } \overline{\convcore} \text{ containing } x \text{ and } y \}.$$
Convex co-compact groups have a special property: if $x \in \partiali \convcore$, $F_{\convcore}(x)=F_{\Om}(x)$ \cite[Corollary 4.13]{DGK2017}. Using these properties, one can now see that the proofs in Section \ref{sec:rank-one-isom} go through verbatim after replacing $\Om$ by $\convcore$ and $\bdry$ by $\partiali \convcore$. Thus the analogues of Proposition  \ref{prop:rank-one-properties}, \ref{cor:ns_dynamics} and \ref{prop:biprox-equiv-no-half-T} hold; also see \cite{MI2021}.

\subsection{Convex co-compact examples - Hyperbolic groups.}

Suppose $\G \leq \Aut(\Om)$ is a convex co-compact group  that is word hyperbolic. We claim that: \emph{$\G$ is a rank one convex co-compact group}. Indeed, \cite[Theorem 1.15]{DGK2017} implies that word hyperbolicity of $\G$ is equivalent to the property that  $\partiali \convcore$ does not contain any non-trivial projective line segments. Then there are no half triangles in $\convcore$. Moreover, any infinite-order element $\g$ has an axis \cite[Corollary 7.4]{DGK2017}.  Thus every such $\g$ is a convex co-compact rank one isometry and the claim follows.

\subsection{Convex co-compact examples - Relatively hyperbolic groups.} 
\label{sec:cc-rel-hyp}
\begin{proposition}\label{prop:iso-sim-implies-rk-1}
Suppose $\G\leq \Aut(\Om)$ is a convex co-compact group that is relatively hyperbolic with respect to $\{ A_1, A_2, \ldots, A_m\}$ where each $A_i$ is a virtually free abelian group of rank at least two. Then $\Gamma$ is either a rank one convex co-compact group or a virtually abelian group.
\end{proposition}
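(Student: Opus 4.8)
The strategy is to produce a single element $\g \in \G$ that is biproximal and has an axis: by Remark \ref{rem:rk-one-cc}(C), since $\G$ acts convex co-compactly, any such $\g$ is automatically a convex co-compact rank-one isometry, so that $\G$ is rank one convex co-compact. Thus the entire proof reduces to finding one biproximal element with an axis, and I would argue the contrapositive, assuming throughout that $\G$ is \emph{not} virtually free abelian of rank $\geq 2$. The argument splits into a group-theoretic step producing a candidate $\g$ and a dynamical step identifying its projective behaviour.

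For the group-theoretic step, I would use the structure theory of relatively hyperbolic groups. Each peripheral $A_i$ has rank $\geq 2$, so it is not virtually cyclic; hence if $\G$ were elementary with respect to the peripheral structure (finite, virtually cyclic, or conjugate into some $A_i$), then, being infinite and containing the non-virtually-cyclic group $A_i$, it would have to be commensurable with a single $A_i$ and therefore virtually free abelian of rank $\geq 2$ --- the excluded case. So $\G$ is a non-elementary relatively hyperbolic group; in particular it is acylindrically hyperbolic \cite{osin_acy_hyp}, and it contains a \emph{hyperbolic} element $\g$, i.e.\ an infinite order element not conjugate into any $A_i$, acting loxodromically with north-south dynamics on the Bowditch boundary $\partial(\G,\{A_i\})$. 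For the dynamical step I would invoke the analysis of convex co-compact actions of relatively hyperbolic groups in \cite{IZ2019, DGK2017}. Milnor--\v{S}varc applied to the proper co-compact action on the geodesic space $\convcore$ shows the orbit map $\G \to \convcore$ is a quasi-isometry, so the Morse element $\g$ has positive stable translation length in $\convcore$ and hence $\tau_{\convcore}(\g)>0$. Under the equivariant identification of $\partial(\G,\{A_i\})$ with the ideal boundary $\partiali \convcore \subset \bdry$ from \cite{IZ2019}, the two (conical) fixed points of $\g$ correspond to points $a,b \in \partiali \convcore$; conicality forces $a$ and $b$ to be, respectively, the unique attracting and repelling fixed points of $\g$, realized by one-dimensional eigenspaces for the extremal eigenvalue moduli, so $\g$ is biproximal, and the segment $(a,b)$ lies in $\convcore$, giving the axis of $\g$.

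The main obstacle is exactly this last translation between the group-theoretic notion of a hyperbolic element and the projective notions of biproximality and axis. One must rule out that the top and bottom eigenvalue moduli of $\g$ are achieved with multiplicity or fail the modulus gap; this is where the peripheral-versus-hyperbolic dichotomy is essential --- an element conjugate into some $A_i$ would stabilize a properly embedded simplex in $\convcore$ and fix the corresponding bounded parabolic point of $\partiali \convcore$, behaviour incompatible with north-south dynamics at a pair of conical points --- and it is precisely here that the boundary correspondence and the simplex structure of \cite{IZ2019} (together with \cite[Theorem 1.9]{IZ_flat_torus}) do the work. Once $\g$ is shown to be biproximal with an axis, Remark \ref{rem:rk-one-cc}(C) immediately identifies $\g$ as a convex co-compact rank-one isometry, completing the proof that $\G$ is rank one convex co-compact.
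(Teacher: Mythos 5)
Your high-level skeleton agrees with the paper's: exclude the virtually abelian case via the group-theoretic dichotomy for relatively hyperbolic groups, extract an infinite-order element $\g$ not conjugate into any peripheral $A_i$, and upgrade $\g$ to a convex co-compact rank-one isometry using the structure of $\convcore$ from \cite{IZ2019}; and your proposed reduction to ``biproximal with an axis'' via Remark \ref{rem:rk-one-cc}(C) is a legitimate target. But there is a genuine gap at exactly the step you yourself flag as ``the main obstacle'': you never prove that $\g$ is biproximal or that it has an axis --- you assert that conicality of the fixed points ``forces'' one-dimensional extremal eigenspaces and that the simplex structure of \cite{IZ2019} ``does the work,'' which is a restatement of the problem, not a solution. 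Worse, the one concrete mechanism you invoke is false: there is no equivariant identification of the Bowditch boundary $\partial(\G,\{A_i\})$ with the ideal boundary $\partiali \convcore$. When $m \geq 1$, the ideal boundary contains the boundaries of maximal properly embedded simplices, hence non-trivial segments, and the Bowditch boundary is obtained from $\partiali \convcore$ by collapsing each such simplex boundary to a bounded parabolic point; the natural map is a quotient, not a homeomorphism. Consequently ``north-south dynamics at two conical points'' does not directly translate into statements about eigenvalue moduli and multiplicities of $\g$ as a projective transformation, and your observation that \emph{peripheral} elements behave differently does not supply the missing implication for the hyperbolic element $\g$ itself.

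For comparison, the paper's proof never establishes biproximality directly (in this framework it is a \emph{consequence} of being rank-one, via Remark \ref{rem:rk-one-cc}(C) and Proposition \ref{prop:rank-one-properties}, not an input). Instead it verifies the definition of a convex co-compact rank-one isometry: (i) $\tau_{\convcore}(\g)>0$ by convex co-compactness; (ii) an axis exists, because Brouwer's fixed point theorem applied to the compact convex $\g$-invariant sets $\overline{E_\g^{+}\cap\convcore}$ and $\overline{E_\g^{-}\cap\convcore}$ produces fixed points $\g^{\pm}$, and if $[\g^+,\g^-]\subset\partiali \convcore$ then Proposition \ref{prop:facts-iso-simplices} places this segment inside $\partial S$ for some maximal properly embedded simplex $S$, whence $\partial(\g S)\cap\partial S\neq\emptyset$ forces $\g S=S$, i.e.\ $\g\in\Stab_{\G}(S)$, contradicting that $\g$ is non-peripheral (the peripherals correspond to simplex stabilizers up to finite index); (iii) no axis lies in a half triangle in $\partiali\convcore$, by the same disjointness argument applied to the two sides of the putative half triangle. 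If you want to salvage your route, you still need an argument of exactly this kind: Proposition \ref{prop:facts-iso-simplices} (segments in $\partiali\convcore$ lie in simplex boundaries; distinct maximal simplices have disjoint boundaries) is what converts the group-theoretic fact ``$\g$ is not peripheral'' into the geometric conclusions about its fixed points, and that conversion is precisely what your proposal leaves unproven.
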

 This proposition shows that the divisible examples of Section \ref{subsec:non-strict-eg-rk-1} and  their convex co-compact deformations produce relatively hyperbolic examples that are rank one convex co-compact. We will spend the rest of this subsection proving this proposition. We will rely on results from \cite{IZ2019}. 
 
 \medskip

\noindent \emph{Proof of Proposition \ref{prop:iso-sim-implies-rk-1}.}
Let $\smax$ be the collection of all maximal properly embedded simplices in $\convcore$ of dimension $\geq 2$. Since $\G$ is relatively hyperbolic with respect to virtually abelian subgroups of rank at least two, \cite[Theorem 1.7]{IZ2019} implies that $(\convcore, \hil)$ is a \emph{Hilbert geometry with isolated simplices}, i.e. $\smax$ is closed and discrete in the local Hausdorff topology induced by $\hil$. In this case, \cite[Theorem 1.18]{IZ2019} implies that for each $i \in \{1, \ldots, m\}$, we can assume $A_i=\Stab_{\G} (S_i)$ where $S_i$ is a maximal properly embedded simplex in $\convcore$ of dimension $\geq 2$ and $\smax=\sqcup_{i=1}^m \G \cdot S_i$.  We will require the following result regarding simplices in $\smax$.
\begin{proposition}[{\cite[Theorem 1.8]{IZ2019}}]
\label{prop:facts-iso-simplices}
Suppose $\G$ and $\smax$ are as above. Then:
 \begin{enumerate}
 \item if $[x,y] \subset \partiali \convcore$ with $x \neq y$, then there exists $S \in \smax$ such that $[x,y] \subset \partial S$. 
 \item if $S_1 \neq S_2 \in \smax$, then $\#(S_1 \cap S_2) \leq 1$ and $\partial S_1 \cap \partial S_2=\emptyset$.
 \end{enumerate}
\end{proposition}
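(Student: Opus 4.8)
The plan is to exhibit a single element $\g \in \G$ that is a convex co-compact rank-one isometry, which by definition (Section~\ref{subsec:app-conv-cocpt}) means $\tau_{\convcore}(\g)>0$, $\g$ has an axis, and no axis of $\g$ is contained in a half triangle in $\partiali\convcore$; producing such a $\g$ shows that $\G$ is rank one convex co-compact. The source of $\g$ is the relative hyperbolicity: since $\G$ is relatively hyperbolic with respect to $\{A_1,\dots,A_m\}$ and is \emph{not} virtually free abelian of rank $\geq 2$, the peripheral structure is non-degenerate and $\G$ is non-elementary, so $\G$ contains a \emph{hyperbolic element} $\g$, i.e.\ an infinite-order element no power of which is conjugate into any $A_i$ (a standard fact about relatively hyperbolic groups). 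The whole proof rests on translating ``hyperbolic element'' into projective dynamics via the isolated-simplices picture.

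First I would record the dictionary. By \cite[Theorem 1.7]{IZ2019} the space $(\convcore,\hil)$ has isolated simplices, and the peripheral subgroups $\{gA_ig^{-1}\}$ correspond, up to finite index, to the simplex stabilizers $\{\Stab_{\G}(S): S\in\Sc_{\max}(\Om)\}$. Consequently $\g$ stabilizes no maximal simplex: if $\g S=S$, then $\langle\g\rangle$ would meet the virtually abelian group $\Stab_{\G}(S)$ in a finite-index subgroup, so some power $\g^N$ would be conjugate into a peripheral $A_i$, contradicting hyperbolicity. Next, since $\G$ acts properly and cocompactly on the geodesic space $\convcore$, the \v{S}varc--Milnor lemma makes the orbit map $\G\to\convcore$ a quasi-isometry; as hyperbolic elements are loxodromic (positive stable translation length in $\G$), this yields $\tau_{\convcore}(\g)>0$. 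Hence, by the $\convcore$-analogues of Observation~\ref{obs:axis} and Proposition~\ref{existence-pseudo-axis} furnished by Remark~\ref{rem:rk-one-cc}, $\g$ has a principal pseudo-axis $(a,b)\subset\overline{\convcore}$ with $a$ attracting and $b$ repelling.

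The crux is a single geometric claim, which I would isolate and apply twice: \textbf{if $(a,b)$ is a $\g$-invariant segment whose endpoints are the attracting/repelling fixed points of $\g$, and if either $[a,b]\subset\partiali\convcore$ or $[a,z]\cup[z,b]\subset\partiali\convcore$ for some $z$, then $\g$ stabilizes a maximal simplex.} In the half-triangle case, Proposition~\ref{prop:facts-iso-simplices}(1) gives maximal simplices $S_1,S_2$ with $[a,z]\subset\partial S_1$ and $[z,b]\subset\partial S_2$; since $z\in\partial S_1\cap\partial S_2$, Proposition~\ref{prop:facts-iso-simplices}(2) forces $S_1=S_2=:S$, so $a\in\partial S$. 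As $\g a=a$ and $\g$ maps maximal simplices to maximal simplices, $a\in\partial S\cap\partial(\g S)$, whence $\g S=S$ again by Proposition~\ref{prop:facts-iso-simplices}(2). The pseudo-axis case is identical, applying Proposition~\ref{prop:facts-iso-simplices}(1) to $[a,b]\subset\partiali\convcore$ to place $a$ on some $\partial S$. Both conclusions contradict the previous paragraph, so: (i) the pseudo-axis $(a,b)$ cannot lie in $\partiali\convcore$, hence meets $\convcore$ and is a genuine axis; and (ii) since every axis of $\g$ is principal (Observation~\ref{obs:axis}(3)), no axis of $\g$ is contained in a half triangle in $\partiali\convcore$. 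Therefore $\g$ is a convex co-compact rank-one isometry and $\G$ is rank one convex co-compact.

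I expect the main obstacle to be the bookkeeping around the standard facts of the first paragraph---precisely, verifying that ``not virtually free abelian of rank $\geq 2$'' is exactly the hypothesis guaranteeing a hyperbolic element (one must rule out a degenerate relatively hyperbolic structure on a virtually abelian group) and that no power of a hyperbolic element is conjugate into a peripheral. The genuinely Hilbert-geometric content, by contrast, is the short simplex-stabilization claim, whose cleanliness hinges entirely on the disjointness statement $\partial S_1\cap\partial S_2=\emptyset$ in Proposition~\ref{prop:facts-iso-simplices}(2).
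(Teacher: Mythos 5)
Your proposal proves the wrong statement. The statement at hand is Proposition \ref{prop:facts-iso-simplices} itself: the two structural facts about a convex core with isolated simplices, namely (1) that every segment $[x,y] \subset \partiali \convcore$ lies in $\partial S$ for some $S \in \Sc_{\max}(\Om)$, and (2) that distinct maximal properly embedded simplices satisfy $\#(S_1 \cap S_2) \leq 1$ and $\partial S_1 \cap \partial S_2 = \emptyset$. In the paper this proposition carries no internal argument at all --- it is quoted from Islam--Zimmer \cite[Theorem 1.8]{IZ2019} --- so a blind proof would have to supply a genuine argument from the definition of isolated simplices, i.e.\ from the closedness and discreteness of $\Sc_{\max}(\Om)$ in the local Hausdorff topology, presumably combined with co-compactness of the $\G$-action on $\convcore$. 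What you wrote is instead a proof of Proposition \ref{prop:iso-sim-implies-rk-1} (relatively hyperbolic convex co-compact groups are rank one convex co-compact), and your central ``simplex-stabilization'' claim invokes Proposition \ref{prop:facts-iso-simplices} twice as a known tool. Read as a proof of the target statement, the proposal is therefore circular: it assumes both parts of the statement, never argues either one, and never uses the isolated-simplices hypothesis in the form in which it would have to enter such a proof.

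For what it is worth, as an argument for Proposition \ref{prop:iso-sim-implies-rk-1} your write-up closely parallels the paper's own proof of that proposition: the paper likewise extracts an infinite-order $\g$ lying in no conjugate of any $A_i$ via \cite[Lemma 2.3]{DAHG2018}, then rules out $[\g^+,\g^-] \subset \partiali \convcore$ and rules out half triangles by exactly the two applications of Proposition \ref{prop:facts-iso-simplices} you describe; the only real difference is that the paper obtains the fixed points $\g^{\pm}$ by applying Brouwer's fixed point theorem to $\overline{E_\g^{\pm} \cap \convcore}$, whereas you invoke the principal pseudo-axis via Observation \ref{obs:axis} and Proposition \ref{existence-pseudo-axis}, and the paper closes the half-triangle case through $\#(\g S \cap S) = \infty$ rather than through $a \in \partial S \cap \partial(\g S)$. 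But that proposition is downstream of the statement you were asked to prove, not a route to it: to fix the attempt you would need an actual proof of (1) and (2), e.g.\ an argument that a boundary segment forces a properly embedded simplex containing it and that two maximal simplices sharing two points, or sharing an ideal boundary point, would violate either maximality or the discreteness of $\Sc_{\max}(\Om)$.
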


Since $\G$ is relatively hyperbolic with respect to $\{ A_1, A_2,\ldots, A_m\}$, \cite[Lemma 2.3]{DAHG2018} implies:
\begin{enumerate}
\item[{\bf (Case 1)}] either $\G$ is virtually $gA_ig^{-1}$ for some $g \in \G$  and $1 \leq i \leq m$,
\item[{\bf (Case 2)}] or there exists $\g \in \G$ such that $\g \not \in \bigcup_{g \in \G}\bigcup_{i=1}^m gA_ig^{-1}=\bigcup_{S \in \smax} \Stab_{\G}(S).$
\end{enumerate}  

In Case 1, $\G$ is a virtually abelian group. So we can now assume that we are in Case 2.

\vspace{0.5em}
\noindent \textbf{Claim: } \emph{If $\g$ is as in Case 2, then $\g$ is a convex co-compact rank one isometry.}
\vspace{0.5em}

From this claim, Proposition \ref{prop:iso-sim-implies-rk-1} is immediate. So all that remains is to prove this claim.

\begin{proof}[Proof of Claim] As $\G$ is a convex co-compact group, $\log \left|\frac{\ev_1}{\ev_{d+1}}(\g)\right|=\tau_{\convcore}(\g)>0$. We first show that $\g$ has an axis in $\convcore$. Let $\Cc^+:=\overline{E_{\g}^+ \cap \convcore}$ and $\Cc^-:=\overline{E_{\g}^- \cap \convcore}$. Then $\Cc^+$ and $\Cc^-$ are disjoint, non-empty, compact, convex, $\g$-invariant subsets of $\Rb^d$. Then the Brouwer fixed point theorem implies the existence of distinct fixed points $\g^{\pm}$ of $\g$ in $\Cc^{\pm}$. If $[\g^+,\g^-] \subset \partiali \convcore$, then Proposition \ref{prop:facts-iso-simplices} implies that there exists $S \in \smax$ such that $[\g^+,\g^-] \subset \partial S$. Then $\partial (\g S) \cap \partial S \supset [\g^+,\g^-]$ and Proposition \ref{prop:facts-iso-simplices} implies that $\g S= S$. Thus, $\g \in \Stab_{\G}(S)$. This contradiction implies that $(\g^+,\g^-) \subset \convcore$ and is an axis of $\g$. 

Suppose $A_\g:=[A_\g^+,A_\g^-]$ is an axis of $\g$ contained in a half triangle in $\convcore$: $[A_{\g}^+,z]\cup[z,A_{\g}^-] \subset  \partiali \convcore$. Then, by Proposition \ref{prop:facts-iso-simplices}, there exist $S^{\pm} \in \smax$ such that $[z,A_\g^{\pm}] \subset \partial S^{\pm}$. Since $z \in \partial S^+ \cap \partial S^-$, Proposition \ref{prop:facts-iso-simplices} implies that $S:=S^+=S^-$ and $A_\g \subset S$. Since $\g$ acts by a translation along $A_\g$, $\g S \cap S \supset A_\g$ which implies $\#(\g S \cap S)=\infty$.  Then by Proposition \ref{prop:facts-iso-simplices}, $\g S= S.$ Thus $\g \in \Stab_{\G}(S)$, a contradiction. Thus $A_{\g}$ is not contained in any half triangle in $\convcore$. This proves the claim.
\end{proof}

\section{Contracting Elements}
\label{appendix:contracting-BF}

Fix a proper geodesic metric space $(X, \dist)$ and a group $G$ that acts properly isometrically on $X$.  If $x \in X$ and $R>0$, let 
$B(x,R):=\{ y \in X : \dist(x,y) < R \}.$ If $\Ac \subset X$ and $x \in X$, let the closest point projection onto $\Ac$ be defined by $\pid_{\Ac}(x):= \{ y \in \Ac : \dist(x,y)=\dist(x,\Ac)\}.$ We let $\Nc_r(\Ac):=\{ y \in X : \dist(y,\Ac)<r\}$ and $\overline{\Nc_r(\Ac)}:=\{ y \in X: \dist(y,\Ac) \leq r\}$) denote the open and the closed $r$-neighborhoods of $\Ac$ respectively. 

In \cite{bestvina_fujiwara_symm_space}, Bestvina-Fujiwara introduced the following notion of contracting subsets. 

\begin{definition}
\label{defn-BF-original} 
A set $\Ac \subset X$ is $B$-contracting  if there exists a constant $B$ such that: if $x \in X$, $R >0$ and $B(x,R) \cap \Ac =\emptyset$, then $\diam \left( \pid _\Ac ( B(x,R) ) \right) \leq B.$
\end{definition}

We will, however, use a related but stronger notion of contracting subsets introduced in \cite{gekhtman_wenyuan_counting}.

\begin{definition}[\cite{gekhtman_wenyuan_counting}]
\label{defn-contracting-BF} 
Fix a geodesic path system $\PS$ on $X$ (cf. \ref{defn-paths}). A set $\Ac \subset X$ is a \emph{contracting subset in the sense of $\BF$} if there exists a constant $C$ such that: if $\sigma \subset X$ is a geodesic in $\PS$ for which $\dist(\sigma,\Ac)>C$, then
\begin{equation*}
\diam \left( \pid _\Ac (\sigma) \right) \leq C.
\end{equation*}
Suppose $G$ preserves $\PS$. Then $g \in G$ is a \emph{contracting element in the sense of $\BF$} if for any $x_0 \in X$:
\begin{enumerate}
\item $g$ has infinite order and $\langle g \rangle \cdot  x_0$ is a quasi-isometric embedding of $\Zb$ in $X$, and
\item $\langle g \rangle  \cdot  x_0$ is a contracting subset in the sense of $\BF$.
\end{enumerate}
\end{definition}

\begin{remark}
If $X$ is a proper CAT(0) geodesic metric space, \cite[Corollary 3.4]{bestvina_fujiwara_symm_space} prove that the definitions \ref{defn-BF-original} and \ref{defn-contracting-BF} are equivalent. In fact, they prove this equivalence for any metric space that satisfies their axioms DD and FT, see \cite{bestvina_fujiwara_symm_space}. However, it is unclear whether these two definitions \ref{defn-BF-original} and \ref{defn-contracting-BF} are equivalent in complete generality. We will discuss this in \cref{prop:comparison_B1_and_B2} below. \cref{prop:comparison_B1_and_B2} suggests that it is unlikely that these definitions are equivalent in general. 
\end{remark}

We will now prove \cref{prop:notion-of-contraction-equiv}: contraction in the sense of $\BF$ is equivalent to Sisto's notion (cf. \ref{defn-contraction-sisto} and \ref{defn-contracting-element-sisto}), in the context of a geodesic path system. Before starting the proof, we  record the following immediate consequence of \cref{defn-contracting-BF}.

\begin{lemma}
\label{lem-contraction-BF-obs1}
Suppose $A\subset X$ is \emph{contracting in the sense of $\BF$} with constant $C$. Let $\sigma_{x,x'} \in \PS$ be a geodesic joining $x$ and $x'$ such that $\sigma_{x,x'} \cap \overline{\Nc_{2C}(A)}=\{x'\}$. Then $\left( \sup_{a \in \pid_{A}(x)}\dist(a, x') \right) \leq 3C.$
\end{lemma}
\begin{proof}
Since $\dist(\sigma_{x,x'},A)\geq 2C>C$,  $\dist(y,y')\leq C$ for any $y \in \pid_{A}(x)$ and any $y' \in \pid_A(x')$. But $\dist(a',x') \leq 2C$ for any $a' \in \pid_A(x')$. Hence the conclusion. 
\end{proof}

\begin{proposition}[\cref{prop:notion-of-contraction-equiv}]
\label{prop:equiv_sisto_and_BF}
Suppose $(X,\PS)$ is a geodesic path system. Then: 
\begin{enumerate}
\item $\Ac \subset X$ is $\PS$-contracting if and only if $\Ac$ is contracting in the sense of $\BF$. 
\item If $G$ preserves $\PS$, then $g \in G$ is a contracting element for $(X,\PS)$ if and only if $g \in G$ is a contracting element in the sense of $\BF$.
\end{enumerate}
\end{proposition}
\begin{proof}
It suffices to prove only part (1) as (2) then follows from definitions. We will use $\sigma_{p,q}$ to denote a geodesic path in $\PS$  joining $p$ and $q$. We now start the proof of (1).

 $(\implies)$ Suppose $\Ac$ is contracting in the sense of BF. Define a projection map $\pi:X \to \Ac$ by choosing $\pi(x) \in \pid_{\Ac}(x)$ for each $x \in X$. We remark that such a map $\pi$ is coarsely unique, i.e. any other such map $\pi'$ has the property that $\dist(\pi(x),\pi'(x)) \leq 2C$ for any $x \in X$. Indeed, for any $x$ such that $\dist(x,\Ac)>C$, $\diam(\pid_{\Ac}(x)) \leq C$ as $\{x\}$ is a geodesic in $\PS$. On the other hand, if $\dist(x,\Ac)\leq C$, then $\sup_{a \in \pid_{\Ac}(x)}\dist(x,a) \leq C$ and hence $\diam(\pid_{\Ac}(x)) \leq 2C$. Hence the remark.

We now show that $\pi$ satisfies \cref{defn-contraction-sisto} with constant $3C$.  Clearly, if $x \in \Ac$, then $\dist(x,\pi(x))=0$. Now suppose that $x,y \in X$ is such that $\dist(\pi(x),\pi(y)) \geq 3C$. Then $\diam(\pid_{\Ac}(\sigma_{x,y})) \geq 3C > C$. Then $d(\sigma_{x,y},\Ac) \leq C$ and thus $\sigma_{x,y} \cap \overline{\Nc_{2C}(\Ac)} \neq \emptyset$. Let $x'$ be the first point along $\sigma_{x,y}$ that intersects $\overline{\Nc_{2C}(\Ac)}$ (assume that $\sigma_{x,y}$ is continuously parametrized in the direction from $x$ to $y$). If $x=x'$, then $\dist(\pi(x),x') \leq 2C$. Otherwise apply Lemma \ref{lem-contraction-BF-obs1} to $\sigma_{x,x'} \subset \sigma_{x,y}$ to see that $\dist(\pi(x),x') \leq 3C$. Similarly, if $y'$ is the last point along $\sigma_{x,y}$ where $\sigma_{x,y}$ intersects $\overline{\Nc_{2C}(A)}$, then $\dist(y',\pi(y)) \leq 3C$. Thus $\pi$ is a contracting projection with constant $3C$.

$(\impliedby)$ Suppose $\pi:X \to \Ac$ is a contracting projection with constant $C$. By Lemma \ref{lem:coarse-equiv-of-proj} $\left( \sup_{a \in \pid_{\Ac}(x)}\dist(a,\pi(x))  \right)\leq 2C$ for any $x \in X$. Let $\sigma_{x,y} \in \PS$ be such that $\dist(\sigma_{x,y},\Ac)>5C$. If possible, let there exist $a_1 \in \pid_{\Ac}(x)$ and $b_1\in \pid_{\Ac}(y)$ such that $\dist(a_1,b_1)>5C$. Then $$\dist(\pi(x),\pi(y)) \geq \dist(a_1,b_1)-\dist(a_1,\pi(x))-\dist(b_1,\pi(y)) > C.$$ Then, $\sigma_{x,y}$ must intersect  $\Nc_C(\Ac)$, a contradiction.  
\end{proof}

\subsection{Comparison between Definitions \ref{defn-BF-original} and \ref{defn-contracting-BF}}

To discuss the relationship between \ref{defn-BF-original} and \ref{defn-contracting-BF} for a general metric space, we need the following condition $(\blacklozenge)$. We will say that \emph{$A \subset X$ satisfies $(\blacklozenge)$ if there exists a  constant $C$ such that: for any $x \in X$, $z \in A$ and $a \in \pid_{A}(x)$}, 
\begin{equation*}
\dist(x,z) \geq \dist(x,a) + \dist(a,z)-C.
\end{equation*}
We will now show that:
\begin{proposition}
\label{prop:comparison_B1_and_B2}
Fix a proper geodesic metric space $X$ and a geodesic path system $\PS$ on $X$. Then $A \subset X$ is contracting in the sense of $\BF$ if and only if it satisfies $(\blacklozenge)$ and \cref{defn-BF-original}.
\end{proposition}
The implication $(\implies)$ follows from \cite[Lemma 2.10]{S2013}. Note that in  \cite{S2013}, condition $(\blacklozenge)$ is called (AP1) while \ref{defn-BF-original} is  called (AP2). This direction is then immediate from \cite[Lemma 2.10]{S2013}. The proof of the converse $(\impliedby)$ follows from the next two lemmas. For $p,q \in X$, we will denote by $\sigma_{p,q}$ a geodesic in $\PS$ joining $p$ and $q$.

\begin{lemma}
Suppose $A \subset X$ satisfies \cref{defn-contracting-BF}. Then $A$ satisfies $(\blacklozenge)$. 
\end{lemma}
\begin{proof}
Fix any $x \in X$, $z \in A$ and $a \in \pid_A(x)$. It suffices to only consider the case when $\dist(x,A)>2C$. Suppose $x' \in \sigma_{x,z}$ be the first point along $\sigma_{x,z}$ that intersects $\overline{\Nc_{2C}(A)}$ (assume that $\sigma_{x,z}$ is continuously parametrized in the direction from $x$ to $z$). By \cref{lem-contraction-BF-obs1}, $\dist(x',a) \leq 3C$. Then $\dist(x,a)-\dist(x,x') \leq \dist(x',a) \leq 3C$ and $\dist(z,a)-\dist(z,x') \leq \dist(x',a) \leq 3C$. Since $x' \in \sigma_{x,z}$, $\dist(x,z)=\dist(x,x')+\dist(x',z)$. Thus
\begin{equation*}
\dist(x,z)-\dist(x,a)-\dist(a,z)=(\dist(x,x')-\dist(x,a)) + (\dist(x',z)-\dist(a,z)) \geq -6C.\qedhere 
\end{equation*}
\end{proof}

\begin{lemma}
Suppose $\Ac \subset X$ satisfies \cref{defn-contracting-BF}. Then $\Ac$ also satisfies \cref{defn-BF-original}. 
\end{lemma}
\begin{proof}
 \cref{prop:equiv_sisto_and_BF} implies that $\Ac$ is $\PS$-contracting. Then there exists a projection map $\pi_{\Ac}:X \to \Ac$ with constant $C$ satisfying \cref{defn-contraction-sisto}. Suppose $x \in X$ and $0< R < d(x,\Ac).$ We claim that $\diam(\pid_{\Ac}(B(x,R)))\leq 20C.$ By Lemma \ref{lem:coarse-equiv-of-proj}, it suffices to prove that $\dist(\pi_{\Ac}(x),\pi_{\Ac}(y)) \leq 8C$ for any $y \in B(x,R)$. 
 
 Fix $y \in B(x,R)$ and let $\sigma_{x,y} \in \PS$. Without loss of generality, we can assume that $\dist(\pi_{\Ac}(x),\pi_{\Ac}(y)) \geq C$. Then there exists $x_1 \in \sigma_{x,y}$ such that $\dist(x_1,\pi_{\Ac}(x)) \leq C$. Then, $$|\dist(x,x_1)-\dist(x,\Ac)| \leq \dist(x_1,\pi_{\Ac}(x)) + \sup_{a \in \pid_{\Ac}(x)} \dist(a,\pi_{\Ac}(x)) \leq 3C.$$ Thus, $\dist(y,x_1) =\dist(y,x)-\dist(x,x_1) \leq \dist(y,x)-\dist(x,\Ac)+3C.$ As $\dist(y,x)<\dist(x,\Ac)$, $\dist(y,x_1) \leq 3C$. Then   
 \begin{align*}
&\dist(y,\pi_\Ac(y)) \leq \dist(y,\pi_{\Ac}(x)) \leq \dist(y,x_1)+\dist(x_1,\pi_\Ac(x)) \leq 4C, \text{ which implies that }\\
&\dist(\pi_\Ac(y),\pi_\Ac(x)) \leq \dist(\pi_\Ac(y),y) + \dist(y,x_1) + \dist(x_1,\pi_\Ac(x))  \leq 8C. \qedhere
\end{align*}
\end{proof}

\bibliographystyle{alpha}
\bibliography{list.bib}

\begin{thebibliography}{GGKW17}

\bibitem[Bal82]{ballmann_axial_isometries}
Werner Ballmann.
\newblock Axial isometries of manifolds of nonpositive curvature.
\newblock {\em Math. Ann.}, 259(1):131--144, 1982.

\bibitem[Bal85]{WB1985}
Werner Ballmann.
\newblock Nonpositively curved manifolds of higher rank.
\newblock {\em Annals of Mathematics}, 122(3):597--609, 1985.

\bibitem[Bal95]{ballmann_book}
Werner Ballmann.
\newblock {\em Lectures on spaces of nonpositive curvature}, volume~25 of {\em
  DMV Seminar}.
\newblock Birkh\"{a}user Verlag, Basel, 1995.
\newblock With an appendix by Misha Brin.

\bibitem[BB95]{ballmann_orbihedra}
Werner Ballmann and Michael Brin.
\newblock Orbihedra of nonpositive curvature.
\newblock {\em Publications Math{\'e}matiques de l'IH{\'E}S}, 82:169--209,
  1995.

\bibitem[BBF15]{BBF15}
Mladen Bestvina, Ken Bromberg, and Koji Fujiwara.
\newblock Constructing group actions on quasi-trees and applications to mapping
  class groups.
\newblock {\em Publ. Math. Inst. Hautes \'{E}tudes Sci.}, 122:1--64, 2015.

\bibitem[BBF16]{BBF13}
Mladen Bestvina, Ken Bromberg, and Koji Fujiwara.
\newblock Bounded cohomology with coefficients in uniformly convex {B}anach
  spaces.
\newblock {\em Comment. Math. Helv.}, 91(2):203--218, 2016.

\bibitem[BBFS19]{BBFS2019}
Mladen Bestvina, Ken Bromberg, Koji Fujiwara, and Alessandro Sisto.
\newblock Acylindrical actions on projection complexes.
\newblock {\em Enseign. Math.}, 65(1-2):1--32, 2019.

\bibitem[BDL18]{BDL2018}
Samuel~A. Ballas, Jeffrey Danciger, and Gye-Seon Lee.
\newblock Convex projective structures on nonhyperbolic three-manifolds.
\newblock {\em Geom. Topol.}, 22(3):1593--1646, 2018.

\bibitem[Ben97]{B1997}
Y.~Benoist.
\newblock Propri\'{e}t\'{e}s asymptotiques des groupes lin\'{e}aires.
\newblock {\em Geom. Funct. Anal.}, 7(1):1--47, 1997.

\bibitem[Ben00]{benoist_auto_convex_cone}
Yves Benoist.
\newblock Automorphismes des c\^{o}nes convexes.
\newblock {\em Invent. Math.}, 141(1):149--193, 2000.

\bibitem[Ben04]{benoist_cd1}
Yves Benoist.
\newblock Convexes divisibles. {I}.
\newblock In {\em Algebraic groups and arithmetic}, pages 339--374. Tata Inst.
  Fund. Res., Mumbai, 2004.

\bibitem[Ben05]{benoist_cd3}
Yves Benoist.
\newblock Convexes divisibles. {III}.
\newblock {\em Ann. Sci. \'{E}cole Norm. Sup. (4)}, 38(5):793--832, 2005.

\bibitem[Ben06a]{benoist_cd4}
Yves Benoist.
\newblock Convexes divisibles. {IV}. {S}tructure du bord en dimension 3.
\newblock {\em Invent. Math.}, 164(2):249--278, 2006.

\bibitem[Ben06b]{B2006}
Yves Benoist.
\newblock Convexes hyperboliques et quasiisom\'{e}tries.
\newblock {\em Geom. Dedicata}, 122:109--134, 2006.

\bibitem[Ben08]{benoist_cd_survey}
Yves Benoist.
\newblock A survey on divisible convex sets.
\newblock In {\em Geometry, analysis and topology of discrete groups}, volume~6
  of {\em Adv. Lect. Math. (ALM)}, pages 1--18. Int. Press, Somerville, MA,
  2008.

\bibitem[BF09]{bestvina_fujiwara_symm_space}
Mladen Bestvina and Koji Fujiwara.
\newblock A characterization of higher rank symmetric spaces via bounded
  cohomology.
\newblock {\em Geometric and Functional Analysis}, 19(1):11--40, 2009.

\bibitem[Bla21a]{PLB2021-boundary}
Pierre-Louis Blayac.
\newblock The boundary of rank-one divisible convex sets, 2021.

\bibitem[{Bla}21b]{PLB2021}
Pierre-Louis {Blayac}.
\newblock {Patterson--Sullivan densities in convex projective geometry}.
\newblock {\em arXiv e-prints}, page arXiv:2106.08089, June 2021.

\bibitem[BM02]{BM2002}
M.~Burger and N.~Monod.
\newblock Continuous bounded cohomology and applications to rigidity theory.
\newblock {\em Geom. Funct. Anal.}, 12(2):219--280, 2002.

\bibitem[BQ16]{BQ2016}
Yves Benoist and Jean-Fran\c{c}ois Quint.
\newblock {\em Random walks on reductive groups}, volume~62.
\newblock Springer, Cham, 2016.

\bibitem[BS87]{burns_spatzier_rank_rigid}
Keith Burns and Ralf Spatzier.
\newblock Manifolds of nonpositive curvature and their buildings.
\newblock {\em Inst. Hautes \'{E}tudes Sci. Publ. Math.}, (65):35--59, 1987.

\bibitem[BV23]{BV2023}
Pierre-Louis {Blayac} and Gabriele {Viaggi}.
\newblock {Divisible convex sets with properly embedded cones}.
\newblock {\em arXiv e-prints}, page arXiv:2302.07177, February 2023.

\bibitem[CLM16]{CLM2016}
Suhyoung {Choi}, Gye-Seon {Lee}, and Ludovic {Marquis}.
\newblock {Convex projective generalized Dehn filling}.
\newblock {\em arXiv e-prints}, page arXiv:1611.02505, Nov 2016.

\bibitem[CLT15]{cooper_long}
Daryl Cooper, DD~Long, and Stephan Tillmann.
\newblock On convex projective manifolds and cusps.
\newblock {\em Advances in Mathematics}, 277:181--251, 2015.

\bibitem[CM14]{CM2014}
Micka\"{e}l Crampon and Ludovic Marquis.
\newblock Finitude g\'{e}om\'{e}trique en g\'{e}om\'{e}trie de {H}ilbert.
\newblock {\em Ann. Inst. Fourier (Grenoble)}, 64(6):2299--2377, 2014.

\bibitem[Cor17]{CORDES2017}
Matthew Cordes.
\newblock Morse boundaries of proper geodesic metric spaces.
\newblock {\em Groups Geom. Dyn.}, 11(4):1281--1306, 2017.

\bibitem[CS11]{SAGEEV_CAPRACE_2011}
Pierre-Emmanuel Caprace and Michah Sageev.
\newblock Rank rigidity for {CAT}(0) cube complexes.
\newblock {\em Geom. Funct. Anal.}, 21(4):851--891, 2011.

\bibitem[DG18]{DAHG2018}
Fran\c{c}ois Dahmani and Vincent Guirardel.
\newblock Recognizing a relatively hyperbolic group by its {D}ehn fillings.
\newblock {\em Duke Math. J.}, 167(12):2189--2241, 2018.

\bibitem[DGK17]{DGK2017}
Jeffrey {Danciger}, Fran{\c{c}}ois {Gu{\'e}ritaud}, and Fanny {Kassel}.
\newblock {Convex cocompact actions in real projective geometry}.
\newblock {\em arXiv e-prints}, page arXiv:1704.08711, Apr 2017.

\bibitem[DGO17]{DGO2017}
F.~Dahmani, V.~Guirardel, and D.~Osin.
\newblock Hyperbolically embedded subgroups and rotating families in groups
  acting on hyperbolic spaces.
\newblock {\em Mem. Amer. Math. Soc.}, 245(1156):v+152, 2017.

\bibitem[dlH93]{harpe_hilbert_metric}
Pierre de~la Harpe.
\newblock On {H}ilbert's metric for simplices.
\newblock In {\em Geometric group theory, {V}ol. 1 ({S}ussex, 1991)}, volume
  181 of {\em London Math. Soc. Lecture Note Ser.}, pages 97--119. Cambridge
  Univ. Press, Cambridge, 1993.

\bibitem[Fri17]{bounded_cohom_book}
Roberto Frigerio.
\newblock {\em Bounded cohomology of discrete groups}, volume 227 of {\em
  Mathematical Surveys and Monographs}.
\newblock American Mathematical Society, Providence, RI, 2017.

\bibitem[GGKW17]{GGKW2017}
Fran\c{c}ois Gu\'{e}ritaud, Olivier Guichard, Fanny Kassel, and Anna Wienhard.
\newblock Anosov representations and proper actions.
\newblock {\em Geom. Topol.}, 21(1):485--584, 2017.

\bibitem[Gro87]{MG1987}
M.~Gromov.
\newblock Hyperbolic groups.
\newblock In {\em Essays in group theory}, volume~8 of {\em Math. Sci. Res.
  Inst. Publ.}, pages 75--263. Springer, New York, 1987.

\bibitem[Gui90]{guivarch1990}
Yves Guivarc'h.
\newblock Produits de matrices aléatoires et applications aux propriétés
  géometriques des sous-groupes du groupe linéaire.
\newblock {\em Ergodic Theory and Dynamical Systems}, 10(3):483–512, 1990.

\bibitem[GY18]{gekhtman_wenyuan_counting}
Ilya Gekhtman and Wen-yuan Yang.
\newblock Counting conjugacy classes in groups with contracting elements.
\newblock {\em arXiv preprint arXiv:1810.02969}, 2018.

\bibitem[Isl21]{MI2021}
Mitul Islam.
\newblock {\em Rank One Phenomena in Convex Projective Geometry}.
\newblock PhD thesis, University of Michigan, 2021.

\bibitem[IZ21]{IZ_flat_torus}
Mitul Islam and Andrew Zimmer.
\newblock A flat torus theorem for convex co-compact actions of projective
  linear groups.
\newblock {\em Journal of the London Mathematical Society}, 103(2):470--489,
  2021.

\bibitem[IZ23]{IZ2019}
Mitul Islam and Andrew Zimmer.
\newblock Convex cocompact actions of relatively hyperbolic groups.
\newblock {\em Geom. Topol.}, 27(2):417--511, 2023.

\bibitem[Kap07]{kapovich2007}
Michael Kapovich.
\newblock Convex projective structures on gromov–thurston manifolds.
\newblock {\em Geom. Topol.}, 11(3):1777--1830, 2007.

\bibitem[KS58]{KS1958}
Paul Kelly and Ernst Straus.
\newblock Curvature in {H}ilbert geometries.
\newblock {\em Pacific J. Math.}, 8:119--125, 1958.

\bibitem[Mar91]{M1991}
G.~A. Margulis.
\newblock {\em Discrete subgroups of semisimple {L}ie groups}, volume~17 of
  {\em Ergebnisse der Mathematik und ihrer Grenzgebiete (3) [Results in
  Mathematics and Related Areas (3)]}.
\newblock Springer-Verlag, Berlin, 1991.

\bibitem[Mar13]{marquis_survey}
Ludovic Marquis.
\newblock Around groups in hilbert geometry.
\newblock {\em arXiv preprint arXiv:1303.7099}, 2013.

\bibitem[Nus88]{N1988}
Roger~D. Nussbaum.
\newblock Hilbert's projective metric and iterated nonlinear maps.
\newblock {\em Mem. Amer. Math. Soc.}, 75(391):iv+137, 1988.

\bibitem[Osi16]{osin_acy_hyp}
Denis Osin.
\newblock Acylindrically hyperbolic groups.
\newblock {\em Transactions of the American Mathematical Society},
  368(2):851--888, 2016.

\bibitem[Pin20]{SAM2019}
Samantha Pinella.
\newblock {\em Hilbert domains, Conics, and Rigidity}.
\newblock PhD thesis, University of Michigan, 2020.

\bibitem[Ric19]{Ricks_2019}
Russell Ricks.
\newblock A rank rigidity result for cat(0) spaces with one-dimensional tits
  boundaries.
\newblock {\em Forum Mathematicum}, 31(5):1317–1330, Sep 2019.

\bibitem[Sis13]{S2013}
Alessandro Sisto.
\newblock Projections and relative hyperbolicity.
\newblock {\em Enseign. Math. (2)}, 59(1-2):165--181, 2013.

\bibitem[Sis18]{sisto_contracting_rw}
Alessandro Sisto.
\newblock Contracting elements and random walks.
\newblock {\em J. Reine Angew. Math.}, 742:79--114, 2018.

\bibitem[Zim23]{Z2019}
Andrew Zimmer.
\newblock A higher-rank rigidity theorem for convex real projective manifolds.
\newblock {\em Geom. Topol.}, 27(7):2899--2936, 2023.

\end{thebibliography}

\end{document}